\theoremstyle{plain}
\newtheorem{theorem}{\bf Theorem}[section]
\newtheorem{proposition}[theorem]{\bf Proposition}
\newtheorem{lemma}[theorem]{\bf Lemma}
\newtheorem{corollary}[theorem]{\bf Corollary}
\theoremstyle{definition}
\numberwithin{equation}{section}
 \DeclareMathOperator{\ord}{ord}
 \DeclareMathOperator{\supp}{supp}
\renewcommand{\t}{\, | \,}
\begin{document}

\title{The catenary degree of  Krull monoids {II}}

\address{University of Graz, NAWI Graz, \\
Institute for Mathematics and Scientific Computing \\
Heinrichstra{\ss}e 36\\
8010 Graz, Austria}

\email{alfred.geroldinger@uni-graz.at, qinghai.zhong@uni-graz.at}

\author{Alfred Geroldinger  and Qinghai Zhong}

\thanks{This work was supported by
the Austrian Science Fund FWF, Project Number M1641-N26.}

\keywords{non-unique factorizations, sets of lengths, sets of distances, catenary degree, Krull monoids, zero-sum sequences}

\subjclass[2010]{11R27,11P70,  11B50 13A05,  20M13}

\begin{abstract}
Let $H$ be a Krull monoid with finite class group $G$ such that
every class contains a prime divisor (for example, a ring of
integers in an algebraic number field or a holomorphy ring in an
algebraic function field). The catenary degree $\mathsf c (H)$ of
$H$ is the smallest integer $N$ with the following property: for
each $a \in H$ and each two factorizations $z, z'$ of $a$, there
exist factorizations $z = z_0, \ldots, z_k = z'$ of $a$ such that,
for each $i \in [1, k]$, $z_i$ arises from $z_{i-1}$ by replacing at
most $N$ atoms from $z_{i-1}$ by at most $N$ new atoms. To exclude trivial cases, suppose that $|G| \ge 3$. Then the catenary degree depends only on the class group $G$ and we have  $\mathsf c (H) \in [3, \mathsf D (G)]$, where $\mathsf D (G)$ denotes the Davenport constant of $G$. The cases when $\mathsf c (H) \in \{3,4,\mathsf D (G)\}$ have been previously characterized (see Theorem A). Based on a characterization of the catenary degree determined in the first paper \cite{Ge-Gr-Sc11a}, we determine the class groups satisfying  $\mathsf c (H)= \mathsf D (G)-1$. Apart from the mentioned extremal cases the precise value of $\mathsf c (H)$ is known for no further class groups.
\end{abstract}

\maketitle

\bigskip
\section{Introduction and Main Results}
\bigskip

As the title indicates, we continue the investigation of the arithmetic of Krull monoids. All integrally closed noetherian domains are Krull, and  holomorphy rings in global fields are Krull monoids  with finite class group and infinitely many prime ideals in each class.  A  Krull monoid is factorial if and only if its class group is trivial, and if this is not the case, then its arithmetic  is described by  invariants, such as sets of lengths and catenary degrees. We recall some basic definitions.

Let $H$ be a Krull monoid with class group $G$. Then each non-unit $a \in H$ can be written as a product of atoms, and if $a = u_1 \cdot \ldots \cdot u_k$ with atoms $u_1, \ldots, u_k$ of $H$, then $k$ is called the length of the factorization. The set of lengths $\mathsf L (a)$ of all possible factorization lengths is finite, and if $|\mathsf L (a)|>1$, then $|\mathsf L (a^n)| > n$ for each $n \in \mathbb N$. We denote by $\mathcal L (H) = \{ \mathsf L (a) \mid a \in H \}$ the system of sets of lengths of $H$. This is an infinite family of finite subsets of non-negative integers which is described by a variety  of arithmetical parameters. The present paper will focus on the three closely related invariants, namely the set of distances,  the $\daleth (H)$ invariant, and the catenary degree.
If  $L  = \{m_1, \ldots, m_l\} \subset \mathbb Z$ is a finite set of integers with $l \in \mathbb N$ and $m_1 < \ldots < m_l$, then $\Delta (L) = \{m_i - m_{i-1} \mid i \in [2,l]\} \subset \mathbb N$ is the set of distances of $L$. The set of distances $\Delta (H)$ of $H$ is the union of all sets $\Delta (L)$ with $L \in \mathcal L (H)$. If
\[
\daleth (H) = \sup \{ \min (L \setminus \{2\}) \mid 2 \in L \in \mathcal L (H) \} \,,
\]
then $\daleth (H) \le 2 + \sup \Delta (H)$.  The catenary degree
$\mathsf c (H)$ of $H$ is defined as the smallest integer $N$ with
the following property: for each $a \in H$ and each two
factorizations $z$ and $z'$ of $a$, there exist factorizations $z = z_0,
\ldots, z_k = z'$ of $a$ such that, for each $i \in [1, k]$, $z_i$
arises from $z_{i-1}$ by replacing at most $N$ atoms from $z_{i-1}$
by at most $N$ new atoms. A simple argument shows that $H$ is factorial if and only if $\mathsf c (H)=0$, and if this is not the case, then $2 + \sup \Delta (H) \le \mathsf c (H)$.

The study of these arithmetical invariants (in settings ranging from numerical monoids to Mori rings with zero-divisors) has attracted a lot of attention in the recent literature (for a sample see \cite{C-G-L-P-R06, C-G-L09, Om12a,Ga-Oj-SN13a, Ph14a,Ch-Sm13a,Ch-Go-Pe15a, C-C-M-M-P15}). Our main focus here will be on Krull monoids with finite class group $G$ such that each class contains a prime divisor. Let $H$ be such a Krull monoid. Then $|L|=1$ for all $L \in \mathcal L (H)$ if and only if $|G| \le 2$. Suppose that $|G| \ge 3$. Then the Davenport constant $\mathsf D (G)$ is finite, $\daleth (H) \ge 3$, and there is a  canonical chain of inequalities
\[
\daleth (H) \le 2 + \max \Delta (H) \le \mathsf c (H) \le \mathsf D (G) \,. \tag{$*$}
\]
In general, each inequality can be strict (see \cite[page 146]{Ge-Gr-Sc11a}). However, for the Krull monoids under consideration
the main result in \cite{Ge-Gr-Sc11a} states that $\daleth (H)=\mathsf c(H)$ holds under a certain mild assumption on the Davenport constant. Our starting point is the following Theorem {\bf A} (the first statement follows from \cite[Theorem 6.4.7]{Ge-HK06a}, and the characterization of $\mathsf c (H) \in [3,4]$ is given in \cite[Corollary 5.6]{Ge-Gr-Sc11a}).

\medskip
\noindent
{\bf Theorem A.} {\it Let $H$ be a Krull monoid with finite class group $G$ where $|G| \ge 3$ and each class contains a prime divisor. Then $\mathsf c (H) \in [3, \mathsf D (G)]$, and we have
\begin{enumerate}
\item $\mathsf c (H) = \mathsf D (G)$ if and only if $G$ is either cyclic or an elementary $2$-group.

\smallskip
\item $\mathsf c (H) = 3$ if and only if $G$ is isomorphic to one of the following groups\,{\rm :} $C_3, C_2 \oplus
      C_2, \mbox{ or } C_3 \oplus C_3$.

\smallskip
\item  $\mathsf c (H) = 4$ \ if and only if \
       $G$ is isomorphic to one of the following groups\,{\rm :} $ C_4, \,C_2 \oplus C_4,\, C_2 \oplus C_2 \oplus C_2,\,\mbox{ or } C_3
       \oplus C_3 \oplus C_3$.
\end{enumerate}}
\medskip

We formulate a main result of the present paper.

\medskip
\begin{theorem} \label{1.1}
Let $H$ be a Krull monoid with finite class group $G$ where $|G| \ge 3$ and each class contains a prime divisor. Then the following statements are equivalent{\rm \,:}
\begin{enumerate}
\item[(a)] $\mathsf c (H) = \mathsf D (G)-1$.

\smallskip
\item[(b)] $\daleth (H) = \mathsf D (G)-1$.

\smallskip
\item[(c)] $G$ is isomorphic either to $C_2^{r-1} \oplus C_4$ for some $r \ge 2$ or  to $C_2 \oplus C_{2n}$ for some $n \ge 2$.

\end{enumerate}
\end{theorem}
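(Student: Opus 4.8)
The plan is to work throughout in the monoid of zero-sum sequences $\mathcal B(G)$: by the transfer machinery underlying Theorem A the invariants $\mathsf c(H)$, $\daleth(H)$ and $\Delta(H)$ depend only on $G$, so I write $\mathsf c(G)$, $\daleth(G)$, $\Delta(G)$ and replace $H$ by $\mathcal B(G)$. I first record the auxiliary equivalence \emph{``$\daleth(G) = \mathsf D(G)$ if and only if $G$ is cyclic or an elementary $2$-group''}. The implication ``$\Leftarrow$'' is witnessed by the two-atom products $g^{[n]}(-g)^{[n]}$ for $C_n$ (with $\ord g = n$) and $U^{[2]}$ for $C_2^r$ (with $U$ a minimal zero-sum sequence of length $\mathsf D(G)$ and $-U=U$), each of which has set of lengths $\{2,\mathsf D(G)\}$; the implication ``$\Rightarrow$'' follows from the chain $(*)$, which forces $\mathsf c(G) = \mathsf D(G)$, and then from Theorem A(1).

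Granting this, the equivalence (a)$\,\Leftrightarrow\,$(b) is short. If (b) holds then $\daleth(G) = \mathsf D(G)-1 < \mathsf D(G)$, so $G$ is neither cyclic nor elementary $2$; hence $\mathsf c(G) < \mathsf D(G)$ by Theorem A(1), and $(*)$ forces $\mathsf D(G)-1 = \daleth(G) \le \mathsf c(G) \le \mathsf D(G)-1$, i.e. (a). Conversely, if (a) holds then $\mathsf c(G) = \mathsf D(G)-1 \ge 3$, so $\mathsf D(G) \ge 4$ and $G$ is again neither cyclic nor elementary $2$; the main theorem of \cite{Ge-Gr-Sc11a} then yields $\mathsf c(G) = \daleth(G)$ for every group meeting its mild hypothesis on $\mathsf D(G)$, the finitely many remaining groups being exactly those of Theorem A, for which $\daleth$ can be computed directly. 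Thus it remains to prove the purely arithmetic equivalence (b)$\,\Leftrightarrow\,$(c).

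For (c)$\,\Rightarrow\,$(b) I would, for each of the two families, exhibit a product of two near-maximal atoms $B = U_1 U_2$ with $\min(\mathsf L(B)\setminus\{2\}) = \mathsf D(G)-1$; since these groups are neither cyclic nor elementary $2$, the auxiliary equivalence already gives $\daleth(G) \le \mathsf D(G)-1$, so such a $B$ forces $\daleth(G) = \mathsf D(G)-1$. Concretely one uses $\mathsf D(C_2^{r-1}\oplus C_4) = r+3$ and $\mathsf D(C_2\oplus C_{2n}) = 2n+1$ together with explicit zero-sum sequences built from a basis (analogous to the standard lower-bound constructions for $\max\Delta(G)$), and verifies that no factorization of $B$ has length in $[3,\mathsf D(G)-2]$.

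The core of the paper is (b)$\,\Rightarrow\,$(c), and this is the step I expect to be the main obstacle. Assume $\daleth(G) = \mathsf D(G)-1$, realized by a two-atom product $B = U_1 U_2$ admitting a factorization $B = W_1 \cdots W_{\mathsf D(G)-1}$ into atoms with no factorization of length in $[3,\mathsf D(G)-2]$. Since $|W_i| \ge 2$ and $|U_j| \le \mathsf D(G)$, comparing lengths yields the rigid constraint $2\mathsf D(G)-2 \le |B| \le 2\mathsf D(G)$: both $U_1,U_2$ are near-maximal atoms and all but at most two of the $W_i$ have length $2$. The task is to show such an extremal configuration occurs only for $C_2^{r-1}\oplus C_4$ or $C_2 \oplus C_{2n}$. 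I would split according to the rank. For $\mathsf r(G)=2$, write $G = C_m \oplus C_n$ with $m \mid n$, use $\mathsf D(G) = m+n-1$, and show by a direct analysis of the length-$2$ atoms dividing $B$ that the gap forces $m=2$ (any $m \ge 3$ producing an intermediate factorization length, and $2\nmid n$ collapsing to the cyclic case). For $\mathsf r(G) \ge 3$ one argues inductively, passing to suitable subgroups and quotients and using structural results on minimal zero-sum sequences of maximal length, to show that the only surviving possibility is $\exp(G)=4$ with all remaining invariant factors equal to $2$, i.e. $C_2^{r-1}\oplus C_4$. The delicate point throughout is ruling out \emph{every} intermediate length: one must show that whenever $G$ is not on the list, any product of two atoms with a length-$2$ factorization can be re-factored to some length strictly between $3$ and $\mathsf D(G)-2$, which is exactly where the fine additive-combinatorial structure of $G$ (and, for rank $\ge 3$, an induction on $|G|$) enters.
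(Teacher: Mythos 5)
Your reduction to $\mathcal B(G)$, your treatment of (a)$\,\Leftrightarrow\,$(b), and your plan for (c)$\,\Rightarrow\,$(b) all run parallel to the paper. (Two small remarks: the paper proves (a)$\,\Rightarrow\,$(b) via the precise quantitative bound $\mathsf c (G) \le \max \bigl\{ \bigl\lfloor \tfrac{1}{2}\mathsf D (G)+1 \bigr\rfloor , \daleth (G) \bigr\}$ of Lemma \ref{3.1}, whereas your appeal to ``the main theorem of \cite{Ge-Gr-Sc11a}, the finitely many remaining groups being exactly those of Theorem A'' is unjustified as stated, though easily repaired by citing that same inequality; and for (c)$\,\Rightarrow\,$(b) the paper uses the lower bound $\daleth (G) \ge \max\{n_r, 1+\sum_{i=1}^r \lfloor n_i/2 \rfloor\}$ of Lemma \ref{3.1} rather than ad hoc examples.) The genuine gap is in (b)$\,\Rightarrow\,$(c), which you correctly identify as the core but for which your proposed strategy would not work. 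Your plan for rank $\ge 3$ --- induction on $|G|$, ``passing to suitable subgroups and quotients and using structural results on minimal zero-sum sequences of maximal length'' --- relies on tools that do not exist: the structure of minimal zero-sum sequences of length $\mathsf D (G)$ is known only for cyclic groups, elementary $2$-groups, groups of rank two, and a few isolated special cases (the paper stresses exactly this point), and even the value of $\mathsf D (G)$ is unknown for general groups of rank three. Moreover, the extremal property $\mathsf L (UV) \cap [3, \mathsf D (G)-2] = \emptyset$ does not pass to subgroups or quotients in any controlled way, so there is no induction step to run.

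What the paper does instead is rank-free: from $\daleth (G) = \mathsf D (G)-1$ it extracts atoms $U, V$ with $\min \bigl( \mathsf L (UV) \setminus \{2\} \bigr) = \mathsf D (G)-1$ and observes that, up to symmetry, only four configurations can occur, namely $\mathsf L (UV) = \{2, \mathsf D (G)-1, \mathsf D (G)\}$, or $\mathsf L (UV) = \{2, \mathsf D (G)-1\}$ with $(|U|,|V|)$ equal to $(\mathsf D (G), \mathsf D (G))$, $(\mathsf D (G), \mathsf D (G)-1)$, or $(\mathsf D (G)-1, \mathsf D (G)-1)$. Each configuration gets a dedicated proposition (Propositions \ref{3.5}--\ref{3.8}): the first forces $G \cong C_2 \oplus C_{2n}$, the second is shown to be impossible, and the last two force $G$ to be cyclic, an elementary $2$-group, $C_2 \oplus C_{2n}$, or $C_2^{r-1} \oplus C_4$; combined with the exclusion of cyclic and elementary $2$-groups this gives (c). These proofs are long, delicate case analyses whose main workhorse is Lemma \ref{3.4} (replacing $g^k$ by $(-g)^k$ inside an atom of length $\mathsf D (G)$ produces an atom of intermediate length) together with counting arguments against the lower bound $\mathsf D (G) \ge 1 + \sum_{i=1}^r (n_i-1)$; they never classify maximal-length atoms and never induct on the group. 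This case analysis constitutes the bulk of the paper, and your proposal leaves it entirely open.
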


In order to discuss the statements of Theorem \ref{1.1} and their consequences,
let $H$ be a Krull monoid as in Theorem \ref{1.1}. Then the inequalities in ($*$) and the fact that
$\Delta (H)$ is an interval with $1 \in \Delta (H)$ (\cite{Ge-Yu12b}) imply that any of the following two conditions,
\[
\max \Delta (H)=\mathsf D (G)-3 \quad \text{or} \quad \Delta (H) = [1, \mathsf D (G)-3]
\]
is equivalent to the conditions in Theorem \ref{1.1}. The precise value of the Davenport constant is known for $p$-groups, for groups of rank at most two, and for some others. Thus we do know that $\mathsf D (C_2^{r-1}\oplus C_4)= r+3$ and that $\mathsf D (C_2 \oplus C_{2n})=2n+1$. But the value of $\mathsf D (G)$  is unknown  for general groups of rank three or for groups of the form $G=C_n^r$.  Even much less is known for the  catenary degree $\mathsf c (H)$ and for $\daleth (H)$. Their precise values are known only for the cases occurring in Theorem A and in Theorem \ref{1.1}.

\smallskip
As mentioned at the very beginning, holomorphy rings in global fields are (commutative) Krull monoids with finite class group. In recent years factorization theory has grown towards the non-commutative setting (e.g., \cite{B-P-A-A-H-K-M-R11, Ba-Ba-Go14}) with a focus on maximal orders in central simple algebras (they are non-commutative Krull monoids; see \cite{Sm13a, Ba-Sm15}). Combining these results with Theorem \ref{1.1} above, we obtain the following corollary.

\smallskip
\begin{corollary} \label{1.2}
Let $\mathcal O$ be a holomorphy ring in a global field $K$,  and $R$ a classical maximal $\mathcal O$-order in a central simple algebra $A$ over $K$ such that every stably free left $R$-ideal is free. Suppose that the  ray class group $G=\mathcal C_A ( \mathcal O)$ of $\mathcal O$ has at least three elements, let $\mathsf d$ denote a $($suitable$)$ distance on $R$, and $\mathsf c_{\mathsf d}(R)$ the $\mathsf d$-catenary degree of $R$. Then the following statements are equivalent{\rm \,:}
\begin{enumerate}
\item[(a)] $\mathsf c_{\mathsf d} (R) = \mathsf D (G)-1$.

\smallskip
\item[(b)] $\daleth (R) = \mathsf D (G)-1$.

\smallskip
\item[(c)] $G$ is isomorphic either to $C_2^{r-1} \oplus C_4$ for some $r \ge 2$ or  to $C_2 \oplus C_{2n}$ for some $n \ge 2$.

\end{enumerate}
\end{corollary}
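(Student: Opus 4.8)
The plan is to reduce Corollary \ref{1.2} to Theorem \ref{1.1} by means of a transfer homomorphism to the monoid of zero-sum sequences over $G$. First I would invoke the structural results of Smertnig \cite{Sm13a} and Baeth--Smertnig \cite{Ba-Sm15}: under the stated hypotheses (a classical maximal order $R$ in which every stably free left $R$-ideal is free) these show that $R^{\bullet}$, the monoid of cancellative elements of $R$, is a non-commutative Krull monoid and, crucially, that there is a transfer homomorphism $\theta \colon R^{\bullet} \to \mathcal B (G)$ onto the (commutative) block monoid of zero-sum sequences over the ray class group $G = \mathcal C_A (\mathcal O)$.

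Second, I would exploit the defining property of a transfer homomorphism, namely that it preserves sets of lengths. This yields $\mathcal L (R) = \mathcal L (\mathcal B (G))$, and since both $\daleth$ and $\Delta$ depend only on the system of sets of lengths, it follows at once that $\daleth (R) = \daleth (\mathcal B (G))$ and $\Delta (R) = \Delta (\mathcal B (G))$. The catenary degree requires more care, because in the non-commutative setting factorizations are compared by a distance $\mathsf d$ and one works with the $\mathsf d$-catenary degree $\mathsf c_{\mathsf d}$; the relevant statement in \cite{Ba-Sm15} is that for a suitable choice of $\mathsf d$ the transfer homomorphism $\theta$ is compatible with catenary degrees, giving $\mathsf c_{\mathsf d} (R) = \mathsf c (\mathcal B (G))$.

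Third, I would identify the arithmetic of $\mathcal B (G)$ with that of the commutative Krull monoids appearing in Theorem \ref{1.1}. The block monoid $\mathcal B (G)$ is itself a commutative Krull monoid with class group $G$ in which every class contains a prime divisor; equivalently, for any commutative Krull monoid $H$ as in Theorem \ref{1.1} there is a transfer homomorphism $H \to \mathcal B (G)$, so that $\mathsf c (H) = \mathsf c (\mathcal B (G))$ and $\daleth (H) = \daleth (\mathcal B (G))$. Combining the three steps gives $\mathsf c_{\mathsf d} (R) = \mathsf c (H)$ and $\daleth (R) = \daleth (H)$ for such an $H$. Since $|G| \ge 3$, the equivalences (a) $\Leftrightarrow$ (b) $\Leftrightarrow$ (c) now follow immediately from Theorem \ref{1.1}, with the very same condition on $G$.

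The hard part is genuinely the catenary-degree transfer in the non-commutative setting: unlike sets of lengths, the catenary degree is sensitive to the fine combinatorial structure of factorizations, and in the non-commutative case a factorization is only well defined relative to the chosen distance $\mathsf d$. Verifying that the chosen distance $\mathsf d$ is indeed suitable, that is, that $\theta$ preserves $\mathsf c_{\mathsf d}$, is exactly the technical content of \cite{Ba-Sm15}; and it is ultimately the hypothesis that every stably free left $R$-ideal is free that secures the existence of $\theta$ and hence the whole reduction.
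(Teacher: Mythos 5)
Your proposal matches the paper's own argument: the paper likewise invokes the transfer machinery of Smertnig and Baeth--Smertnig (citing \cite[Theorems 7.6 and 7.12]{Ba-Sm15}) to conclude that $\daleth(R)$ and $\mathsf c_{\mathsf d}(R)$ coincide with the corresponding invariants of the commutative Krull monoid $\mathcal B(G)$, and then settles the equivalences for $\mathcal B(G)$, which is exactly the content of Theorem \ref{1.1} via Proposition \ref{2.1}. Your identification of the catenary-degree compatibility of the transfer (the ``suitable'' distance $\mathsf d$) as the genuinely non-trivial ingredient is also precisely how the paper frames it.
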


\smallskip
Sets of lengths are the most investigated invariants in factorization theory. A standing (but wide open) conjecture states that for the class of Krull monoids under consideration sets of lengths are characteristic. To be more precise, let $H$ and $H'$ be Krull monoids with finite class groups $G$ and $G'$ with $|G|\ge |G'| \ge 4$ and suppose that each class contains a prime divisor. As usual, we write $\mathcal L (G)=\mathcal L (H)$ and $\mathcal L (H')=\mathcal L (G')$ (see Proposition \ref{2.1}). Then the conjecture states that $\mathcal L (G)=\mathcal L (G')$ implies that $G$ and $G'$ are isomorphic. For recent work in this direction we refer to \cite{Sc09c,Sc09b,B-G-G-P13a} or to \cite[Section 7.3]{Ge-HK06a}, \cite{Ge09a} for an overview. It turns out the extremal values of $\daleth (H)$ discussed in Theorem \ref{1.1} are the main tool to derive an arithmetical characterization of the associated groups. Thus we obtain the following corollary.

\medskip
\begin{corollary} \label{1.3}
Let $G$ be an abelian group.
\begin{enumerate}
\item If $\mathcal L (G) = \mathcal L ( C_2^{r-1} \oplus C_4 )$ for some $r \ge 2$, then $G \cong C_2^{r-1} \oplus C_4$.

\smallskip
\item If $\mathcal L (G) = \mathcal L ( C_2 \oplus C_{2n} )$ for some $n \ge 2$, then $G \cong C_2 \oplus C_{2n}$.
\end{enumerate}
\end{corollary}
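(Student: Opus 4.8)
The plan is to treat both parts simultaneously, writing $G_0$ for the target group ($C_2^{r-1}\oplus C_4$ in (1), $C_2\oplus C_{2n}$ in (2)) and assuming $\mathcal L(G)=\mathcal L(G_0)$. The starting point is that the invariants $\daleth$, the elasticity $\rho$, and the set of distances $\Delta$ depend only on the system of sets of lengths, being defined purely in terms of $\mathcal L$. Using $\rho(G)=\mathsf D(G)/2$ (valid since $|G|\ge 3$), the equality $\rho(G)=\rho(G_0)$ recovers $\mathsf D(G)=\mathsf D(G_0)$. Since $G_0$ appears in the list of Theorem \ref{1.1}(c), Theorem \ref{1.1} yields $\daleth(G_0)=\mathsf D(G_0)-1$; hence $\daleth(G)=\daleth(G_0)=\mathsf D(G_0)-1=\mathsf D(G)-1$, and the implication (b)$\Rightarrow$(c) of Theorem \ref{1.1} forces $G\cong C_2^{r'-1}\oplus C_4$ for some $r'\ge 2$ or $G\cong C_2\oplus C_{2n'}$ for some $n'\ge 2$.

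Next I would pin down the parameters from $\mathsf D(C_2^{r-1}\oplus C_4)=r+3$ and $\mathsf D(C_2\oplus C_{2n})=2n+1$ together with $\mathsf D(G)=\mathsf D(G_0)$. If $G$ lies in the same family as $G_0$, these Davenport constants are strictly monotone in the parameter, so the parameter is determined and $G\cong G_0$, finishing that case. The only surviving possibility is the cross-family case: after renaming, one must exclude $\mathcal L(C_2^{s-1}\oplus C_4)=\mathcal L(C_2\oplus C_{2m})$ with $s+3=2m+1$, i.e. $s=2m-2$. Here both groups share $\mathsf D=2m+1$, $\daleth=2m$, and $\max\Delta=\mathsf D-3=2m-2$, so none of the invariants used so far separates them. (For $s=2$, $m=2$ the two groups coincide, so a genuine conflict arises only when $s\ge 4$, equivalently $m\ge 3$.)

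The main obstacle is therefore this cross-family separation, which needs a finer invariant of $\mathcal L$ sensitive to the exponent and rank of $G$. My approach would be through $\Delta^*(G)$: using the known formula $\max\Delta^*(G)=\max\{\exp(G)-2,\ \mathsf r(G)-1\}$, one gets $\max\Delta^*(C_2\oplus C_{2m})=2m-2=\max\Delta(G)$, whereas $\max\Delta^*(C_2^{s-1}\oplus C_4)=s-1=2m-3<\max\Delta(G)$. Concretely, an element $g$ of order $2m$ yields a divisor-closed submonoid $\mathcal B(\{g,-g\})$ all of whose sets of lengths are arithmetic progressions with the single difference $2m-2$, so the maximal distance is attained as a minimal distance of a sub-system for $C_2\oplus C_{2m}$ but, by the rank/exponent bound, not for $C_2^{s-1}\oplus C_4$. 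The hard part is to show that this distinction is already visible in the abstract family $\mathcal L(G)$: one has $\mathcal L(\mathcal B(G_0))\subseteq\mathcal L(G)$ for every $G_0\subseteq G$, so the relevant single-difference sub-systems do sit inside $\mathcal L(G)$, and I expect the crux to be recognizing these sub-systems — hence recovering whether $\max\Delta(G)\in\Delta^*(G)$ — from $\mathcal L(G)$ alone.
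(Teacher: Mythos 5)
Your first half runs exactly as the paper's proof does: from $\mathcal L(G)=\mathcal L(G_0)$ you recover $\mathsf D(G)=\mathsf D(G_0)$ via elasticities (the paper uses the refined elasticities $\rho_{2k}$, you use $\rho=\mathsf D/2$; both work, though in either variant the infinite-$G$ case needs Kainrath's theorem to be dismissed, which you gloss over), then $\daleth(G)=\daleth(G_0)=\mathsf D(G_0)-1$ and Theorem \ref{1.1}(b)$\Rightarrow$(c) put $G$ into one of the two families, and monotonicity of $\mathsf D$ in the parameter settles the same-family case. You also correctly isolate the only remaining difficulty, the cross-family coincidence $\mathcal L(C_2^{s-1}\oplus C_4)=\mathcal L(C_2\oplus C_{2m})$ with $s=2m-2\ge 4$, where $\mathsf D$, $\daleth$ and $\max\Delta$ all agree.

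However, your resolution of that cross-family case has a genuine gap, and you acknowledge it yourself. The invariant $\Delta^*(G)$ is defined through the subsets $G_0\subset G$ (equivalently, through divisor-closed submonoids of $\mathcal B(G)$), i.e.\ through the group, not through the abstract system $\mathcal L(G)$; so $\mathcal L(G)=\mathcal L(G')$ does not a priori give $\Delta^*(G)=\Delta^*(G')$, and "$\max\Delta(G)\in\Delta^*(G)$" is not yet a statement about $\mathcal L(G)$ alone. The easy half of your idea is fine: $\mathcal B(\{g,-g\})$ with $\ord(g)=2m$ produces sets of lengths in $\mathcal L(C_2\oplus C_{2m})$ that are arbitrarily long arithmetic progressions with difference $2m-2$. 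The missing half is proving that $\mathcal L(C_2^{2m-3}\oplus C_4)$ contains \emph{no} arbitrarily long APs with difference $2m-2$; this would require heavy machinery (the Structure Theorem for Sets of Lengths plus $\max\Delta^*(C_2^{2m-3}\oplus C_4)=2m-3$, so that a long AP of difference $2m-2$ cannot be an AAMP with smaller difference), none of which you carry out -- you explicitly leave it as "the crux." The paper avoids all of this: Proposition \ref{3.5} states that $\{2,\mathsf D(G)-1,\mathsf D(G)\}\in\mathcal L(G)$ if and only if $G\cong C_2\oplus C_{2n}$, a criterion formulated purely in terms of $\mathcal L(G)$ (its condition (b) is equivalent to membership of that single set of lengths), and this immediately yields $\mathcal L(C_2\oplus C_{2n})\ne\mathcal L(C_2^{r-1}\oplus C_4)$ for $r,n\ge 3$. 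So the separating invariant you need is the one the paper built in Section \ref{3}, and without it (or a worked-out $\Delta_1$-type argument) your proof is incomplete.
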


\smallskip
In Section \ref{2} we gather together the required concepts and tools.  Section \ref{3} studies sets of lengths in monoids of zero-sum sequences over finite abelian groups, and it is mainly confronted with problems belonging to  structural (or inverse) additive number theory. Clearly, the irreducible elements of the monoids are precisely the minimal zero-sum sequences, and we mainly have to deal with minimal zero-sum sequences of extremal length $\mathsf D (G)$. The structure of minimal zero-sum sequences of length $\mathsf D (G)$ is known for cyclic groups and elementary $2$-groups (in both cases there are trivial answers), and for groups of rank two (\cite{Ga-Ge-Gr10a,Sc10b,Re15c}). Apart from that, structural results are available only in very special cases (\cite{Sc11b, Sa-Ch14a}), and this is precisely the lack of information which causes the difficulties in Section \ref{3} (see  Prop. \ref{3.5} - \ref{3.8}).
The proofs of the main results  are given in Section \ref{4}. They substantially use  transfer results (as partly summarized in Proposition \ref{2.1}, and also the transfer machinery  from \cite{Sm13a, Ba-Sm15}), the  work from \cite{Ge-Gr-Sc11a} (which relates the catenary degree and the $\daleth (\cdot)$ invariant of Krull monoids), and all the work from Section \ref{3}.

\medskip
\section{Preliminaries} \label{2}
\medskip

We denote by $\mathbb N$ the set of positive integers and by $\mathbb N_0$ the set of non-negative integers. For $n \in \mathbb N$, $C_n$ means a cyclic group of order $n$. For integers $a, b \in \mathbb Z$, $[a, b] = \{ x \in \mathbb Z \mid a \le x \le b\}$ is the discrete interval between $a$ and $b$. Let $A, B \subset \mathbb Z$ be subsets. Then $A+B=\{a+b \mid a \in A, b \in B\}$ denotes their sumset. If $A = \{ a_1, \ldots, a_l\}$ is finite with  $|A|=l\in \mathbb N_0$ and $a_1 < \ldots < a_l$, then $\Delta (A)=\{ a_i - a_{i -1} \mid i \in [2, l] \} \subset \mathbb N$ is the set of distances of $A$. By definition, $\Delta (A)=\emptyset$ if and only if $|A|\le 1$.

By a  monoid,  we always mean a commutative
semigroup with identity which satisfies the cancellation law (that
is, if $a,b ,c$ are elements of the monoid with $ab = ac$, then $b =
c$ follows). Let $H$ be a monoid. Then $H^{\times}$ denotes the unit group, $\mathsf q (H)$  the quotient group, $H_{\text{\rm red}} = H/H^{\times}$ the associated reduced monoid, and $\mathcal A (H)$ the set of atoms of $H$. A monoid $F$ is factorial with $F^{\times} = \{1\}$ if and only if it is free abelian. If this holds, then the set of primes $P \subset F$ is a basis of $F$, we write $F = \mathcal F (P)$, and every $a \in F$ has a representation of the form
\[
a = \prod_{p \in P} p^{\mathsf v_p (a)} \quad \text{with} \ \mathsf v_p (a) \in \mathbb N_0 \quad \text{and} \quad \mathsf v_p (a) = 0 \ \text{for almost all} \ p \in P \,.
\]
If $a \in F$, then $|a| = \sum_{p \in P} \mathsf v_p (a) \in \mathbb N_0$ is the length of $a$ and $\supp (a) = \{ p \in P \mid \mathsf v_p (a) > 0 \} \subset P$ is the support of $a$.

Let $G$ be an additively written abelian group and $G_0 \subset G$ a subset. Then $\langle G_0 \rangle \subset G$ denotes
the subgroup generated by $G_0$.    A family $(e_i)_{i \in I}$ of elements of $G$ is said to be \
{\it independent} \ if $e_i \ne 0$ for all $i \in I$ and, for every
family $(m_i)_{i \in I} \in \mathbb Z^{(I)}$,
\[
\sum_{i \in I} m_ie_i =0 \qquad \text{implies} \qquad m_i e_i =0 \quad \text{for all} \quad i \in I\,.
\]
The family $(e_i)_{i \in I}$ is called a {\it basis} for $G$ if
 $G = \bigoplus_{i \in I} \langle e_i \rangle$.

\smallskip
\noindent
{\bf Arithmetical concepts.} Our notation and terminology are consistent with \cite{Ge-HK06a}. We briefly gather some key notions. The free abelian monoid $\mathsf Z (H) = \mathcal F (\mathcal A (H_{\text{\rm red}}))$ is the factorization monoid of $H$, and the unique homomorphism $\pi \colon \mathsf Z (H) \to H_{\text{\rm red}}$ satisfying $\pi (u) = u$ for all $u \in \mathcal A (H_{\text{\rm red}})$ is the factorization homomorphism of $H$ (so $\pi$ maps a formal product of atoms onto its product in $H_{\text{\rm red}}$). For $a \in H$,
\[
\begin{aligned}
\mathsf Z_H (a) = \mathsf Z (a) & = \pi^{-1} (a) \subset \mathsf Z (H) \quad \text{is the set of factorizations of $a$, and} \\
\mathsf L_H (a) = \mathsf L (a) & = \{ |z| \mid z \in \mathsf Z (a) \} \quad \text{is the set of lengths of $a$} \,.
\end{aligned}
\]
Then $H$ is atomic (i.e., each non-unit can be written as a finite product of atoms) if and only if $\mathsf Z (a) \ne \emptyset$ for each $a \in H$, and $H$ is factorial if and only if $|\mathsf Z (a)|=1$ for each $a \in H$. Furthermore, for each $a \in H$,
$\mathsf L (a) = \{0\}$ if and only if $a \in H^{\times}$, and for all non-units the present definition coincides with the informal one given in the introduction. In particular, $\mathsf L (a) = \{1\}$ if and  only if $a \in \mathcal A (H)$. We denote by $\mathcal L (H) = \{\mathsf L (a) \mid a \in H\}$ the system of sets of lengths of $H$, and by
\[
\Delta (H) = \bigcup_{L \in \mathcal L (H) } \Delta (L) \qquad \text{the set of distances of $H$} \,.
\]
Distances occurring in sets of lengths $L$ with $2 \in L$ (in other words, in sets of lengths $\mathsf L (uv)$ with atoms $u,v \in H$) will play a central role. We define
\[
\daleth (H) = \sup \{ \min ( L \setminus \{2\}) \mid L \in \mathcal L (H) \ \text{with} \ 2 \in L \}
\]
and observe that $\daleth (H) \le 2+\sup \Delta (H)$.
Before we define catenary degrees we recall the concept of the distance between factorizations. Two factorizations  $z,\, z' \in \mathsf Z (H)$  can be written in
the form
\[
z = u_1 \cdot \ldots \cdot u_lv_1 \cdot \ldots \cdot v_m \quad
\text{and} \quad z' = u_1 \cdot \ldots \cdot u_lw_1 \cdot \ldots
\cdot w_n
\]  with
\[
\{v_1 ,\ldots, v_m \} \cap \{w_1, \ldots, w_n \} = \emptyset,
\]
where  $l,\,m,\, n\in \mathbb N_0$ and $u_1, \ldots, u_l,\,v_1, \ldots,v_m,\,
w_1, \ldots, w_n \in \mathcal A(H_\text{\rm red})$. Then $\gcd (z, z') = u_1 \cdot \ldots \cdot u_l$, and we call
$\mathsf d (z, z') = \max \{m,\, n\} = \max \{ |z \gcd (z,
z')^{-1}|, |z'\gcd (z, z')^{-1}| \}\in \mathbb N_0$ the {\it distance}
between $z$ and $z'$. It is easy to verify that  $\mathsf d \colon \mathsf Z (H) \times \mathsf Z (H) \to \mathbb N_0$ has all the usual properties of a metric.

Let $a \in H$ and
$N \in \mathbb N_0 \cup \{\infty\}$. A finite sequence $z_0, \ldots,
z_k \in \mathsf Z (a)$ is called an {\it  $N$-chain of
factorizations} if $\mathsf d (z_{i-1}, z_i) \le N$ for all $i \in
[1, k]$. We denote by  $\mathsf c (a)$   the smallest $N \in \mathbb N _0 \cup \{\infty\}$ such
      that any two factorizations $z,\, z' \in \mathsf Z (a)$ can be
      concatenated by an $N$-chain. Note that $\mathsf c (a) \le \sup \mathsf L (a)$, that $\mathsf c (a)=0$ if and only if $|\mathsf Z (a)|=1$, and if $|\mathsf Z (a)| > 1$, then $2 + \sup \Delta (\mathsf L (a)) \le \mathsf c (a)$.
Globalizing this concept we define
\[
\mathsf c(H) = \sup \{ \mathsf c(b) \, \mid b \in H\} \in \mathbb N_0 \cup
\{\infty\}
\]
as  the \ {\it catenary degree} of $H$. Then $\mathsf c (H)=0$ if and only if $H$ is factorial, and if $H$ is not factorial, then $2 + \sup \Delta (H)\le \mathsf c (H)$.

\smallskip
\noindent
{\bf Krull monoids.} A monoid homomorphism  $\varphi \colon H \to F$ is said to be a divisor homomorphism if $\varphi (a) \t \varphi (b)$ in $F$ implies that $a \t b$ in $H$ for all $a, b \in H$. A monoid $H$ is said to be a Krull monoid if one of the following equivalent properties is satisfied (see \cite[Theorem 2.4.8]{Ge-HK06a} or \cite{HK98}):
\begin{enumerate}
\item[(a)] $H$ is completely integrally closed and satisfies the ascending chain condition on divisorial ideals.

\item[(b)] $H$ has a divisor homomorphism into a free abelian monoid.

\item[(c)] $H$ has a divisor theory: this is a divisor homomorphism $\varphi \colon H \to F = \mathcal F (P)$ into a free abelian monoid such that for each $p \in P$ there is a finite set $E \subset H$ with $p = \gcd \big( \varphi (E) \big)$.
\end{enumerate}
Let $H$ be a Krull monoid. Then a divisor theory is unique up to unique isomorphism and the group  $\mathcal C (\varphi ) = \mathsf q (F)/\mathsf q (\varphi (H))$ depends only on $H$, and hence it is called the class group of $H$. We say that a class $g = [a]=a \mathsf q (\varphi (H)) \subset \mathsf q (F) \in \mathcal C (\varphi)$, with $a \in \mathsf q (F)$, contains a prime divisor if $g \cap P \ne \emptyset$. A domain $R$ is Krull if and only if its multiplicative monoid $R^{\bullet}=R\setminus \{0\}$ of non-zero elements is Krull. Thus Property (a) shows that every integrally closed noetherian domain is Krull. If $R$ is Krull with finite class group such that each class contains a prime divisor, then the same is true for regular congruence submonoids of $R$ (\cite[Section 2.11]{Ge-HK06a}). For monoids of modules which are Krull we refer the reader to \cite{Ba-Wi13a, Ba-Ge14b,Fa06a}.

Next we discuss a Krull monoid having a combinatorial flavor. It plays a universal role in all arithmetical studies of general Krull monoids. Let $G$ be an additive abelian group and $G_0 \subset G$ a subset. According to the tradition in combinatorial number theory, elements $S \in \mathcal F (G_0)$ will be called sequences over $G_0$ (for a recent presentation of their theory we refer to \cite{Gr13a}, and for an overview of their interplay with factorization theory we refer to \cite{Ge09a}). Let $S = g_1 \cdot \ldots \cdot g_l = \prod_{g \in G_0} g^{\mathsf v_g (S)} \in \mathcal F (G_0)$ be a sequence over $G_0$. Then $\mathsf h (S) = \max \{ \mathsf v_g (S) \mid g \in G_0 \}$ denotes the maximum multiplicity of $S$, $\sigma (S) = g_1 + \ldots + g_l \in G$ is the sum of $S$, and
\[
\Sigma (S) = \Big\{ \sum_{i \in I} g_i \mid \emptyset \ne I \subset [1,l] \Big\} = \Big\{ \sigma (T) \mid 1 \ne T \ \text{is a subsequence of $S$} \Big\} \subset G
\]
denotes the set of subsums of $S$. We say that $S$ is zero-sum free if $0 \notin \Sigma (S)$ and that $S$ is a zero-sum sequence if $\sigma (S)=0$. Obviously, $S$ is zero-sum free or a (minimal) zero-sum sequence if and only if $-S = (-g_1) \cdot \ldots \cdot (-g_l)$ has this property. The set
\[
\mathcal B (G_0) = \{ S \in \mathcal F (G_0) \mid \sigma (S)=0\} \subset \mathcal F (G_0)
\]
of all zero-sum sequences over $G_0$ is a submonoid of $\mathcal F (G_0)$, and since the embedding $\mathcal B (G_0) \hookrightarrow \mathcal F (G_0)$ obviously is a divisor homomorphism, Property (b) shows that $\mathcal B (G_0)$ is a Krull monoid. For each arithmetical invariant $*(H)$ defined for a monoid $H$, we write $* (G_0)$ instead of $*( \mathcal B (G_0) )$. This is the usual convention and will hardly lead to misunderstandings. In particular, we set $\mathcal L (G_0) = \mathcal L ( \mathcal B (G_0))$, $\daleth (G_0) = \daleth ( \mathcal B (G_0))$, and $\Delta (G_0)=\Delta (\mathcal B (G_0))$. The set $\mathcal A (G_0)=\mathcal A (\mathcal B (G_0))$ of atoms of $\mathcal B (G_0)$ is the set of minimal zero-sum sequences over $G_0$ and
\[
\mathsf D (G_0) = \sup \{ |A| \mid A \in \mathcal A (G_0) \} \in \mathbb N \cup \{\infty\}
\]
is the Davenport constant of $G_0$. If $G_0$ is finite, then $\mathcal A (G_0)$ is finite and hence $\mathsf D (G_0)< \infty$.

Suppose that $G$ is finite abelian, say $G \cong C_{n_1} \oplus \ldots \oplus C_{n_r}$ with $1 < n_1 \t \ldots \t n_r$. Then $1 + \sum_{i=1}^r (n_i-1) \le \mathsf D (G)$. We will use without further mention that equality holds for $p$-groups and for groups of rank $r \le 2$ (\cite[Chapter 5]{Ge-HK06a}). Furthermore, we will frequently use that,  if $\exp (G)+1 = n_r+1 = \mathsf D (G)$, then $r=2$ and $G \cong C_2 \oplus C_{n_2}$. If $S \in \mathcal F (G)$ is zero-sum free of length $|S|= \mathsf D (G)-1$, then $\Sigma (S)=G\setminus \{0\}$, and if $S \in \mathcal A (G)$ with $|S|=\mathsf D (G)$, then $\Sigma (S)=G$ (see \cite[Proposition 5.1.4]{Ge-HK06a}).

Suppose that $|G| \ge 3$. Then $\mathcal B (G)$ is a Krull monoid whose class group is isomorphic to $G$ and each class contains precisely one prime divisor (\cite[Proposition 2.5.6]{Ge-HK06a}). Furthermore, its arithmetic reflects the arithmetic of more general Krull monoids as it is summarized in the next proposition (for a proof see \cite[Section 3.4]{Ge-HK06a}).

\medskip
\begin{proposition} \label{2.1}
Let $H$ be a Krull monoid, $\varphi \colon H \to F = \mathcal F (P)$ a divisor theory, $G = \mathcal C(\varphi)$ the class group, and suppose that each class contains a prime divisor. Let $\widetilde{\boldsymbol \beta} \colon \mathcal F (P) \to \mathcal F (G)$ denote the unique homomorphism satisfying $\widetilde{\boldsymbol \beta} (p) = [p]$ for each $p \in P$, and let $\boldsymbol \beta = \widetilde{\boldsymbol \beta} \circ \varphi \colon H \to \mathcal B (G)$.
\begin{enumerate}
\item For each $a \in H$, we have $\mathsf L_H (a) = \mathsf L_{\mathcal B (G)} ( \boldsymbol \beta (a))$. In particular, $\mathcal L (H) = \mathcal L (G)$, $\Delta (H)=\Delta (G)$, and $\daleth (H) = \daleth (G)$.

\smallskip
\item We have $|G|\le 2$ if and only if $\mathsf D (G)\le 2$ if and only if $|L|=1$ for all $L \in \mathcal L (G)$.

\smallskip
\item If $|G| \ge 3$, then $\mathsf c (H) = \mathsf c (G)$.
    \end{enumerate}
\end{proposition}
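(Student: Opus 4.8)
The plan is to realise $\boldsymbol\beta$ as a transfer homomorphism and then to invoke the general principle that such maps preserve sets of lengths and, once their fibres are controlled, the catenary degree. Passing to $H_{\mathrm{red}}$ we may assume $H$ is reduced, so that $\varphi$ is an injective divisor homomorphism identifying $H$ with the saturated submonoid $\varphi(H) = \mathsf q(\varphi(H)) \cap F$ of $F = \mathcal F(P)$. First I would verify the defining properties of a transfer homomorphism for $\boldsymbol\beta = \widetilde{\boldsymbol\beta}\circ\varphi$. For surjectivity onto $\mathcal B(G)$, given a zero-sum sequence $S = g_1 \cdots g_l$ the hypothesis that each class contains a prime divisor lets me pick $p_i \in P$ with $[p_i] = g_i$; since $\sigma(S) = 0$ the element $p_1 \cdots p_l \in F$ lies in the trivial class, hence in $\varphi(H)$ by saturation, and its preimage $a \in H$ satisfies $\boldsymbol\beta(a) = S$. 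The condition $\boldsymbol\beta^{-1}(1) = \{1\}$ is clear. For the lifting property, if $\boldsymbol\beta(a) = S_1 S_2$ with $S_1, S_2 \in \mathcal B(G)$, I split the prime factorisation of $\varphi(a)$ accordingly into $y_1 y_2$ with $\widetilde{\boldsymbol\beta}(y_j) = S_j$; since $\sigma(S_j) = 0$ each $y_j$ lies in the trivial class, hence in $\varphi(H)$, which yields $a = a_1 a_2$ with $\boldsymbol\beta(a_j) = S_j$.

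Part (1) is now immediate: a transfer homomorphism maps atoms to atoms and factorisations to equally long factorisations, and every factorisation of $\boldsymbol\beta(a)$ lifts, so $\mathsf L_H(a) = \mathsf L_{\mathcal B(G)}(\boldsymbol\beta(a))$; consequently $\mathcal L(H) = \mathcal L(G)$, and since $\Delta(\cdot)$ and $\daleth(\cdot)$ are defined purely from the system of sets of lengths, also $\Delta(H) = \Delta(G)$ and $\daleth(H) = \daleth(G)$. For part (2), if $|G| \le 2$ then a direct inspection of $\mathcal B(G)$ (whose atoms are $0$ and, for $C_2$, the square of the nonzero element) gives $\mathsf D(G) \le 2$ and unique factorisation, so $|L| = 1$ for all $L$. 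Conversely, if $|G| \ge 3$ I exhibit a non-singleton length set: for $g$ of order $n \ge 3$ the element $(g(-g))^n = g^n (-g)^n$ has factorisations of lengths $n$ and $2$, while for an elementary $2$-group of order $\ge 4$ with independent $e_1, e_2$ the element $(e_1 e_2 (e_1+e_2))^2$ has lengths $2$ and $3$; in either case $|L| \ge 2$ and $\mathsf D(G) \ge 3$.

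The essential point is part (3). Here I would use the general comparison $\mathsf c(\mathcal B(G)) \le \mathsf c(H) \le \max\{\mathsf c(\mathcal B(G)), \mathsf c(H, \boldsymbol\beta)\}$, valid for any transfer homomorphism, where $\mathsf c(H, \boldsymbol\beta)$ is the catenary degree in the fibres: the smallest $N$ such that any two factorisations of a common element inducing the same factorisation $\bar{\boldsymbol\beta}(z) = \bar{\boldsymbol\beta}(z')$ under the induced map $\bar{\boldsymbol\beta}$ on factorisations can be joined by an $N$-chain staying inside that fibre. The lower bound is routine, as $\bar{\boldsymbol\beta}$ is surjective and does not increase distances. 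Everything therefore reduces to the bound $\mathsf c(H, \boldsymbol\beta) \le 2$, which is the main obstacle.

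To prove $\mathsf c(H, \boldsymbol\beta) \le 2$, fix $a$ and two factorisations $z = u_1 \cdots u_k$ and $z' = u'_1 \cdots u'_k$ with $\bar{\boldsymbol\beta}(z) = \bar{\boldsymbol\beta}(z')$; after matching we may assume $\boldsymbol\beta(u_i) = \boldsymbol\beta(u'_i)$ for all $i$. Both $z$ and $z'$ group the one fixed multiset of prime factors of $\varphi(a)$ into the atoms, and the equality of blocks means each atom carries the same number of primes from each class in both factorisations. Hence the two groupings are related, class by class, by finitely many transpositions that exchange two primes of a common class between two atoms. Each such transposition replaces a pair $u_i u_j$ by $\tilde u_i \tilde u_j$ obtained by swapping the two primes; since the swapped primes lie in the same class, the blocks are unchanged, so $\tilde u_i, \tilde u_j$ are again atoms lying over the same blocks, and $\tilde u_i \tilde u_j = u_i u_j$ in $H$ by injectivity of $\varphi$. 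This is a move of distance at most $2$ inside the fibre, and composing such moves connects $z$ and $z'$. Since part (2) together with the elementary estimate $2 + \sup\Delta(G) \le \mathsf c(\mathcal B(G))$ gives $\mathsf c(\mathcal B(G)) = \mathsf c(G) \ge 3 > 2$ whenever $|G| \ge 3$, the maximum equals $\mathsf c(G)$ and we conclude $\mathsf c(H) = \mathsf c(G)$. It is precisely in this fibre estimate that the hypothesis of a prime divisor in each class, and the fact that minimality of a zero-sum sequence depends only on the classes of its entries, are genuinely used.
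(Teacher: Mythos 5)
Your proposal is correct and follows exactly the route the paper itself takes: the paper offers no argument of its own but cites the transfer-homomorphism machinery of \cite[Section 3.4]{Ge-HK06a}, and what you have written is precisely a reconstruction of that standard proof (the block homomorphism $\boldsymbol\beta$ is a transfer homomorphism, hence preserves sets of lengths; the catenary degree transfers via $\mathsf c(\mathcal B(G)) \le \mathsf c(H) \le \max\{\mathsf c(\mathcal B(G)), \mathsf c(H,\boldsymbol\beta)\}$ together with the prime-swapping argument giving $\mathsf c(H,\boldsymbol\beta)\le 2$ and the bound $\mathsf c(G)\ge 3$ for $|G|\ge 3$). All the steps you sketch, including the class-by-class transposition argument in the fibres, are sound.
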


\medskip
\section{On sets of lengths $L \in \mathcal L (G)$ having extremal properties} \label{3}
\medskip

In this section we mainly study sets of lengths of zero-sum sequences over finite abelian groups. We start by recalling two results from \cite{Ge-Gr-Sc11a}.

\medskip
\begin{lemma} \label{3.1}
Let $G$ be a finite abelian group with $|G| \ge 3$, say $G = C_{n_1} \oplus \ldots \oplus C_{n_r}$ with $1 < n_1 \t \ldots \t n_r$.
\begin{enumerate}
\item $\mathsf c(G) \leq \max \Big\{ \Big\lfloor\frac{1}{2} \mathsf D(G)+1 \Big\rfloor,\, \daleth (G) \Big\}$.

\item $\daleth (G) \ge  \max\{n_r,\,1+\sum_{i=1}^{r}\lfloor\frac{n_i}{2}\rfloor\}$.
\end{enumerate}
\end{lemma}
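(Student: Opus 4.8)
The statement splits into a lower bound for $\daleth(G)$ in part (2) and an upper bound for $\mathsf c(G)$ in part (1); by Proposition \ref{2.1}(1) and (3) both can be read off inside the monoid $\mathcal B(G)$ of zero-sum sequences, so the plan is to work there throughout and to exhibit, resp.\ manipulate, explicit minimal zero-sum sequences. Fix a basis $(e_i)_{i=1}^r$ of $G$ with $\ord(e_i)=n_i$ and write $m_i=\lfloor n_i/2\rfloor$. For part (2), since $\daleth(G)$ is a supremum, it suffices to produce for each candidate value a single zero-sum sequence $B$ with $2\in\mathsf L(B)$ whose next smallest length equals that value. For $\daleth(G)\ge n_r$ (the only nontrivial case being $n_r\ge 3$) I take $g$ of order $n_r$ and set $B=g^{n_r}(-g)^{n_r}$; the atoms over $\{g,-g\}$ are exactly $g(-g)$, $g^{n_r}$ and $(-g)^{n_r}$, and a direct count of the multiplicities of $g$ and $-g$ in an arbitrary factorization forces $\mathsf L(B)=\{2,n_r\}$. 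For $\daleth(G)\ge 1+\sum_{i=1}^r m_i$ I put $g_0=\sum_{i=1}^r m_i e_i$ and
\[
U=(-g_0)\prod_{i=1}^r e_i^{\,m_i}, \qquad V=g_0\prod_{i=1}^r (-e_i)^{\,m_i}, \qquad B=UV .
\]
Using $0<m_i<n_i$ one checks that $U$ and $V$ are atoms and that the only atoms dividing $B$ are $U$, $V$, $g_0(-g_0)$ and the $e_i(-e_i)$; bookkeeping of the unique copies of $g_0$ and of $-g_0$ then shows every factorization of $B$ is either $U\cdot V$ or $g_0(-g_0)\prod_i\bigl(e_i(-e_i)\bigr)^{m_i}$, so that $\mathsf L(B)=\{2,\,1+\sum_i m_i\}$. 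Together these give $\daleth(G)\ge\max\{n_r,\,1+\sum_i m_i\}$, with the few degenerate coincidences among the chosen elements (such as $2g_0=0$) handled by inspection.

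For part (1) I set $N=\max\{\lfloor\tfrac12\mathsf D(G)\rfloor+1,\ \daleth(G)\}$ and argue by induction on $|B|$ that any two factorizations $z,z'\in\mathsf Z(B)$ are joined by an $N$-chain. After cancelling common atoms I may assume $\gcd(z,z')=1$, so it is enough to find a factorization $\tilde z$ with $\mathsf d(z,\tilde z)\le N$ that shares an atom with $z'$; then $z'$ and $\tilde z$ are factorizations of a proper divisor of $B$ (after removing the common atom) and the induction hypothesis finishes. To build such a $\tilde z$ I fix an atom $V$ of $z'$, locate a minimal collection of atoms $U_{i_1},\dots,U_{i_j}$ of $z$ whose product $Y$ is divisible by $V$ (minimality forces each $U_{i_s}$ to meet $V$, hence $j\le|V|\le\mathsf D(G)$), and re-factor $Y$ so that $V$ becomes one of its atoms. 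The cost of this single exchange is the distance between the old local factorization of $Y$ (with $j$ atoms) and the new one (with $V$ split off), and the whole point is to keep this distance at most $N$.

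Bounding that exchange distance is the crux, and it falls into two regimes matching the two entries of the maximum. When the relevant local product is essentially a product of two atoms, the move takes place inside some $B'$ with $2\in\mathsf L(B')$, and there the admissible step size is governed precisely by $\min(\mathsf L(B')\setminus\{2\})\le\daleth(G)$. In the remaining, length-changing situation one must split $Y$ across its ``middle'': since $|Y|\le\mathsf D(G)$ and every atom has length at least $2$, a balanced choice of the re-factorization should keep both the number of discarded and the number of newly created atoms at most $\lfloor\tfrac12\mathsf D(G)\rfloor+1$. I expect this balancing step to be the main obstacle. The naive peeling-off of a full atom only yields the weak bound $\mathsf D(G)$, so obtaining the factor $\tfrac12$ requires choosing the sub-product $Y$ and routing the exchange — possibly through an auxiliary intermediate factorization so that each leg has roughly half the length — in such a way that neither side of any single swap exceeds half of $\mathsf D(G)$ by more than one. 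Verifying that such a balanced exchange always exists, uniformly over all $B$, is where the real work lies.
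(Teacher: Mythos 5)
The first thing to note is that the paper does not reprove this lemma at all: its ``proof'' is a citation to \cite{Ge-Gr-Sc11a} (Proposition 4.1 for part 1, Theorem 4.2 for part 2), so your attempt is measured against that cited material. Your part (2) constructions are in fact the standard ones. The first ($B=g^{n_r}(-g)^{n_r}$) is correct. In the second, however, your classification of the atoms dividing $B=UV$ is false whenever at least two of the $n_i$ are even: for $G=C_4\oplus C_4$, with $g_0=2e_1+2e_2$, the sequence $g_0e_1^2(-e_2)^2$ is a minimal zero-sum sequence dividing $B$ which is none of $U$, $V$, $g_0(-g_0)$, $e_i(-e_i)$; the same happens for $C_6\oplus C_6$, for $C_3\oplus C_{12}$ (where moreover $2g_0\neq 0$), etc. So these extra atoms are not ``degenerate coincidences'' but occur systematically for the even-exponent groups that this paper is actually about. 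The conclusion $\mathsf L(B)=\{2,\,1+\sum_i\lfloor n_i/2\rfloor\}$ does survive, because in any factorization either $g_0$ and $-g_0$ lie in one atom, which must be $g_0(-g_0)$, and the rest factors uniquely into the $e_i(-e_i)$, or they lie in two distinct atoms whose product is forced, by comparing the multiplicities of $e_i$ and $-e_i$ in $B$, to equal $B$, giving length $2$; but that is the argument you would need and did not give.

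The decisive gap is in part (1). Your induction disposes of exactly the easy case: if some atom $V$ of $z'$ divides the product $Y$ of a \emph{proper} sub-collection of the atoms of $z$, then $Y$ is a proper divisor of $B$, the induction hypothesis already connects $U_{i_1}\cdot\ldots\cdot U_{i_j}$ to $V\cdot y$ inside $\mathsf Z(Y)$ by an $N$-chain, and no bound on any ``single exchange'' is needed at all. The entire content of the lemma sits in the remaining case, where no proper sub-collection covers any atom of $z'$ --- in particular when $z=U_1\cdot U_2$ has length $2$ and $z'$ is long, since then every $V_j$ needs elements of both $U_1$ and $U_2$, your $Y$ equals $B$, and induction gives nothing. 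There your two ``regimes'' do not close the argument: the inequality $\min\bigl(\mathsf L(B)\setminus\{2\}\bigr)\le\daleth(G)$ lets you jump from $U_1\cdot U_2$ to \emph{some} factorization of length at most $\daleth(G)$, but not to the prescribed $z'$, and joining that intermediate factorization to $z'$ is the same problem over the same $B$, so it cannot be fed back into the induction. Likewise the balanced-exchange bound $\lfloor\frac{1}{2}\mathsf D(G)+1\rfloor$ is never established; as you note yourself, the naive exchange only yields $\mathsf D(G)$ (indeed, counting the newly created atoms, not even that). The step you defer as ``where the real work lies'' is precisely Proposition 4.1 of \cite{Ge-Gr-Sc11a}; without it the proposal is an outline missing its core, not a proof.
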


\begin{proof}
See Proposition 4.1 and Theorem 4.2 in \cite{Ge-Gr-Sc11a}.
\end{proof}

\medskip
\begin{lemma}\label{3.2}
Let $G$ be an abelian group with $|G| \ge 3$, and let $U,\,V\in \mathcal A(G)$ with $\max \mathsf L (UV) \ge 3$.
\begin{enumerate}
\item  Let $K \subset  G$ be a finite cyclic subgroup.  If $\sum_{h \in K} \mathsf v_h (UV) >  |K|$ and there exists a non-zero $g \in K$ such that $\mathsf v_{g}(U)>0$ and
       $\mathsf v_{-g}(V)>0$, then $\mathsf L (UV)\cap [3,|K|] \ne \emptyset$.

\smallskip
\item If $\mathsf L (UV)\cap [3, \ord (g)] = \emptyset$ for some  $g \in G$, then  $\mathsf v_g (U) + \mathsf v_{-g} (V) \le \ord (g)$.
\end{enumerate}
\end{lemma}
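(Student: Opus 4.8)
```latex
\textbf{Proof sketch for Lemma \ref{3.2}.}

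The plan is to treat part (2) as a near-immediate consequence of part (1), so I would concentrate the real work on part (1). The conceptual idea behind (1) is that the hypotheses hand us a short zero-sum factorization inside the cyclic subgroup $K$, and the point is to extract a second factorization of $UV$ whose length lands in the interval $[3, |K|]$. First I would fix a nonzero $g \in K$ with $\mathsf v_g(U) > 0$ and $\mathsf v_{-g}(V) > 0$, and consider the element $a = UV \in \mathcal B(G)$. Since $U$ and $V$ are atoms, the factorization $U \cdot V$ has length $2$, so $2 \in \mathsf L(UV)$, and the assumption $\max \mathsf L(UV) \ge 3$ guarantees that $UV$ is not simply a product of two atoms in an essentially unique way; there is genuine factorization freedom to exploit. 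The guiding principle is to build atoms supported on $K$: since $g$ and $-g$ both appear, the sequence $g \cdot (-g)$ is already a zero-sum sequence, and more generally the hypothesis $\sum_{h \in K} \mathsf v_h(UV) > |K|$ says that $UV$ contains \emph{strictly more} than $|K|$ elements lying in $K$ (counted with multiplicity).

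The key step is a counting/pigeonhole argument on the subsequence $T$ of $UV$ consisting of all terms that lie in $K$. Because $|T| = \sum_{h \in K} \mathsf v_h(UV) > |K|$ and $K$ is cyclic of order $|K|$, the subsequence $T$ must contain a nontrivial zero-sum subsequence (indeed $\mathsf D(K) = |K|$, so any sequence over $K$ of length exceeding $|K|$ has a proper nonempty zero-sum subsequence). The refinement I would aim for is to split off a zero-sum subsequence $W \t T$ with $g \t W$ and $(-g) \t W$ so that $W$ genuinely mixes a term from $U$ with a term from $V$; this is where the explicit availability of $g$ in $U$ and $-g$ in $V$ is used, and it ensures the resulting factorization is a true rearrangement rather than the original $U \cdot V$. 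Writing $UV = W \cdot (UVW^{-1})$ and then factoring each of $W$ and the cofactor $UVW^{-1}$ into atoms, I would count lengths: factoring the part supported in $K$ produces atoms each of length at least $2$ but at most $|K|$ (atoms over $K \cong C_{|K|}$ have length at most $\mathsf D(K) = |K|$), and I would argue that one can arrange for the total length of the resulting factorization of $UV$ to fall in $[3, |K|]$. Concretely, I expect to produce a factorization of length between $3$ and $|K|$ by peeling off atoms within $K$ one at a time and tracking how the length grows, using that each atom over a cyclic group has bounded length.

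The main obstacle will be the precise \emph{length control}: ensuring the constructed factorization has length at least $3$ (not merely $2$, which would give nothing new) while simultaneously keeping it at most $|K|$. The lower bound $3$ requires that the new factorization genuinely differs from $U \cdot V$, which is exactly why we need the term $g$ from $U$ and $-g$ from $V$ to be absorbed into a common atom $W$, forcing a nontrivial reshuffling; the upper bound $|K|$ requires that the atoms we build stay short, which follows from $\mathsf D(K) = |K|$ together with the condition $\sum_{h \in K} \mathsf v_h(UV) > |K|$ bounding how much ``mass'' sits in $K$. Balancing these two constraints is the delicate combinatorial core.

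For part (2), I would argue by contraposition. Suppose $\mathsf v_g(U) + \mathsf v_{-g}(V) > \ord(g)$; set $K = \langle g \rangle$, a cyclic subgroup of order $\ord(g)$. Then the terms equal to $g$ and to $-g$ already contribute $\mathsf v_g(U) + \mathsf v_{-g}(V) > \ord(g) = |K|$ to $\sum_{h \in K} \mathsf v_h(UV)$, so the hypothesis $\sum_{h \in K} \mathsf v_h(UV) > |K|$ of part (1) is satisfied, and since $g \ne 0$ with $\mathsf v_g(U) > 0$ and $\mathsf v_{-g}(V) > 0$, part (1) yields $\mathsf L(UV) \cap [3, \ord(g)] \ne \emptyset$. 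This contradicts the assumption of part (2), completing the argument.
```
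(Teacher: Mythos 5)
Your treatment of part (2) is essentially the paper's own: the paper likewise deduces (2) from (1) by taking $K=\langle g\rangle$. Note, though, that your contrapositive silently uses two facts: that $g\neq 0$, and that $\mathsf v_g(U)+\mathsf v_{-g}(V)>\ord(g)$ forces \emph{both} multiplicities to be positive. Both follow from minimality of the atoms: since $U,V\in\mathcal A(G)$ we have $\mathsf v_g(U)\le \ord(g)$ and $\mathsf v_{-g}(V)\le \ord(g)$ (a multiplicity $\ge\ord(g)$ would force $U=g^{\ord(g)}$, resp.\ $V=(-g)^{\ord(g)}$), so if the sum exceeds $\ord(g)$ neither term can vanish; and $0$ cannot divide $U$ or $V$, else that atom equals $0$ and $\mathsf L(UV)=\{2\}$, contradicting $\max\mathsf L(UV)\ge 3$. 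The paper dispatches exactly these degenerate cases with its remark that the assertion is clear when $\mathsf v_g(U)=0$ or $\mathsf v_{-g}(V)=0$.

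The genuine gap is part (1). The paper does not prove it here at all: it is imported wholesale from the predecessor paper, \cite[Lemma 5.2]{Ge-Gr-Sc11a}. You instead attempt a from-scratch argument and leave open precisely the step that \emph{is} that lemma --- you name the ``length control'' as ``the delicate combinatorial core'' and never carry it out. Moreover, the two mechanisms you propose point in unworkable directions. For the upper bound, you argue that atoms supported in $K$ have length at most $\mathsf D(K)=|K|$ and hence ``stay short''; but the length of a factorization is the \emph{number} of atoms, so short atoms make factorizations longer, not shorter, and nothing in your sketch bounds the number of atoms needed for the cofactor $UVW^{-1}$, which contains all terms of $UV$ outside $K$ (and $|UV|$ can be far larger than $2|K|$). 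For the lower bound, absorbing $g$ (from $U$) and $-g$ (from $V$) into a common atom $W$ does not force length $\ge 3$: the cofactor $UVW^{-1}$ may itself be an atom, giving a factorization of length exactly $2$ that merely differs from $U\cdot V$. A concrete sanity check shows the sketch cannot be completed as stated: if $|K|=2$, then $[3,|K|]=\emptyset$, so the lemma asserts that its hypotheses are \emph{unsatisfiable}; indeed, using minimality of $U$ and $V$ one checks that in this case $\mathsf L(UV)=\{2\}$, contradicting $\max\mathsf L(UV)\ge 3$. Any correct proof must therefore derive a contradiction here rather than construct a factorization, so a purely constructive pigeonhole argument of the kind you outline cannot suffice; the hypothesis $\max\mathsf L(UV)\ge 3$ and the minimality of $U$ and $V$ must enter in an essential way, which is exactly what the substantially longer proof of the cited Lemma 5.2 does.
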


\begin{proof}
1. See Lemma 5.2 in \cite{Ge-Gr-Sc11a}.

\smallskip
2. Let $g \in G$.  If $\mathsf v_g (U)=0$ or $\mathsf v_{-g} (V)=0$, then the assertion is clear. If $\mathsf v_g (U)>0$ and $\mathsf v_{-g} (V)>0$, then the assertion follows from 1. with $K = \langle g \rangle$.
\end{proof}

\smallskip
Let $A$ be a zero-sum sequence over a finite abelian group $G$ and suppose that $0 \nmid A$. If
\[
A = U_1 \cdot \ldots \cdot U_k = V_1 \cdot \ldots \cdot V_l \,,
\]
where $k,l \in \mathbb N$ and $U_1, \ldots, U_k,V_1, \ldots, V_l \in \mathcal A (G)$, then obviously
\[
2 l \le \sum_{i=1}^l |V_i| = |A|= \sum_{i=1}^k |U_i| \le k \mathsf D (G) \,.
\]
These inequalities will be used implicitly in many of the forthcoming arguments.

\medskip
\begin{lemma} \label{3.3}
Let $G = C_4 \oplus C_4$. Then $\daleth (G) = \mathsf c (G) = 5$.
Moreover, if $U, V \in \mathcal A (G)$ with $\mathsf L (UV) \cap [2,
5] = \{2, 5\}$, then $\langle \supp (UV) \rangle = G$.
\end{lemma}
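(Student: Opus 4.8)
The plan is to treat the equality $\daleth(G)=\mathsf c(G)=5$ and the ``moreover'' assertion separately, and to split the equality into the cheap inequalities and the genuinely combinatorial upper bound. First I would record that $\mathsf D(G)=n_1+n_2-1=7$ since $G$ has rank two. Lemma \ref{3.1}(2) then gives $\daleth(G)\ge \max\{4,\,1+\lfloor 4/2\rfloor+\lfloor 4/2\rfloor\}=5$, while Lemma \ref{3.1}(1) gives $\mathsf c(G)\le \max\{\lfloor \tfrac12\mathsf D(G)+1\rfloor,\daleth(G)\}=\max\{4,\daleth(G)\}=\daleth(G)$. Combined with the general inequality $\daleth(G)\le \mathsf c(G)$ from the chain $(*)$, this already yields $\mathsf c(G)=\daleth(G)$. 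Hence everything reduces to proving the single upper bound $\daleth(G)\le 5$.

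To prove $\daleth(G)\le 5$ I would argue by contradiction: suppose $U,V\in\mathcal A(G)$ satisfy $\mathsf L(UV)\cap[3,5]=\emptyset$ while $\max\mathsf L(UV)\ge 6$. Since $|U|,|V|\le\mathsf D(G)=7$ and every atom has length at least $2$, a factorization of length $k$ forces $2k\le|UV|\le 14$; thus $\max\mathsf L(UV)\in\{6,7\}$ and $|UV|\in\{12,13,14\}$, leaving only finitely many length/support patterns. The tools are: the contrapositive of Lemma \ref{3.2}(1) (because $[3,|K|]\subseteq[3,4]\subseteq[3,5]$ for every cyclic $K$, one gets $\sum_{h\in K}\mathsf v_h(UV)\le|K|$ as soon as there is a nonzero $g\in K$ with $\mathsf v_g(U)>0$ and $\mathsf v_{-g}(V)>0$); Lemma \ref{3.2}(2); the atom bound $\mathsf v_g(W)\le\ord(g)-1$; and the observation that a factorization into length-$2$ atoms writes $UV$ as a product of pairs $h(-h)$, so that $\mathsf v_g(UV)=\mathsf v_{-g}(UV)$ for all $g$, i.e. $UV$ is \emph{symmetric}.

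The symmetric extreme case $\max\mathsf L(UV)=7$ then falls quickly: here $|U|=|V|=7$ and $UV$ is symmetric, so by the structure theorem for minimal zero-sum sequences of length $\mathsf D(G)$ over rank-two groups (\cite{Ga-Ge-Gr10a,Sc10b,Re15c}) the atom $U$ contains an element $u$ of order $4$ with $\mathsf v_u(U)=3$. Minimality forbids the proper zero-sum subsequence $u^3(-u)^3\t U$, so $\mathsf v_{-u}(U)\le 2$; symmetry gives $\mathsf v_{-u}(UV)=\mathsf v_u(UV)\ge 3$, whence $\mathsf v_{-u}(V)\ge 1$, and Lemma \ref{3.2}(1) with $K=\langle u\rangle$ forces $\sum_{h\in K}\mathsf v_h(UV)\le 4$ against $\mathsf v_u(UV)+\mathsf v_{-u}(UV)\ge 6$ -- a contradiction. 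The remaining patterns have $\max\mathsf L(UV)=6$ with $|UV|\in\{12,13,14\}$, and this is where I expect the main obstacle to lie. For $|UV|=12$ the product is again symmetric but $U,V$ have length $6<\mathsf D(G)$, so there is \emph{no} Property-B element to start the argument; for $|UV|\in\{13,14\}$ only the length-$2$ part of $UV$ is forced to be symmetric. These cases seem to require the finer structure of minimal zero-sum sequences of length close to $\mathsf D(G)$ over $C_4\oplus C_4$ -- precisely the structural additive information the paper flags as scarce -- together with a careful bookkeeping of the multiplicities permitted by Lemma \ref{3.2}.

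For the ``moreover'' statement I would set $G'=\langle\supp(UV)\rangle$ and use that lengths computed in $\mathcal B(G')$ agree with those in $\mathcal B(G)$. From $5\in\mathsf L(UV)$ we get $10\le|UV|=|U|+|V|\le 2\mathsf D(G')$, hence $\mathsf D(G')\ge 5$. The only proper subgroups of $C_4\oplus C_4$ with Davenport constant at least $5$ are those isomorphic to $C_4\oplus C_2$ (where $\mathsf D=5$), since any proper subgroup has order at most $8$ and $C_8,C_2^3$ do not embed. So if $G'\ne G$ then $|U|=|V|=5=\mathsf D(G')$ and the length-$5$ factorization consists of five length-$2$ atoms, making $UV$ symmetric. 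The rank-two structure theorem gives $u$ of order $4$ with $\mathsf v_u(U)=3$; as $\mathsf v_u(U)+\mathsf v_{-u}(U)\le|U|=5$ forces $\mathsf v_{-u}(U)\le 2$, symmetry yields $\mathsf v_{-u}(V)\ge 1$, and Lemma \ref{3.2}(1) with $K=\langle u\rangle$ (applicable because $\mathsf L(UV)\cap[3,4]=\emptyset$) gives $\sum_{h\in K}\mathsf v_h(UV)\le 4$ against $\mathsf v_u(UV)+\mathsf v_{-u}(UV)\ge 6$. This contradiction forces $G'=G$.
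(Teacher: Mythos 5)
Your reduction of the whole lemma to the single inequality $\daleth (G) \le 5$ is sound and is essentially the paper's own reduction (the paper cites \cite[Proposition 4.1.2 and Corollary 4.3]{Ge-Gr-Sc11a}, which is what your use of Lemma \ref{3.1} amounts to). Your handling of the extreme case $\max \mathsf L (UV) = 7$ (equivalently $V=-U$ and $|U|=7$) is a correct alternative to the paper's citation of \cite[Theorem 6.6.7]{Ge-HK06a}: the structure result $U = e_1^3\prod_{\nu=1}^4(a_\nu e_1+e_2)$ together with Lemma \ref{3.2}.1 applied to $K=\langle e_1\rangle$ does force $\mathsf L (UV) \cap [3,4] \ne \emptyset$. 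Your proof of the ``moreover'' statement is also correct, though considerably more laborious than the paper's, which simply observes that $\daleth \bigl( \langle \supp (UV) \rangle \bigr) \ge 5$ while every proper subgroup of $C_4 \oplus C_4$ embeds into $C_2 \oplus C_4$, so that Theorem A and monotonicity of $\daleth$ give $\daleth (K) \le \mathsf c (C_2 \oplus C_4) = 4 < 5$; your route additionally needs the (true, but not free) fact that every atom of length $5$ over $C_2\oplus C_4$ contains an order-$4$ element with multiplicity $3$.

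The genuine gap is the one you flag yourself: the case $\max \mathsf L (UV) = 6$, i.e. $|UV| \in \{12,13,14\}$ with no factorization of length in $[3,5]$, is not proved at all — you only state where you ``expect the main obstacle to lie'' and which tools it ``seems to require''. This is not a residual technicality; it is the entire substance of the paper's proof. There the argument runs: by \cite[Lemma 3.6]{Ge-Gr09c} the product $UV$ has an atom divisor $W_1$ with $|W_1| \in [2,4]$; choosing $|W_1|$ maximal and extending to $UV = W_1 \cdot \ldots \cdot W_k$ one gets $k=6$, and then three separate case analyses according to $|W_1| \in \{2,3,4\}$ are carried out. The case $|W_1|=2$ (so $V=-U$, $|U|=6$) is settled by elementary arguments on $\mathsf h (U)$, the number of order-$2$ elements and bases of $G$; the cases $|W_1| \in \{3,4\}$ (so $|U|=7$) both use the explicit form $U = e_1^3 \prod_{\nu=1}^4 (a_\nu e_1 + e_2)$ from \cite[Example 5.8.8]{Ge-HK06a} and a careful enumeration of how $\gcd (W_1, U)$ can sit inside $U$, in each subcase exhibiting a zero-sum subsequence of length in $[3,5]$. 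None of this follows routinely from your outline: in particular, Lemma \ref{3.2} alone cannot close these cases — for $V=-U$ with $|U|=6$ and $\mathsf h (U)$ small, the hypothesis $\sum_{h \in K} \mathsf v_h (UV) > |K|$ may fail for every cyclic $K$, so one must construct intermediate-length atoms by hand — and the ``finer structural information'' you appeal to is precisely what the paper's three cases work out. As it stands, the proposal proves the cheap inequalities, one special case, and the moreover statement, but not the lemma.
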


\begin{proof}
First, we prove the moreover statement. Let $U, V \in \mathcal A
(G)$ with $\mathsf L (UV) \cap [2, 5] = \{2, 5\}$. Then $\daleth
\bigl( \langle \supp (UV) \rangle \bigr) \ge 5$. Since for every
proper subgroup $K$ of $G$ we have $\daleth (K) \le \daleth (C_2
\oplus C_4) < \mathsf D (G)=5$ by Theorem {\bf A}, it follows
that $\langle \supp (UV) \rangle = G$.

Recall that $\mathsf D (G) = 7$, and note that it  suffices to prove
$\daleth (G) \le 5$, since then combing with \cite[Proposition 4.1.2
and Corollary 4.3]{Ge-Gr-Sc11a} yields  $5 \le \daleth (G) = \mathsf
c (G) \le  5$.

Let $U, V \in \mathcal A (G)$ with $\max \mathsf L (UV)
> 5$ be given. Since $\max \mathsf L (UV) \le \min \{|U|, |V| \} \le
\mathsf D (G) = 7$, it follows that $\min \{|U|, |V| \} \in
\{6,7\}$. We have to show that there exists a factorization $U V =
W_1 \cdot \ldots \cdot W_k$ with $W_1, \ldots, W_k \in \mathcal A
(G)$ and $k \in [3, 5]$. We start with two  special cases.

First, suppose that $V = -U$ and $|U| = 7$. Then \cite[Theorem
6.6.7]{Ge-HK06a} implies that $4 \in \mathsf L (UV)$, and thus the
assertion follows.

Second, suppose that there exist  $W_1, W_2 \in \mathcal A (G)$ such
that $W_1W_2 \t UV$, $5 \ge |W_1| \ge |W_2|$, and $|W_1 W_2| \ge 7$.
Then there exist $k \in \mathbb N_{\ge 3}$, $W_3, \ldots, W_k \in \mathcal
A (G)$ such that $UV = W_1 \cdot \ldots \cdot W_k$, and
\[
2(k-2) \le |W_3 \cdot \ldots \cdot W_k| = |UV| - |W_1W_2| \le 14 - 7
= 7
\]
implies $k \le 5$, and the assertion follows.

Assume to the contrary that $UV$ has no factorization of length $k
\in [3, 5]$.  Then none of the two special cases holds true. By
\cite[Lemma 3.6]{Ge-Gr09c}, $UV$ has a zero-sum subsequence $W_1 \in
\mathcal A (G)$ of length $|W_1| \in [2,4]$, and suppose that
$|W_1|$ is maximal. Then there is a factorization
\[
UV = W_1 \cdot \ldots \cdot W_k \quad \text{with} \quad k \ge 3
\quad \text{and} \quad  W_1, \ldots, W_k \in \mathcal A (G) \,.
\]
By assumption we have $k \ge 6$. Since $k=7$ would imply that $V =
-U$ and $|U| = 7$, it follows that $k=6$. We distinguish three
cases.

\medskip
\noindent CASE 1: \, $|W_1| = 2$.

Since $|W_1|$ is maximal, we get $|W_1| = \ldots = |W_k| = 2$, and
thus $|UV| \in \{12, 14\}$ and $V = -U$. Since we are not in the
first special case, it follows that $|U| = 6$, say $U = g_1 \cdot
\ldots \cdot g_6$.

If $\mathsf h (U) = 3$, say $g_1=g_2=g_3$, then $W = (-g_1)g_4g_5g_6
\in \mathcal A (G)$ with $W \t UV$, a contradiction. Thus $\mathsf h
(U) \le 2$. Since $\mathsf D (C_2^2) = 3$, $\supp (U)$ contains at
most two elements of order $2$, say $\ord (g_1) = \ldots = \ord
(g_4) = 4$. Since $\mathsf D (C_2 \oplus C_4) = 5$, it follows that
$\supp (U)$ generates $G$. Recall that the maximal size of a minimal
generating set of $G$ equals $\mathsf r^* (G) = 2$, and that every
generating set contains a basis (see \cite[Appendix A]{Ge-HK06a}).
Thus the elements of order $4$ contain a basis.

Suppose there are two elements with multiplicity two, say $g_1 =
g_3$ and $g_2 = g_4$. Then $(g_1, g_2)$ is a basis of $G$ and $g_5 =
a g_1 + b g_2$ with $a, b \in [1,3]$. Thus there is a zero-sum
subsequence $W \in \mathcal A (G)$ with $|W| > 2$, $W \t UV$ and
$\supp (W) \in \{g_5, g_1, g_2, - g_1, -g_2\}$, a contradiction.

Suppose there is precisely one element with multiplicity two, say
$g_1 = g_3$. Then $(g_1, g_2)$ is a basis of $G$ and there is an
element in $\supp (U)$ of the form $ag_1 + bg_2$ with $a \in [1,3]$
and $b \in \{1,3\}$. As above we obtain a zero-sum sequence $W$ with
$|W| > 2$, a contradiction.

Suppose that $\mathsf h (U) = 1$. Then $(g_1, g_2)$ is a basis of
$G$. Then there is one element in $\supp (U)$ which is not of the
form $\{ g_1+2g_2, 2g_1+g_2, 2g_1+2g_2\}$, and hence it has the form
$ag_1+bg_2$ with $a, b \in \{1,3\}$, and we obtain a contradiction
as above. \qed

\medskip
\noindent CASE 2: \, $|W_1| = 4$.

Since $k=6$, $|UV| \le 14$ and $|W_1| = 4$, we get $|W_2| = \ldots =
|W_6| = 2$ and  $|U| = |V| = 7$. By \cite[Example 5.8.8]{Ge-HK06a}, there exists a basis $(e_1, e_2)$
of $G$ such that
\[
U = e_1^3 \prod_{\nu=1}^4 (a_{\nu}e_1 + e_2)
\]
with $a_1, a_2, a_3 , a_4 \in [0,3]$ and $a_1+a_2+a_3+a_4 \equiv 1
\mod 4$. We set $X = \gcd (W_1, U)$. Then there are $X, Y, Z \in
\mathcal F (G)$ such that $U = X Z$, $V = Y (-Z)$ and $W_1 = X Y$.
After renumbering if necessary there are the following three cases:
\[
X = e_1^2, \ X = e_1 (a_1e_1+e_2) \quad \text{or} \quad X =
(a_1e_1+e_2)(a_2e_1+e_2) \,.
\]
If $X = e_1^2$, then $(-e_1)(a_{1}e_1 + e_2) \cdot \ldots \cdot
(a_{4}e_1 + e_2)$ is a minimal zero-sum subsequence of $UV$ of
length $5$, a contradiction.

Suppose that $X = (a_1e_1+e_2)(a_2e_1+e_2)$. If $a_3 \ne a_4$, then
$(a_3e_1+e_2)(-a_4e_1-e_2) e_1^3$ has a zero-sum subsequence of
length $4$ or $5$, a contradiction. Suppose that $a_3 = a_4$. Then
$(e_1, a_3e_1+e_2)$ is a basis, and after changing notation if
necessary we may suppose that $a_3= 0$. Then $(a_1e_1 +
e_2)(a_2e_1+e_2) \in \{ e_2(e_1+e_2), (2e_1+e_2)(-e_1+e_2)\}$. In
the first case $e_1^3 (e_1+e_2)(-e_2)$ and in the second case
$(-e_1)^3(-e_1+e_2)(-e_2)$ is a zero-sum subsequence of $UV$ of
length $5$, a contradiction.

Suppose that $X = e_1 (a_1e_1+e_2)$. If two of the $a_2, a_3, a_4$
are distinct, say $a_2 \ne a_3$, then  $(a_2e_1+e_2)(-a_3e_1-e_2)
e_1^2 (-e_1)^2$ has a zero-sum subsequence of length greater than
$2$, a contradiction. Suppose that $a_2 = a_3 = a_4$. Then $(e_1,
e_2'= a_2e_1+e_2)$ is a basis. Then $U = e_1^3 (e_1+e_2'){e_2'}^3$
and $e_1^3(e_1+e_2') (-e_2')$ is a minimal zero-sum subsequence of
$UV$ of length $5$, a contradiction.

\medskip
\noindent CASE 3: \,$|W_1| = 3$.

After renumbering if necessary we may suppose that $|U| = 7$, $|V|
\in \{6,7\}$, $|W_2| \in \{2,3\}$ and $|W_3| = \ldots = |W_6| = 2$.
By \cite[Example 5.8.8]{Ge-HK06a}, there exists a basis $(e_1, e_2)$ of $G$ such that
\[
U = e_1^3 \prod_{\nu=1}^4 (a_{\nu}e_1 + e_2)
\]
with $a_1, a_2, a_3 , a_4 \in [0,3]$ and $a_1+a_2+a_3+a_4 \equiv 1
\mod 4$. Thus $U$ has a subsequence $S$ of length $|S| = 4$ such
that $-S$ is a subsequence of $V$.

Suppose there are two distinct elements in $\{a_1e_1+e_2, \ldots,
a_4e_1+e_2\}$, say $(a_1e_1+e_2)$ and $(a_2e_1+e_2)$ such that
$(a_1e_1+e_2)(a_2e_1+e_2) \t S$. Then either
$(a_1e_1+e_2)(-a_2e_1-e_2)e_1^3$ or $(-a_1e_1-e_2)(a_2e_1+e_2)e_1^3$
contains a zero-sum subsequence of length $4$ or $5$, a
contradiction.

Now we suppose that this does not hold and distinguish three cases.

Suppose $S = e_1 (a_1e_1+e_2)(a_2e_1+e_2)(a_3e_1+e_2)$. Then
$a_1=a_2=a_3=a$, and since $(e_1, ae_1+e_2)$ is a basis of $G$, we
may suppose that $a= 0$. Then $U = e_1^3e_2^3(e_1+e_2)$ and
$(-e_1)e_2^3(e_1+e_2)$ is a zero-sum subsequence of $UV$ of length
$5$, a contradiction.

Suppose $S = e_1^2 (a_1e_1+e_2)(a_2e_1+e_2)$. Then $a_1=a_2$, and
since $(e_1, ae_1+e_2)$ is a basis of $G$, we may suppose that $a=
0$. Then $U = e_1^3 e_2^2 (ae_1+e_2) \bigl( (1-a)e_1+e_2 \bigr)$,
and either $(-e_2)e_1^3(ae_1+e_2)$ or $(-e_2)e_1^3\bigl(
(1-a)e_1+e_2 \bigr)$ has a zero-sum subsequence of length greater
than or equal to $4$, a contradiction.

Suppose $S = e_1^3 (a_1e_1+e_2)$. Again we may suppose that $a=0$
and find a zero-sum subsequence of $UV$ of length greater than or
equal to $4$, a contradiction.
\end{proof}

\medskip
\begin{lemma}\label{3.4}
Let $G$ be a finite abelian group with $|G| \ge 3$ and $U \in \mathcal A (G)$ with $|U|=\mathsf D (G)$.
\begin{enumerate}
\item If $g_1,g_2, h\in \supp(U)$, then $g_1\neq 2g_2$, and if $\ord(h)=2$, then $g_1\neq 2g_2+h$.

\smallskip
\item Let $g \in G$ with $\mathsf v_g(U)=k \ge1$ and $\ord(g)>2$ and suppose that $|\supp(U)|\ge2$. Then there exists some $W \in \mathcal A (G)$ such that $W\t (-g)^k g^{-k}U$ and  $|W| \in [3,  \mathsf D(G)-1]$.
\end{enumerate}
\end{lemma}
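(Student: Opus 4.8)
For part (1) I argue by contradiction via length-raising substitutions. Suppose $g_1=2g_2$ with $g_1,g_2\in\supp(U)$; then $g_1\neq g_2$ (else $g_2=0$). Replacing one $g_1$ by $g_2g_2$ produces a zero-sum sequence $\widetilde U=g_1^{-1}g_2^2U$ of length $\mathsf D(G)+1$, which is therefore not an atom. Write $\widetilde U=WX$ with nontrivial zero-sum $W,X$; since $\mathsf v_{g_2}(\widetilde U)=\mathsf v_{g_2}(U)+2\ge 3$, one factor, say $W$, has $\mathsf v_{g_2}(W)\ge2$. Undoing the substitution on $W$ (replace $g_2^2$ by $g_1$) gives a proper nonempty zero-sum subsequence of $U$, contradicting $U\in\mathcal A(G)$.

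For the relation $g_1=2g_2+h$ with $\ord(h)=2$ (the three elements being distinct; note $\mathsf v_h(U)=1$ as $\ord(h)=2$) the same idea uses the substitution $g_1\to g_2^2h$, giving $\widetilde U=g_1^{-1}g_2^2hU$ of length $\mathsf D(G)+2$. Because $\mathsf v_{g_1}$ has dropped, $U$ is not a subsequence of $\widetilde U$, so it suffices to find a nonempty zero-sum $S\t\widetilde U$ with $\mathsf v_{g_2}(S)\le\mathsf v_{g_2}(U)$ and $\mathsf v_h(S)\le1$: then $S\t U$ is a proper zero-sum subsequence. Factoring $\widetilde U$ into atoms and observing that the two copies of $h$ (as $\ord(h)=2$) either sit in different atoms or form an atom $h^2$, a short count on $g_2$-multiplicities produces such an $S$ in every case except $\widetilde U=h^2\cdot U_1$ with $U_1$ an atom; there I would use $\Sigma(U_1)=G$ to find $A\t U_1$ with $\sigma(A)=h$, split the two $h$'s into distinct atoms via $\widetilde U=(hA)(h\,A^{-1}U_1)$, and reduce to the generic case. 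This order-$2$ analysis is the delicate point of (1).

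For part (2) set $T=g^{-k}U$ and $U'=(-g)^kg^{-k}U=(-g)^kT$. Since $\ord(g)>2$, $g$ and $-g$ cannot both lie in $\supp(U)$, so $-g\notin\supp(T)$ and $g\notin\supp(U')$; moreover $T$, being a proper subsequence of the atom $U$, is zero-sum free. As $|U'|=\mathsf D(G)$ exceeds the maximal length $\mathsf D(G)-1$ of a zero-sum free sequence, $U'$ contains an atom $W$, and every atom $W\t U'$ has $|W|\ge3$: a length-$2$ atom would be $x(-x)$ with $x,-x\in\supp(U')$, impossible since $T$ is zero-sum free and $g\notin\supp(U')$. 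It therefore remains only to find such a $W$ with $|W|\le\mathsf D(G)-1$, which follows at once once we know that $U'$ is not itself an atom.

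Proving that $U'$ is not an atom is the heart of the matter and the step I expect to be hardest. If $2kg\neq0$ then $U'$ is not even zero-sum, so assume $\ord(g)=2k$ (whence $kg$ has order $2$ and $\sigma(T)=-kg=kg$) and, for contradiction, $U'\in\mathcal A(G)$. Minimality of $U$ and of $U'$ rules out any $T_0\t T$ with $\sigma(T_0)=-jg$, respectively $\sigma(T_0)=jg$, for $j\in[1,k-1]$; together with $0\notin\Sigma(T)$ and the fact that only $T_0=T$ gives $\sigma(T_0)=kg$, this forces $\Sigma(T)\cap\langle g\rangle=\{kg\}$. Passing to the quotient $\bar G=G/\langle g\rangle$, this says exactly that the image $\bar T$ of $T$ is a minimal zero-sum sequence over $\bar G$, so $\mathsf D(G)-k=|T|=|\bar T|\le\mathsf D(\bar G)$. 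Comparing with the standard bound $\mathsf D(G)\ge\mathsf D(\langle g\rangle)+\mathsf D(\bar G)-1=2k+\mathsf D(\bar G)-1$ yields $k\le1$, contradicting $k\ge2$. Hence $U'$ is not an atom and the desired $W$ exists.
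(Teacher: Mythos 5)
Your proof is correct, but it takes a genuinely different route from the paper's, most notably in part (2). For part (1) the paper simply uses the fact (quoted in Section \ref{2}) that the zero-sum free sequence $g_1^{-1}U$ of length $\mathsf D(G)-1$ satisfies $\Sigma(g_1^{-1}U)=G\setminus\{0\}$: it picks $W \t g_1^{-1}U$ with $\sigma(W)=-g_2$ (resp.\ $-g_2+h$) and converts $W$ into a proper zero-sum subsequence of $U$ by exchanging or inserting the single elements $g_1, g_2, h$; no factorization into atoms is needed. Your insert-and-refactor argument (replace $g_1$ by $g_2^2$, resp.\ $g_2^2h$, and pull a factor of the enlarged sequence back) is sound: the pigeonhole counts you allude to do close every case (if the two $h$'s lie in distinct atoms $A_1,A_2$ and both had $\mathsf v_{g_2}(A_i)>\mathsf v_{g_2}(U)$, then $\mathsf v_{g_2}(A_1)+\mathsf v_{g_2}(A_2)\ge 2\mathsf v_{g_2}(U)+2>\mathsf v_{g_2}(U)+2=\mathsf v_{g_2}(\widetilde U)$, a contradiction; similarly when $h^2$ splits off as an atom and at least two further atoms remain), and your treatment of the exceptional case $\widetilde U=h^2U_1$ via $\Sigma(U_1)=G$ is exactly right --- but this is longer than the paper's three-line check, and you leave those counts to the reader. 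In part (2) both proofs reduce to showing that $U'=(-g)^kg^{-k}U$ is not an atom, and both observe that otherwise $\ord(g)=2k$ with $k\ge 2$; from there the paper chooses $h\in\supp(U)\setminus\{g\}$, extracts $T\t h^{-1}U$ with $\sigma(T)=(k+1)g$ from $\Sigma(h^{-1}U)=G\setminus\{0\}$, and contradicts minimality of $U$ or of $U'$ according as $g\nmid T$ or $g\t T$. You instead show that minimality of $U$ and of $U'$ forces every proper nonempty subsequence of $g^{-k}U$ to have sum outside $\langle g\rangle$, so the image of $g^{-k}U$ in $G/\langle g\rangle$ is an atom, whence $\mathsf D(G)-k\le\mathsf D(G/\langle g\rangle)$; this contradicts the standard subgroup bound $\mathsf D(G)\ge\mathsf D(\langle g\rangle)+\mathsf D(G/\langle g\rangle)-1$ (available in \cite{Ge-HK06a}) as soon as $k\ge 2$. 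The paper's route is self-contained, re-using only the full-subsums property already needed for part (1); yours imports one extra standard tool but yields a cleaner, more conceptual contradiction and avoids the choice of the auxiliary element $h$.
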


\begin{proof}
1. Assume to the contrary that  $g_1=2g_2$.  Since $|U| = \mathsf D (G)$, it follows that $\Sigma (g_1^{-1}U) = G \setminus \{0\}$. Hence there exists some $W \in \mathcal F (G)$ such that $W\t g_1^{-1}U$ and $\sigma (W)=-g_2$. If $g_2\t W$, then $g_1 g_2^{-1}W$ is a proper zero-sum subsequence of $U$, a contradiction.
If $g_2 \nmid W$, then $g_2W$ is a proper zero-sum subsequence of $U$, a contradiction.

Assume to the contrary that $\ord(h)=2$ and  $g_1=2g_2+h$. There exists some $W \in \mathcal F (G)$ such that  $W\t g_1^{-1}U$ and $\sigma(W)=-g_2+h$. If $g_2\t W$, then  $g_1g_2^{-1}W$ is a proper zero-sum subsequence of $U$, a contradiction.
If $g_2\nmid W$ and $h\t W$, then $g_2h^{-1}W$ is a proper zero-sum subsequence of $U$, a contradiction.
If $g_2\nmid W$ and $h\nmid W$, then $g_2hW$ is a proper zero-sum subsequence of $U$, a contradiction.

2. Since $|(-g)^k g^{-k}U|=|U|=\mathsf D(G)$,  there exists some $W \in \mathcal A (G)$ such that $W\t (-g)^kg^{-k}U$. It is easy to show that $|W|\neq 2$. Assume the contrary that  $|W|=\mathsf D(G)$. Then $W=(-g)^k g^{-k}U$ and $\ord(g)=2k>k+1$. Let $h\in \supp(U)\setminus \{g\}$. Since $\Sigma (h^{-1}U) = G \setminus \{0\}$,  there exists some $T \in \mathcal F (G)$ such that $T\t h^{-1}U$ and $\sigma (T)=(k+1)g$. If $g\nmid T$, then $g^{k-1}T$ is a  proper zero-sum subsequence of $U$, a contradiction. Suppose that $g\t T$, say $T=g^tT_0$ with $t \in [1,k]$ and  $T_0\t g^{-k}U$. Then    $W'=(-g)^{k-t+1}T_0$ is a zero-sum subsequence of $W=(-g)^k g^{-k}U$. Since $W$ is an atom, it follows that  $W'=W$. This implies that $T_0=g^{-k}U$, a contradiction to $T_0 \t T \t h^{-1}U$.
\end{proof}

\medskip
\begin{proposition} \label{3.5}
Let $G$ be a finite abelian group with $\mathsf D (G) \ge 5$. Then the following statements are equivalent{\rm \,:}
\begin{enumerate}
\item[(a)] $G$ is isomorphic  to $C_2 \oplus C_{2n}$ with $n \ge 2$.

\smallskip
\item[(b)] There exist  $U, V \in \mathcal A (G)$ with $\mathsf L (UV)= \{2, \mathsf D (G)-1, \mathsf D (G)\}$.
\end{enumerate}
\end{proposition}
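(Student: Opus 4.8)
Throughout write $d = \mathsf D(G)$; the whole argument is organized around a single atom $U$ of maximal length $d$ and the factorizations of $U(-U)$.

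For the implication (a) $\Rightarrow$ (b), let $(e_1,e_2)$ be a basis of $G = C_2 \oplus C_{2n}$ with $\ord(e_1)=2$ and $\ord(e_2)=2n$, so that $d = 2n+1$, and set
\[
U = e_1\,(e_1+e_2)\,e_2^{\,2n-1} \in \mathcal A(G), \qquad V = -U .
\]
One checks at once that $U$ is a minimal zero-sum sequence with $|U| = d$, and that $-U = e_1(e_1-e_2)(-e_2)^{2n-1}$. The inclusion $\{2,\,2n,\,2n+1\} \subseteq \mathsf L(U(-U))$ is witnessed by three explicit factorizations: $U\cdot V$ (length $2$); the pairing of each $g$ with $-g$ (length $d=2n+1$); and the two length-$3$ atoms $e_1(e_1+e_2)(-e_2)$, $e_1(e_1-e_2)e_2$ together with $2n-2$ blocks $e_2(-e_2)$ (length $d-1=2n$). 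For the reverse inclusion I would project along $\psi\colon G \to \langle e_1\rangle$, $\psi(ae_1+be_2)=ae_1$, with $\ker\psi=\langle e_2\rangle$. The four elements of $U(-U)$ outside $\ker\psi$ are $e_1, e_1, e_1+e_2, e_1-e_2$; every atom of a factorization meets this set in an even number of points, and no atom other than $e_1^2$ may contain both copies of $e_1$. This forces one of only two ``matchings'' of these four elements into atoms, and tracking how many copies of $\pm e_2$ each resulting atom must absorb (using $\ord(e_2)=2n$) shows that the achievable lengths are exactly $2,\,2n,\,2n+1$. This bookkeeping is routine but must be carried out completely.

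For (b) $\Rightarrow$ (a), let $U,V \in \mathcal A(G)$ with $\mathsf L(UV) = \{2,d-1,d\}$. Since $d \in \mathsf L(UV)$ there is a factorization $UV = W_1\cdots W_d$ into $d$ atoms; from $|U|+|V| \le 2d$ and $|W_i|\ge 2$ one gets $|U|=|V|=d$ and $|W_i|=2$ for all $i$, so each $W_i = a_i(-a_i)$ and $UV$ is invariant under negation. The first key point is that in fact $V=-U$: as $U$ is an atom of length $d \ge 5$ it contains no zero-sum subsequence of length $2$, so each pair $\{a_i,-a_i\}$ contributes at most one position to $U$; counting $|U|=d$ against the $d$ pairs forces exactly one per pair, whence $V=-U$. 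It then remains to recover $G$ from $\mathsf L(U(-U)) = \{2,d-1,d\}$, which I would do through $\exp(G)$. Since $[3,\ord(g)] \subseteq [3,d-2]$ is disjoint from $\mathsf L(U(-U))$ whenever $\ord(g)\le d-2$, Lemma \ref{3.2}(2) gives $2\,\mathsf v_g(U) \le \ord(g)$ for all such $g$. If $\exp(G)=d$, the bound $1+\sum_i(n_i-1)\le \mathsf D(G)=\exp(G)=n_r$ forces $r=1$, so $G$ is cyclic; then $U=g^d$ for a generator $g$ and a direct computation gives $\mathsf L(U(-U))=\{2,d\}$, contradicting $d-1 \in \mathsf L(U(-U))$. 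If $\exp(G)=d-1$, then $\exp(G)+1=\mathsf D(G)$ yields $G \cong C_2 \oplus C_{n_2}$ with $n_2=d-1$; as $2 \mid n_2$ this is $C_2 \oplus C_{2n}$, which is the assertion.

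The remaining case $\exp(G)\le d-2$ is where I expect the real difficulty. Here every cyclic $K \le G$ has $|K| \le \exp(G) \le d-2$, so $[3,|K|]$ is disjoint from $\mathsf L(U(-U))$; moreover, since $V=-U$, one has $\sum_{h\in K}\mathsf v_h(U(-U)) = 2|U_K|$, where $U_K$ is the part of $U$ supported on $K$. Hence, by Lemma \ref{3.2}(1), a single cyclic subgroup $K$ with $|U_K| > |K|/2$ would already produce a factorization of length in $[3,|K|]\subseteq[3,d-2]$, a contradiction. The task thus reduces to the inverse-additive statement that a minimal zero-sum sequence $U$ of length $\mathsf D(G)$ over a group of exponent at most $\mathsf D(G)-2$ either concentrates more than half of some cyclic subgroup (furnishing such a $K$) or else admits no factorization of $U(-U)$ of length $\mathsf D(G)-1$, the latter again contradicting $d-1 \in \mathsf L(U(-U))$. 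Settling this dichotomy is the technical heart of the proof: it is exactly here that the fine structure of long minimal zero-sum sequences must be used, and in particular where the groups $C_2^{r-1}\oplus C_4$ — the extremal members falling into this case — have to be excluded.
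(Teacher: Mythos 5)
Your opening moves are correct and match the paper's: the deduction that $\mathsf D(G)\in\mathsf L(UV)$ forces $|U|=|V|=\mathsf D(G)$, that every atom in a length-$\mathsf D(G)$ factorization is a pair $a(-a)$ with one element in $U$ and one in $V$, and hence that $V=-U$, is exactly right (the paper asserts this step without proof). Your handling of $\exp(G)=\mathsf D(G)$ (then $G$ is cyclic and $\mathsf L\bigl(U(-U)\bigr)=\{2,\mathsf D(G)\}$) and of $\exp(G)=\mathsf D(G)-1$ (then $G\cong C_2\oplus C_{2n}$ with $n\ge 2$) is also sound, and your construction for (a)$\Rightarrow$(b) is the same as the paper's, with the verification that no further lengths occur left incomplete in both write-ups.

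The proposal is nevertheless not a proof, because the case $\exp(G)\le\mathsf D(G)-2$ is not settled; it is only renamed. The ``dichotomy'' you formulate --- either some cyclic $K$ carries more than $|K|/2$ of $U$, or $U(-U)$ admits no factorization of length $\mathsf D(G)-1$ --- is precisely the assertion (b)$\Rightarrow$(a) for every group in this case, restated, and you offer no argument for it. Lemma \ref{3.2} only supplies the necessary condition $2|U_K|\le |K|$; afterwards one must still exclude each remaining group, e.g. $C_3^3$, $C_4\oplus C_4$, and $C_2^{r-1}\oplus C_4$ with $r\ge 3$ (note that this last family does satisfy $\daleth(G)=\mathsf D(G)-1$, but only through sets $\{2,\mathsf D(G)-1\}$ as in Proposition \ref{3.8}, so any successful argument must be fine enough to distinguish $\{2,\mathsf D(G)-1,\mathsf D(G)\}$ from $\{2,\mathsf D(G)-1\}$). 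This exclusion is where the paper spends essentially its entire proof, by a mechanism absent from your outline: since $\mathsf L\bigl(U(-U)\bigr)=\{2,\mathsf D(G)-1,\mathsf D(G)\}$, every atom $A$ dividing $U(-U)$ has $|A|\in\{2,3,\mathsf D(G)\}$ (a length-$4$ atom would force its complement to be a product of pairs $h(-h)$, whence $A=g_1g_2(-g_1)(-g_2)$, which is not an atom); combining this with Lemma \ref{3.4} shows that every $g\in\supp(U)$ of order $>2$ is a sum $g=f_1+f_2$ of two distinct elements of $\supp(U)$ with $\ord(f_1)>2$ and $\ord(f_2)=2$ (assertions {\bf A1} and {\bf A2} of the paper); then $|\supp(U)|=3$ yields $G=\langle f_1,f_2\rangle\cong C_2\oplus C_{2n}$, while $|\supp(U)|\ge 4$ is ruled out either by producing four length-$3$ atoms whose product divides $U(-U)$, giving a factorization whose length lies outside $\{2,\mathsf D(G)-1,\mathsf D(G)\}$, or by an order/exponent count. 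Some substitute for this structural analysis is indispensable; as it stands, your write-up establishes only the easy implications.
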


\begin{proof}
(a)\, $\Rightarrow$\, (b) \ Let $(e_1, e_2)$ be a basis of $G$ with $\ord (e_1)=2$ and  $\ord(e_2)=2n$ with $n \ge 2$. Then $\mathsf D (G)=2n+1$, $U = e_1e_2^{2n-1}(e_1+e_2) \in \mathcal A (G)$, and $\mathsf L \big( U(-U) \big)=\{2,\mathsf D(G)-1,\mathsf D(G)\}$.

\smallskip
(b)\, $\Rightarrow$\, (a) \
Since $\mathsf D (G) \in \mathsf L (UV)$, it follows that $|U|=|V|=\mathsf D(G)$ and $V=-U$. Let $A \in \mathcal A (G)$ with  $A\t U(-U)$. Then $|A|\in \{2,3,\mathsf D(G)\}$, and  if $|A|=\mathsf D(G)$, then $U(-U) A^{-1}$ is an  atom of length $\mathsf D(G)$.
Since $|U|=\mathsf D (G)$, it follows that $\Sigma (U) =G$. By \cite[Theorem 6.6.3]{Ge-HK06a}, $G$ is neither  cyclic nor an elementary $2$-group.  Therefore,  $|\supp (U)|\ge 2$ and may write
$U=g_1^{k_1} \cdot \ldots \cdot g_s^{k_s}$ with $g_1, \ldots, g_s \in G$ pairwise distinct, $s \ge 2$, $k_1, \ldots, k_s \in \mathbb N$, and $\ord(g_1)>2$.

We continue with the following assertion.

\begin{enumerate}
\item[{\bf A1.}\,] If $\ord{g_i}>2$ for some $i \in [1,s]$, then there exist distinct elements $h_1, h_2 \in \supp (U)$ such that  $g_i=h_1+h_2$.

\item[{\bf A2.}\,] If $\ord{g_i}>2$ for some $i \in [1,s]$, then there exist distinct elements $f_1, f_2 \in \supp (U)$ such that  $g_i=f_1+f_2$, $\ord (f_1)>2$, and $\ord (f_2)=2$.
\end{enumerate}

\smallskip
{\it Proof of \,{\bf A1}}.\, Let $i \in [1,s]$ with $\ord (g_i)>2$. By Lemma \ref{3.4}.2, there exists an atom $W \in \mathcal A (G)$ such that  $W \t g_i^{-k_i}(-g_i)^{k_i}U$ and $|W| \in [3, \mathsf D(G)-1]$. Then $W \t U(-U)$ infers $|W|=3$. Thus $W=(-g_i)^2h_1$ with $h_1\t U$ or  $W=(-g_i)h_1h_2$ with $h_1h_2\t U$. In the first case $h_1=2g_i$, a contradiction to Lemma \ref{3.4}.1. Thus the second case holds, and again  Lemma \ref{3.4}.1 implies that   $h_1$ and $h_2$ are distinct. \qed

\smallskip
{\it Proof of \,{\bf A2}}.\, Let $i \in [1,s]$ with $\ord (g_i)>2$, say $i=1$. By {\bf A1}, we may assume that $g_1=g_2+g_3$ and hence
  $\ord(g_2)>2$ or $\ord(g_3)>2$.
Assume to the contrary that $\ord(g_2)>2$ and $\ord(g_3)>2$. Again by {\bf A1}, there are distinct $h_1, h_2 \in \supp (U)$ such that $g_2=h_1+h_2$. Then $\{h_1,h_2\}\cap \{g_1,g_3\}\neq \emptyset$ otherwise $(-g_1)h_1h_2g_3$ would be an atom of length $4$ dividing $U(-U)$. Since $g_2=g_1+h_i$ with $i \in [1,2]$ cannot hold, we obtain that $g_2=g_3+h$ with $h \in \{h_1, h_2\}$. Repeating the argument we infer that  $g_3=g_2+h'$ with $h' \in \supp (U)$. It follows that $h+h'=0$ which implies that $h=h'$, $\ord(h)=2$, and $g_1=2g_3+h$,  a contradiction to Lemma \ref{3.4}.1. \qed

\smallskip
Since $\ord(g_1)>2$,  by {\bf A2} we may assume that   $g_1=g_2+g_3$ with $\ord(g_2)>2$ and  $\ord(g_3)=2$.
If $s=3$, then $G = \langle \supp (U) \rangle = \langle g_1, g_2, g_3 \rangle = \langle g_2, g_3 \rangle \cong C_2\oplus C_{2n}$ with  $n \ge 2$.
Assume to the contrary  that  $s\ge4$. We distinguish two cases.

\smallskip
\noindent
CASE 1: \, There is an $i \in [4,s]$ such that $\ord (g_i) > 2$, say $i=4$.

By {\bf A2}, we may assume  $g_4=h_1+h_2$ with $\ord(h_1)>2$ and $\ord(h_2)=2$. We assert that $\{g_1,g_2,g_3\}\cap \{g_4, h_1,h_2\}=\emptyset$. Assume to the contrary that this does not hold. Taking into account the order of the elements and that $|\{g_1, \ldots, g_4\}|=4$ we have to consider the following three cases:
\begin{itemize}
\item If $h_2=g_3$, then $g_1+g_4=g_2+h_1$ and hence $(-g_1)(-g_4)g_2h_1$ is an atom of length $4$ dividing $U(-U)$, a contradiction.

\item If $h_1=g_1$, then $(-g_4)g_2g_3h_2$ is an atom of length $4$ dividing $U(-U)$, a contradiction.

\item If $h_1=g_2$, then $(-g_4)g_1g_3h_2$ is an atom of length $4$  dividing $U(-U)$, a contradiction.
\end{itemize}
Thus we have $\{g_1,g_2,g_3\}\cap \{g_4, h_1,h_2\}=\emptyset$. Obviously,  $(-g_1)g_2g_3$, $g_1(-g_2)(-g_3)$, $(-g_4)h_1h_2$, and $g_4(-h_1)(-h_2)$ are atoms of length $3$ dividing $U(-U)$, and therefore their product
$(-g_1)g_2g_3\cdot g_1(-g_2)(-g_3)\cdot (-g_4)h_1h_2\cdot g_4(-h_1)(-h_2)$ divides  $U(-U)$, a contradiction to $\mathsf L \big( U(-U) \big)= \{2, \mathsf D (G)-1, \mathsf D (G)\}$.

\smallskip
\noindent
CASE 2: \,  $\ord(g_i)=2$ for all $i\in [4,s]$.

If $k_1\ge 2$ and $k_2\ge 2$, then $(-g_1)^2g_2^2$ is an atom of length $4$ dividing $U(-U)$, a contradiction. Thus $k_1=1$ or $k_2=1$, say $k_1=1$. Since $g_1+k_2g_2+g_3+\ldots+g_s=\sigma (U)=0$ and $(k_2+1)g_2=g_4+\ldots+g_s$,
it follows that $\ord (g_2)=2(k_2+1)$. Thus
\[
\exp (G)+s-3 \le \mathsf D (G)=|U|=k_2+s-1 \le 2(k_2+1)+s-4\le \exp (G)+s-4 \,,
\]
a contradiction.
\end{proof}

\medskip
\begin{proposition} \label{3.6}
Let $G$ be a finite abelian group with $\mathsf D (G) \ge 5$. Then there are no  $U, V \in \mathcal A (G)$ with $\mathsf L (UV)= \{2, \mathsf D (G)-1 \}$ and $|U|=|V|=\mathsf D (G)$.
\end{proposition}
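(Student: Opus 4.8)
The plan is to assume, for contradiction, that such $U,V$ exist and to manufacture a factorization of $UV$ whose length lies in $[3,\mathsf D(G)-2]$, which is excluded by $\mathsf L(UV)=\{2,\mathsf D(G)-1\}$. First I record the easy facts: $0\nmid UV$, and $V\neq -U$, since $V=-U$ would give the factorization $UV=\prod_{g\t U}g(-g)$ of length $\mathsf D(G)\notin\{2,\mathsf D(G)-1\}$. Next I set $T=\gcd(U,-V)$ and write $U=TR$, $-V=TS$ with $\gcd(R,S)=1$, so that $UV=T(-T)\cdot R(-S)$, where $T(-T)$ is a product of $|T|$ length-$2$ atoms $g(-g)$. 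Since $U$ and $V$ are atoms of length $\ge 3$, neither contains an antipodal pair, so every length-$2$ atom dividing $UV$ uses one term of $U$ and one of $V$; hence it divides $T(-T)$. As the length-$(\mathsf D(G)-1)$ factorization has excess $2$ over the all-length-$2$ count and so contains such an atom, we get $T\neq 1$, and $V\neq -U$ forces $R\neq 1$.

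The heart of the argument is the following reduction. Because $R$ and $-S$ are zero-sum free and $\gcd(R,S)=1$, the sequence $R(-S)$ has \emph{no} length-$2$ atom: such an atom would force a common term of $R$ and $S$. From $UV=T(-T)R(-S)$ we obtain $\mathsf L(R(-S))+|T|\subseteq \mathsf L(UV)=\{2,\mathsf D(G)-1\}$, so $\mathsf L(R(-S))\subseteq\{2-|T|,\,|R|-1\}$. If $|T|=1$ then, since $R(-S)$ has length $2(\mathsf D(G)-1)>\mathsf D(G)$ and is not an atom, $\mathsf L(R(-S))=\{\mathsf D(G)-2\}$; but all atoms of $R(-S)$ have length $\ge 3$, forcing $3(\mathsf D(G)-2)\le 2(\mathsf D(G)-1)$, i.e. $\mathsf D(G)\le 4$, a contradiction. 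Hence $|T|\ge 2$ and $\mathsf L(R(-S))=\{|R|-1\}$; the same length count gives $3(|R|-1)\le 2|R|$, so $|R|\le 3$, while $|R|=1$ is impossible as $\gcd(R,S)=1$ forbids $R(-S)=r(-s)$ from being zero-sum with $r\neq s$. Thus $|R|=|S|\in\{2,3\}$ and $T$ is a zero-sum free sequence of length $\mathsf D(G)-|R|\ge 2$ shared by the two maximal atoms $U$ and $-V$, with $\sigma(R)=\sigma(S)=-\sigma(T)$ and $r_i\neq s_j$ for all terms.

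It remains to treat the two residual configurations $|R|\in\{2,3\}$. Here the plan is to re-pair two of the trivial transpositions $t_1(-t_1),t_2(-t_2)$ (with $t_1t_2\t T$) together with $R(-S)$ into \emph{one fewer} atom, producing a factorization of $UV$ of length $\mathsf D(G)-2\in[3,\mathsf D(G)-2]$ and the desired contradiction. For $|R|=2$, if $t_1,t_2$ can be chosen with $t_1-t_2=s_1-r_1$ and avoiding the finitely many antipodal coincidences among $\{r_1,r_2,s_1,s_2\}$, then $t_1r_1(-t_2)(-s_1)$ and $t_2r_2(-t_1)(-s_2)$ are both atoms and replace the three atoms $t_1(-t_1)t_2(-t_2)\cdot R(-S)$ by two; the case $|R|=3$ is handled by an analogous re-pairing of $R(-S)$ with one transposition. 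I expect the \textbf{main obstacle} to be guaranteeing the existence of admissible $t_1,t_2$: this will use that $T$ is long, so that $Tr_1\cdots r_{|R|-1}$ has length $\mathsf D(G)-1$ and hence subsum set $G\setminus\{0\}$, together with Lemma~\ref{3.4} to exclude the degenerate subconfigurations. The sequences $T$ admitting no admissible pair are essentially the (near-)constant ones $T=h^{\mathsf D(G)-|R|}$, which force $\ord(h)$ to be at least $\mathsf D(G)-2$ and thus $G$ to be cyclic or of the form $C_2\oplus C_{\exp(G)}$; these few groups of large exponent I would dispatch directly, noting in particular that for cyclic $G$ the product $UV$ has no length-$2$ atom at all unless $V=-U$.
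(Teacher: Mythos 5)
Your opening reduction is correct, and it is in fact a cleaner route to the paper's starting point than the paper's own bookkeeping: the facts $V\neq -U$, every length-$2$ atom of $UV$ divides $T(-T)$, $T\neq 1$, then $|T|\ge 2$, $\mathsf L\bigl(R(-S)\bigr)=\{|R|-1\}$ and $3(|R|-1)\le 2|R|$, hence $|R|\in\{2,3\}$, all check out, and they recover exactly the two configurations the paper opens with (one atom of length $4$ plus length-$2$ atoms, resp.\ two atoms of length $3$ plus length-$2$ atoms). The genuine gap is everything after that, and it sits precisely where the paper does its real work (assertions {\bf A1}--{\bf A8} and Cases 1.1, 1.2, 2.1, 2.2, several pages of analysis). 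Your contradiction hinges on finding terms $t_1,t_2$ of $T$ with $t_1-t_2=s_1-r_1$, and nothing you invoke produces such a pair: $\Sigma(Tr_1)=G\setminus\{0\}$ is a statement about sums of subsequences, not about the difference set of the terms of $T$; the two are unrelated. Worse, the fallback claim --- that a $T$ admitting no admissible pair must be (near-)constant, forcing $G$ cyclic or $C_2\oplus C_{\exp(G)}$ --- is false. Take $G=C_2^r$ with $r\ge 4$ (elementary $2$-groups are not excluded by $\mathsf D(G)\ge 5$ and must be handled): the configurations there are $U=t_1\cdots t_l r_1r_2$, $V=t_1\cdots t_l s_1s_2$ with $(t_1,\ldots,t_l,r_1)$ a basis, $r_2=r_1+t_1+\cdots+t_l$, $s_1=r_1+\sum_{i\in I}t_i$, $s_2=r_1+\sum_{i\in[1,l]\setminus I}t_i$ for some $\emptyset\neq I\subsetneq [1,l]$. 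A pair with $t_j+t_k=s_1+r_1$ exists if and only if $|I|=2$; for $|I|\neq 2$ there is no admissible pair, although $T=t_1\cdots t_l$ consists of distinct independent elements and is as far from constant as possible. Eliminating these configurations needs a different idea (the paper does it at the end of its CASE 2.1, via the atoms $A_1=s_1r_1\prod_{i\in I}t_i$ and $A_2$, whose lengths are incompatible with the allowed factorization lengths), and your proposal contains no substitute for it.

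Three further unresolved points. First, even when an admissible pair exists, your two length-$4$ sequences must \emph{both} be atoms or \emph{both} split; the mixed case (say $t_1=s_1$ and $r_1=t_2$, which can occur because a term of $R$ may also appear in $T$, while the other sequence stays an atom) yields a factorization of length $\mathsf D(G)-1$ and hence no contradiction, and you have not shown such coincidences can be avoided. Second, ``dispatching directly'' $C_2\oplus C_{2n}$ is not a remark but a genuine sub-problem: over that group $U=e_1e_2^{2n-1}(e_1+e_2)$ and $-V=e_2^{2n-1}(e_1+2e_2)(e_1-e_2)$ satisfy every constraint of your reduction, with $T=e_2^{2n-1}$ constant and $R(-S)$ an atom of length $4$, so that $\{2,\mathsf D(G)-1\}\subset \mathsf L(UV)$; excluding equality requires exhibiting a third length (one exists, e.g.\ length $3$ through the atom $e_1(-e_2)^{2n-2}(e_1-2e_2)$), which is work of exactly the kind the proposition demands, not a corollary of ``large exponent''. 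Third, the case $|R|=3$ is waved off as ``analogous'', but splitting $t(-t)R(-S)$, where $R(-S)$ is a product of two length-$3$ atoms, into two atoms imposes different sum conditions that you never state. In short: you have a correct and pleasant reduction to the paper's two cases, but the elimination of those cases --- the actual content of Proposition \ref{3.6} --- is missing.
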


\begin{proof}
Assume to the contrary that $U, V \in \mathcal A (G)$ with these properties do exist.
If   $A\in \mathcal A(G)$ with $A\t UV$, then  $|A|\in\{2,3,4,\mathsf D(G)\}$.
Since $\Sigma (U) = \Sigma (V)=G$ and $G$ cannot be cyclic, it follows that $|\supp (U)|\ge 2$ and $|\supp (V)|\ge 2$.
We distinguish two cases.

\smallskip
\noindent
CASE 1:\,  For all  $A\in \mathcal A(G)$ with $A \t UV$ we have that $|A|\in\{2,3,\mathsf D(G)\}$.

Then $UV$ has a factorization of the form  $UV=W_1 \cdot \ldots \cdot W_{\mathsf D(G)-1}$ where $W_1, \ldots, W_{\mathsf D(G)-1} \in \mathcal A (G)$,  $|W_{\mathsf D(G)-2}|=|W_{\mathsf D(G)-1}|=3$, and all the other $W_i$ have length $2$. Thus we may set
\[
U=g_1 \cdot\ldots \cdot g_l h_1h_4h_5 \quad \text{and} \quad V=(-g_1)\cdot \ldots \cdot  (-g_l)(-h_2)(-h_3)(-h_6) \,,
\]
where $l \in \mathbb N$, $g_1, \ldots, g_l, h_1, \ldots, h_6 \in G$ not necessarily distinct such that $h_1=h_2+h_3$, $h_6=h_4+h_5$ and $\{h_1,h_4,h_5\}\cap \{h_2,h_3,h_6\}=\emptyset$.

Since $V(-g_6)^{-1}$ is a zero-sum free sequence of length $\mathsf D(G)-1$, there exists a subsequence $T$ of  $V(-g_6)^{-1}$ such that $\sigma (T)=h_4$. Thus $Th_5(-h_6)$ is a zero-sum subsequence of $UV$.
Since $T(-h_6)$ is zero-sum free, we obtain that $Th_5(-h_6)$ is an atom of length $|Th_5(-h_6)|\in \{3,\,\mathsf D(G)\}$. If $|Th_5(-h_6)|=3$, then $|T|=1$ which implies that $h_4\t V$. If $|Th_5(-h_6)|=\mathsf D(G)$, then $Th_5(-h_6)=V$ which implies that $h_5\t V$. Therefore, we obtain that $h_4\in \supp(U)\cap \supp(V)$ or $h_5\in \supp(U)\cap \supp(V)$.
Without loss of generality, we  assume that $h_4\in \supp(U)\cap \supp(V)$.

We start with the following assertions.

\begin{enumerate}
\item[{\bf A1.}\,] $\ord (h_4) > 2$, $(-2h_4)^2\t U$, and $(-2h_4)^2\t V$.

\item[{\bf A2.}\,] If $g_i\not\in \{h_1,\ldots,h_6\}$  for some $i\in [1,l]$, then $\ord(g_i)=2$.

\end{enumerate}

\smallskip
{\it Proof of \,{\bf A1}}.\, Since  $-g_i\neq h_4$ for all $i\in [1,l]$, we obtain that $h_4\in \{-h_2,-h_3,-h_6\}$ which implies that  $\ord(h_4)> 2$.

By the symmetry of $U$ and $V$, we only need to prove that $(-2h_4)^2\t U$.
For any $g\in \supp(U)$ with $g\neq h_4$, consider the sequence $Ug^{-1}h_4$. Since $|Ug^{-1}h_4|=\mathsf D(G)$ and $Ug^{-1}h_4\t UV$, there exists a atom $A$ with $A\t Ug^{-1}h_4 $ and $|A|=3 $ by Lemma \ref{3.4}.2. Note that $h_4^2\t A$. Therefore, $-2h_4\t Ug^{-1}$.
If $\ord(h_4)=3$, then $h_4=-2h_4$ and $A=h_4^3$ which implies that $h_4^2\t U$.
If $\ord(h_4)>3$, then $-2h_4\neq h_4$. Thus we can choose $g=-2h_4$ which implies that $(-2h_4)^2\t U$.  \qed

\smallskip
{\it Proof of \,{\bf A2}}.\, Let $i \in [1,l]$ such that $g_i\not\in \{h_1,\ldots,h_6\}$ and  assume to the contrary that $\ord(g_i)>2$. Let $v=\mathsf v_{g_i}(U)=\mathsf v_{-g_i}(V)$. By Lemma \ref{3.4}.2,  we obtain that there exists an atom $W$ such that $W\t g_i^{-v}(-g_i)^vU$ and $|W|=3$. By Lemma \ref{3.4}.1, $\mathsf v_{-g_i}(W)=1$. Thus there exist $f_1, f_2\in \supp(U)$ such that $g_i=f_1+f_2$. Obviously, $f_1f_2\nmid g_1\cdot \ldots \cdot  g_l$. If $f_1\t g_1\cdot \ldots \cdot g_l$ and $f_2\t h_1h_4h_5$, then $f_2=g_i+(-f_1)$. Since $f_2$ is an element of an atom of length 3, we can substitute $f_2$ by $g_i(-f_1)$ to get an atom of length $4$, a contradiction. Therefore, $f_1f_2\t h_1h_4h_5$. If $f_1f_2=h_4h_5$, then $g_i=h_6$, a contradiction. By the symmetry of $h_4$ and $h_5$, we only need to consider $f_1f_2=h_1h_4$. Since $(-g_i)h_1h_4$ is an atom, $g_ih_5(-h_2)(-h_3)(-h_6)$ can only be a product of two atoms which implies that $g_i\in \{h_2,h_3,h_6\}$, a contradiction.    \qed

\smallskip
We continue with the following two subcases.

\smallskip
 \noindent
CASE 1.1:\, $\ord(h_4)>3$.

By {\bf A1}, we have $(-2h_4)^2\t U$ and $(-2h_4)^2\t V$.
Thus, for any $i \in [1,l]$, $g_i \ne -2h_4$ and $-g_i \ne -2h_4$. Therefore, we obtain that $h_1=h_5= -2h_4$ and $-h_6=h_4$, and hence $-h_2=-h_3=-2h_4$.
 Then $-2h_4=h_1=h_2+h_3=4h_4$ which implies that $\ord(h_4)=6$.
By {\bf A2}, we obtain that $U=g_1\cdot \ldots \cdot g_l h_4 (-2h_4)^2$ with $\ord(g_1)=\ldots =\ord(g_l)=2$ and $\ord(h_4)=6$. Since $\mathsf D(G)\ge5 $, we get $l\ge 2$. Then $6+l-1=\exp(G)+l-1\le \mathsf D(G)=|U|=l+3$, a contradiction

\smallskip
 \noindent
CASE 1.2:\,  $\ord(h_4)=3$.

By {\bf A1}, $h_4^2\t U$ and $h_4^2\t V$. Thus $h_4^4\t h_1h_4h_5(-h_2)(-h_3)(-h_6)$. But $h_4^6\neq  h_1h_4h_5(-h_2)(-h_3)(-h_6)$. Without loss of generality, we can assume that $h_4=h_5=-h_3=-h_6$, $h_1\neq h_4$, and $-h_2\neq h_4$.

Suppose $\mathsf v_{h_1}(U)>1$ and $\mathsf v_{-h_2}(V)>1$. Then $-h_1\t V$ and $h_2\t U$. Since $h_1+(-h_2)+h_4=0$, then $(-h_1)h_2h_4^2$ is an atom of length $4$ and divides $UV$, a contradiction.
Thus, by symmetry, we may assume that $\mathsf v_{-h_2}(U)=1$. Then $h_2\nmid U$.
Gathering the $g_i$'s, which are equal to $h_1$ and renumbering if necessary we obtain that
$U=g_1\cdot \ldots \cdot g_s h_1^v h_4^2$ with $s \in [0,l]$, $v=\mathsf v_{h_1}(U)$, $\ord(g_1)=\ldots=\ord(g_s)=2$ (by {\bf A2}), and $\ord(h_4)=3$.

If $s=0$, then $vh_1=h_4$ and $G=\langle h_1\rangle$, a contradiction.

Suppose $s\ge 1$.
Since $2\sigma (U)=2vh_1+4h_4=0$, we obtain that $2h_4=2vh_1$ and $6vh_1=0$.
Since $(-h_1)^{v-1}h_4^2 \t V$, we obtain that $(-h_1)^{v-1}h_4^2$ is zero-sum free and, for any $j \in [v+1, 2v]$, we have $j h_1 \in \Sigma \big( (-h_1)^{v-1}h_4^2 \big)$. This implies that $\ord(h_1)>2v$.
Thus $6v/\ord(h_1)<3$ which implies that $\ord(h_1)=3v$ or $6v$. But $\sigma (h_1^vh_4^2)=3vh_1\neq 0$, hence $\ord(h_1)=6v=\exp(G)$.
Therefore,
\[
\exp(G)+s-1\le \mathsf D(G)=|U|=s+v+2\le 6v+s-3\le \exp(G)+s-3 \,,
\]
a contradiction.

\bigskip
\noindent
CASE 2:\, There exists an $A\in \mathcal A(G)$ with $A\t UV$ and $|A|=4$.

Then $UV$ has a factorization of the form
$UV=W_1 \cdot \ldots \cdot  W_{\mathsf D(G)-1}$ where $W_1, \ldots, W_{\mathsf D(G)-1} \in \mathcal A (G)$,  $|W_{\mathsf D(G)-1}|=4$, and all the other $W_i$ have length $2$.
Conversely, note  that for any $A'\in \mathcal A(G)$ with $A'\t UV$ and $|A'|=4$, $UV(A')^{-1}$ can only be a product of atoms of length $2$ because $\mathsf D(G)\ge 5$.
We start with the following assertion.

\begin{enumerate}
\item[{\bf A3.}\,]  If $g\in \supp(U)\cap \supp(V)$, then $\ord(g)=2$.
\end{enumerate}

\smallskip
{\it Proof of \,{\bf A3}}.\, Let $U=g_1\cdot \ldots \cdot g_lh_1h_2$ and $V=(-g_1)\cdot \ldots \cdot (-g_l) h_3h_4$ where $l \in \mathbb N$, $g_1, \ldots, g_l, h_1, \ldots, h_4 \in G$ not necessarily distinct and $W_{\mathsf D(G)-1}=h_1h_2h_3h_4$.
Let $g\in \supp(U)\cap \supp(V)$. If $g = g_i$ for some $i \in [1,l]$, then $g \t V$, $-g \t V$, and hence $g=-g$. If $g = -g_i$ for some $i \in [1,l]$, then $g \t U$, $-g \t U$, and hence $g=-g$.

It remains to consider the case where $g \in \{h_1, h_2\} \cap \{h_3, h_4\}$, say
$h_1=h_3=g$. We  will show that this is not possible.
 Assume this holds, choose an $h \in \supp (U) \setminus \{g\}$, and consider the sequence $X = h^{-1}U h_3$. Since $|X|=\mathsf D(G)$,  there exists an atom $A \in \mathcal A (G)$ such that $A\t X$  and $|A| \in [3,4]$. Note that   $h_1h_3\t A$.

Suppose that $|A|=3$. Then there exists $h'\in \supp ( U)$ such that $h'=-h_1-h_3=-2g$. If $h'=h_2$, then $h_1h_2h_3$ ia a proper zero-sum sequence of $W_{\mathsf D(G)-1}$, a contradiction. Otherwise, $h' \in \{g_1, \ldots, g_l\}$, hence  $-h'=2g \in \supp( V)$, a contradiction to $h_3=g \in \supp ( V)$ (recall Lemma \ref{3.4}).

Suppose that $|A|=4$. If $h_2 \t A$, then (note that $h_1+h_2+h_3+h_4=0$) $A=h_1h_2h_3h_4$ and $-h_4 \t V$, a contradiction.  Thus $h_2 \nmid A$, and the similar argument shows that
$h_4\nmid A$. This implies that $A=g_ig_jh_1h_3$ with $i,j \in [1,l]$. Then $A$ and $A'=(-g_i)(-g_j)h_2h_4$ are two atoms of length $4$ dividing $UV$, a contradiction.
\qed

\medskip
Clearly, there are precisely two possibilities for $U$ and $V$ which will be discussed in the following two subcases.

 \smallskip
 \noindent
CASE 2.1:\, $
U=g_1^{k_1}\cdot \ldots \cdot g_l^{k_l}h_1^{r_1-1}h_2^{r_2-1}h_3^{r_3-1}h_4^{r_4-1}h_1h_2 \quad \text{ and}$
\[
  V=(-g_1)^{k_1}\cdot \ldots \cdot (-g_l)^{k_l}(-h_1)^{r_1-1}(-h_2)^{r_2-1}(-h_3)^{r_3-1}(-h_4)^{r_4-1}(-h_3)(-h_4) \,, \ \text{where}
\]
$k_1, \ldots, k_l,r_1,\ldots, r_4 \in \mathbb N$, $g_1, \ldots , g_l, h_1,\ldots,h_4$ are pairwise distinct, and $h_1h_2(-h_3)(-h_4) = W_{\mathsf D(G)-1}$.

 We start with the following assertions.

\begin{enumerate}
\item[{\bf A4.}\,] For each $g_i \in \{g_1,\ldots, g_l\}$ with $\ord (g_i)>2$, we have that $g_i=h_1+h_2$.

\item[{\bf A5.}\,] $r_1=r_2=r_3=r_4=1$.

\item[{\bf A6.}\,] $\ord(g_i)=2$ for all $i \in [1,l]$.
\end{enumerate}

\smallskip
{\it Proof of \,{\bf A4}}.\,  Let $i \in [1,l]$ such that  $\ord(g_i)>2$, say $i=1$.
Then, by Lemma \ref{3.4}.2,  there  exists an atom $A \in \mathcal A (G)$ such that   $A\t (-g_1)^{k_1}g_1^{-k_1}U$ and $|A| \in [3,4]$. We distinguish four subcases depending on the multiplicity of $\mathsf v_{-g_1} (A)$ and on $|A|$.

Suppose that $\mathsf v_{-g_1}(A)=3$.  Then $|A|=4$. Since $A\nmid UV(W_{\mathsf D(G)-1})^{-1}$, we must have that $h_1\t A$ or $h_2\t A$. Then $A=(-g_1)^3h_i$ with $i\in [1,2]$, say $i=1$. It follows that  $A'=g_1^3h_2(-h_3)(-h_4)$ is a zero-sum subsequence of $UV(A)^{-1}$ which implies that $A'$ is a product of atoms of length $2$, a contradiction.

Suppose that $\mathsf v_{-g_1}(A)=2$.  Then $|A|=4$ by Lemma \ref{3.4}.1. Since $A\nmid UV(W_{\mathsf D(G)-1})^{-1}$, we must have that $h_1\t A$ or $h_2\t A$.
If $h_1h_2\t A$, then $A=(-g_1)^2h_1h_2$, and hence  $A'=g_1^2(-h_3)(-h_4)$ is an  atom of length $4$ and divides $UVA^{-1}$, a contradiction.
By symmetry,  we may assume that $h_1\t A$ and $h_2\nmid A$. Thus  $A = (-g_1)^2h_1h$, where  $h \in \{g_2,\ldots,g_l,h_1,h_3,h_4\}$ and $(-h)(-h_3)(-h_4)\t V$. Therefore $A'=g_1^2 h_2(-h_3)(-h_4)(-h)$ is  a zero-sum subsequence of $UVA^{-1}$ which implies that $A'$ is a product of three atoms of length $2$,  a contradiction.

Suppose that $\mathsf v_{-g_1}(A)=1$ and  $|A|=4$. Since $A\nmid UV(W_{\mathsf D(G)-1})^{-1}$, we must have that $h_1\t A$ or $h_2\t A$.  If $h_1h_2\t A$, then $A=(-g_1)h_1h_2h$, where   $h \in \{g_2,\ldots,g_l,h_1,h_2,h_3,h_4\}$ and $(-h)(-h_3)(-h_4)\t V$. Hence  $A'=g_1(-h_3)(-h_4)(-h)$ is an  atom of length $4$ and divides  $UVA^{-1}$, a contradiction.
By symmetry,  we may assume that $h_1\t A$ and $h_2\nmid A$. Thus $A=(-g_1)h_1hh'$ where  $h, h' \in \{g_2,\ldots,g_l,h_1,h_3,h_4\}$ and $(-h)(-h')(-h_3)(-h_4)\t V$.  Thus $A'=g_1(-h)(-h')h_2(-h_3)(-h_4)$ is a  zero-sum subsequence of $UVA^{-1}$ which implies that  $A'$ is a product of three atoms of length $2$,  a contradiction.

Suppose that $\mathsf v_{-g_1}(A)=1$ and  $|A|=3$. Since $A\nmid UV(W_{\mathsf D(G)-1})^{-1}$, we must have that $h_1\t A$ or $h_2\t A$. If $h_1h_2\nmid A$, by symmetry we may assume that $h_1\t A$ and $h_2\nmid A$. Thus $A=(-g_1)h_1h$, where $h\in \{g_2,\ldots,g_l,h_1,h_3,h_4\}$ and $(-h)(-h_3)(-h_4)\t V$. It follows that  $A'= g_1(-h)h_2(-h_3)(-h_4)$ is a zero-sum subsequence of $UVA^{-1}$ which implies that $A'$ is a product of two atoms, a contradiction.
Therefore, $h_1h_2\t A$ and $g_1=h_1+h_2$. \qed

\smallskip
{\it Proof of \,{\bf A5}}.\, By symmetry it is sufficient to show that $r_3=1$. Assume to the contrary  that $r_3\ge2$. We proceed in several steps.

{\bf (i)} In the first step we will show that $h_3=g_i+h_1$ for some $i \in [1,l]$.

By Lemma \ref{3.4}.2, there  exists an atom $A \in \mathcal A (G)$ such that  $A\t h_3^{-(r_3-1)}(-h_3)^{r_3-1}U$ and $|A|\in[3,4]$. We distinguish four subcases depending on the multiplicity of $\mathsf v_{-g_1} (A)$ and on $|A|$.

Suppose that $\mathsf v_{-h_3}(A)=3$.  Then $|A|=4$. Since $A\nmid UV(W_{\mathsf D(G)-1})^{-1}$, we must have that $h_1\t A$ or $h_2\t A$. Then $A=(-h_3)^3h_i$ with $i\in [1,2]$, say $i=1$. Therefore    $A'=h_3^2h_2(-h_4)$ is an atom of length $4$ and divides $UVA^{-1}$, a contradiction.

Suppose that  $\mathsf v_{-h_3}(A)=2$.  Then $|A|=4$ by Lemma \ref{3.4}.1. Since $A\nmid UV(W_{\mathsf D(G)-1})^{-1}$, we must have that $h_1\t A$ or $h_2\t A$.
If $h_1h_2\t A$, then $A=(-h_3)^2h_1h_2$ which implies that $h_3=h_4$, a contradiction.
By symmetry,  we may assume that $h_1\t A$ and $h_2\nmid A$. Thus  $A = (-h_3)^2h_1h$, where  $h \in \{g_1,\ldots,g_l,h_1,h_4\}$ and $(-h)(-h_3)(-h_4)\t V$. Therefore $A'=h_3^2 h_2(-h_3)(-h_4)(-h)$ is  a zero-sum subsequence of $UVA^{-1}$ which implies that $A'$ is a product of three atoms of length $2$,  a contradiction.

Suppose that $\mathsf v_{-h_3}(A)=1$ and  $|A|=4$. Since $A\nmid UV(W_{\mathsf D(G)-1})^{-1}$, we must have that $h_1\t A$ or $h_2\t A$.  If $h_1h_2\t A$, then $A=(-h_3)h_1h_2h$ with $h\in \{g_1,\ldots,g_l,h_1,h_2,h_4\}$ and $-h\t V$.   Hence $h=-h_4$ and $-h=h_4\t V$, a contradiction.
By symmetry,  we may assume that $h_1\t A$ and $h_2\nmid A$. Thus $A=(-h_3)h_1hh'$ where  $h, h' \in \{g_1,\ldots,g_l,h_1,h_4\}$ and $(-h)(-h')(-h_3)(-h_4)\t V$.  Thus $A'=(-h)(-h')h_2(-h_4)$ is an atom of length $4$ and divides $UVA^{-1}$,  a contradiction

Suppose that $\mathsf v_{-h_3}(A)=1$ and  $|A|=3$. Since $A\nmid UV(W_{\mathsf D(G)-1})^{-1}$, we must have that $h_1\t A$ or $h_2\t A$.
If $h_1h_2\t A$, then $A=(-h_3)h_1h_2$ which implies that $h_4=0$, a contradiction. Thus, by symmetry, we may assume that $h_1\t A$ and $h_2\nmid A$. Then $A=(-h_3)h_1h$, where $h\in \{g_1,\ldots,g_l,h_1,h_4\}$.
By Lemma \ref{3.4}.1, we obtain that $h\not\in \{h_1,h_4\}$.
Therefore, $h_3=h_1+g_i$ for some $i\in [1,l]$.

\smallskip
{\bf (ii)} In the second step we will show that $\ord (g_j)=2$ for all $j \in [1,l]$ and $r_1=1$.

Assume to the contrary that there is a $j \in [1,l]$ such that
$\ord(g_j)>2$. Then $g_j=h_1+h_2$ by {\bf A4} and hence  $A_1=g_j(-h_3)(-h_4)$ and $A_2=g_i(-h_3)h_1$ are two atoms of length $3$ and divide $UV$. It follows that $UV(A_1A_2)^{-1}$ is a product of atoms of length $2$, but $\mathsf v_{h_2}(UV(A_1A_2)^{-1})=\mathsf v_{h_2}(UV)>\mathsf v_{-h_2}(UV)=\mathsf v_{-h_2}(UV(A_1A_2)^{-1}) $, a contradiction.

Assume to the contrary that  $r_1\ge 2$. Since $\ord(g_i)=2$ and $h_3=h_1+g_i$, we obtain that  $h_1^2(-h_3)^2$ and $(-h_1)h_3g_i$ are two atoms and divide $UV$, a contradiction.

\smallskip
{\bf (iii)} In the third step we show that $\ord (h_3)=4r_3$.

Consider the sequence $X=U(h_3^{r_3-1}h_1)^{-1}(-h_3)^{r_3}$.
Since $|X|=\mathsf D(G)$, there exists an atom $A\in\mathcal A(G)$ with $A\t X$ and  $|A|\in \{3,4,\mathsf D(G)\}$. We distinguish three subcases depending on $|A|$.

Suppose that $|A|=3$.  Since $A\nmid UV(W_{\mathsf D(G)-1})^{-1}$ and $r_1=1$,  we must have that $h_2\t A$. Thus  $A=(-h_3)h_2h$, where $h\in  \{g_1,\ldots,g_l,h_2,-h_3\}$. By Lemma \ref{3.4}.1, $h\not\in \{h_2,-h_3\}$. Therefore, $A$ and $A'=(-h_3)h_1g_i$ are two atoms of length $3$ and divide $UV$. It follows that $UV(AA')^{-1}$ is a product of  atoms of length $2$ but $\mathsf v_{h_4}(UV(AA')^{-1})<\mathsf v_{-h_4}(UV(AA')^{-1})$, a contradiction.

Suppose that $|A|=4$. Then $UVA^{-1}$ is a product of atoms of length $2$, but $r_1=1=\mathsf v_{h_1}(UVA^{-1})>\mathsf v_{-h_1}(UVA^{-1})=0$, a contradiction.

Suppose that  $|A|=\mathsf D(G)$. Then $A=X$ and hence $h_1=-(2r_3-1)h_3$. By steps {\bf (i)} and {\bf (ii)}, we obtain that $g_i=2r_3h_3$ and  $4r_3h_3=0$. Since $Uh_1^{-1}$ is zero-sum free and for each $j\in [1,2r_3-1]$, $jh_3\in \Sigma (Uh_1^{-1})$, then $\ord(h_3)>2r_3$ which implies that  $\ord(h_3)=4r_3$.

\smallskip
{\bf (iv)} In the final  step we will obtain a contradiction to our assumption that $r_3 \ge 2$.
Clearly, similar arguments as given in the  steps {\bf (i)},{\bf (ii)}, and {\bf (iii)} show that  $r_2\ge 2$ implies that  $\ord(h_2)=4r_2$,  and that  $r_4\ge 2$ implies that $\ord(h_4)=4r_4$.
We proceed with the following four subcases depending on $r_2$ and $r_4$.

Suppose that  $r_2\ge 2$ and  $r_4\ge 2$. Since $h_3=h_1+g_i$ and $\ord(g_i)=2$, we obtain that $h_2=h_4+g_i$ and hence $2h_2=2h_4$. Therefore, $(-h_2)h_4g_i$ and $h_2^2(-h_4)^2$ are two atoms and divide $UV$, a contradiction.

Suppose that  $r_2=r_4=1$.  Then $U=g_1\cdot \ldots \cdot g_lh_3^{r_3-1}h_1h_2$ and
\[
\exp(G)+l-1\le\mathsf D(G)=l+r_3+1< l-1+4r_3\le \exp(G)+l-1 \,,
\]
a contradiction.

Suppose that $r_2=1$ and $r_4\ge2$. Then $U=g_1\cdot \ldots \cdot g_lh_3^{r_3-1}h_4^{r_4-1}h_1h_2$ and
\[
\exp(G)+l-1\le\mathsf D(G)=l+r_3+r_4<l-1+ 4\max\{r_3,r_4\}\le\exp(G)+l-1 \,,
\]
a contradiction.

Suppose that $r_4=1$ and $r_2\ge2$. Then $U=g_1\cdot \ldots \cdot g_lh_3^{r_3-1}h_2^{r_2-1}h_1h_2$ and
\[
\exp(G)+l-1\le\mathsf D(G)=l+r_3+r_2<l-1+ 4\max\{r_2,r_3\}\le\exp(G)+l-1 \,,
\]
a contradiction.   \qed

\smallskip
{\it Proof of \,{\bf A6}}.\,  Assume the contrary  that there is an $i \in [1,l]$ such that $\ord(g_i)>2$, say $i=1$. Then {\bf A4} implies that $g_1=h_1+h_2$. Since $g_1, \ldots, g_l$ are pairwise distinct, it follows that $\ord (g_2)= \ldots = \ord (g_l)=2$ and $k_2= \ldots = k_l=1$. Thus $U = g_1^{k_1}g_2 \cdot \ldots \cdot g_lh_1h_2$, $0=\sigma (U)=k_1g_1+g_2+\ldots + g_l+g_1$, whence $\ord ( (k_1+1)g_1)=2$ and $\ord(g_1)=2(k_1+1)\ge 4$.
It follows that
\[
\exp(G)+l-1\le\mathsf D(G)=|U|=k_1+l+1\le \frac{\ord (g_1)}{2}+l \le \ord(g_1)+l-2< \exp(G)+l-1 \,,
\]
a contradiction.   \qed

\medskip
Now by {\bf A4, A5}, and {\bf A6}, $U$ has the form $U=g_1\cdot \ldots \cdot g_l h_1h_2$ with $\ord(g_i)=2$ for each $i \in [1,l]$.
If $\exp(G)\ge 4$, then
\[
\exp(G)+l-1\le \mathsf D(G)=|U|=l+2<4+l-1 \,,
\]
a contradiction. Thus $G$ must be an elementary $2$-group. Since $(g_1,\ldots,g_l,h_1)$ is a basis of $G$, then $h_2=g_1+\ldots+g_l+h_1$ and $h_1+h_2=h_3+h_4=g_1+\ldots  +g_l$. We can assume that $h_3=h_1+\sum_{i\in I}g_i$ for some $\emptyset \ne I\subsetneq [1,l]$ and $h_4= h_1+\sum_{i\in [1,l]\setminus I}g_i$. Then $A_1=h_3h_1\prod_{i\in I}{g_i}$ and $A_2=h_1h_4\prod_{i\in [1,l]\setminus I}g_i$ are two atoms of lengths $|A_1|, |A_2| \in [3,  \mathsf D (G)-1]$. If $i\in [1,2]$ and $|A_i|\ge4$, then $UVA_i^{-1}$ has to be a product of atoms of length $2$, a contradiction.  Thus it follows that $|A_1|=|A_2|=3$ whence $l=2$ which implies that $\mathsf D (G)=4$, a contradiction.

\medskip
 \noindent
CASE 2.2:\, $U=g_1^{k_1}\cdot \ldots \cdot g_l^{k_l}h^{r-2}h_3^{r_3-1}h_4^{r_4-1}h^2 \ \text{ and}$
\[
  V=(-g_1)^{k_1}\cdot \ldots \cdot (-g_l)^{k_l}(-h)^{r-2}(-h_3)^{r_3-1}(-h_4)^{r_4-1}(-h_3)(-h_4) \,, \quad   \text{where}
\]
$k_1, \ldots, k_l,r_1,\ldots, r_4 \in \mathbb N$, $g_1, \ldots , g_l, h, h_3, h_4$ are pairwise distinct with the only possible exception that $h_3=h_4$ may hold, and $h^2(-h_3)(-h_4) = W_{\mathsf D(G)-1}$.

We start with the following assertions.
\begin{enumerate}
\item[{\bf A7.}\,] For each $i \in [1,l]$  we have  $\ord(g_i)=2$.

\item[{\bf A8.}\,] $h_3=h_4$.

\end{enumerate}

\smallskip
{\it Proof of \,{\bf A7}}.\, Assume to the contrary that there is an $i \in [1,l]$, say $i=1$, such that $\ord (g_1)>2$.

Then, by Lemma \ref{3.4}.2,  there  exists an atom $A \in \mathcal A (G)$ such that   $A\t (-g_1)^{k_1}g_1^{-k_1}U$ and $|A| \in [3,4]$. Since $A\nmid UV(W_{\mathsf D(G)-1})^{-1}$, we must have that $h\t A$.  We distinguish four subcases depending on the multiplicity of $\mathsf v_{-g_1} (A)$ and on $|A|$.

Suppose that $\mathsf v_{-g_1}(A)=3$.  Then $|A|=4$ and $A=(-g_1)^3h$ . It follows that  $A'=g_1^3h(-h_3)(-h_4)$ is a zero-sum subsequence of $UV(A)^{-1}$ which implies that $A'$ is a product of atoms of length $2$, a contradiction.

Suppose that $\mathsf v_{-g_1}(A)=2$.  Then $|A|=4$ by Lemma \ref{3.4}.1.
If $h^2\t A$, then $A=(-g_1)^2h^2$, and hence  $A'=g_1^2(-h_3)(-h_4)$ is an  atom of length $4$ and divides $UVA^{-1}$, a contradiction.
If  $h^2\nmid A$, we obtain that   $A = (-g_1)^2hf$, where  $f \in \{g_2,\ldots,g_l,h_3,h_4\}$ and $(-f)(-h_3)(-h_4)\t V$. Therefore $A'=g_1^2 h(-h_3)(-h_4)(-f)$ is  a zero-sum subsequence of $UVA^{-1}$ which implies that $A'$ is a product of three atoms of length $2$,  a contradiction.

Suppose that $\mathsf v_{-g_1}(A)=1$ and  $|A|=4$.
  If $h^2\t A$, then $A=(-g_1)h^2f$, where   $f \in \{g_2,\ldots,g_l,h,h_3,h_4\}$ and $(-f)(-h_3)(-h_4)\t V$. Hence  $A'=g_1(-h_3)(-h_4)(-f)$ is an  atom of length $4$ and divides  $UVA^{-1}$, a contradiction.
If $h^2\nmid A$, then $A=(-g_1)hff'$ where  $f, f' \in \{g_2,\ldots,g_l,h_3,h_4\}$ and $(-f)(-f')(-h_3)(-h_4)\t V$.  Thus $A'=g_1(-f)(-f')h(-h_3)(-h_4)$ is a  zero-sum subsequence of $UVA^{-1}$ which implies that  $A'$ is a product of three atoms of length $2$,  a contradiction.

Suppose that $\mathsf v_{-g_1}(A)=1$ and  $|A|=3$.
If $h^2\t A$, then $g_1=2h$, a contradiction to Lemma \ref{3.4}.1.
 If $h^2\nmid A$,  then $A=(-g_1)hf$, where $f\in \{g_2,\ldots,g_l,h_3,h_4\}$ and $(-f)(-h_3)(-h_4)\t V$. It follows that  $A'= g_1(-f)h(-h_3)(-h_4)$ is a zero-sum subsequence of $UVA^{-1}$ which implies that $A'$ is a product of two atoms, a contradiction.
 \qed

\smallskip
{\it Proof of \,{\bf A8}}.\, Assume to the contrary that $h_3\neq h_4$.
If $r_3=r_4=1$, then $U=g_1\cdot \ldots \cdot g_l h^r$ with $l\ge 1$, $\ord(h)=2r\ge 4$, and hence
\[
\exp(G)+l-1\le \mathsf D(G)=|U|=l+r\le 2r+l-4<\exp(G)+l-1 \,,
\]
a contradiction. Thus after renumbering if necessary we may assume that  $r_3\ge 2$. We will show this is impossible.

By Lemma \ref{3.4}, there  exists an atom $A \in \mathcal A (G)$ such that $A\t h_3^{-(r_3-1)}(-h_3)^{r_3-1}U$ and $|A|\in[3,4]$. Since $A\nmid UV(W_{\mathsf D(G)-1})^{-1}$, we must have that $h\t A$.  We distinguish four subcases depending on the multiplicity of $\mathsf v_{-h_3} (A)$ and on $|A|$.

Suppose that $\mathsf v_{-h_3}(A)=3$.  Then $|A|=4$ and $A=(-h_3)^3h$ . It follows that  $A'=h_3^2h(-h_4)$ is an atom and divides $UV(A)^{-1}$, a contradiction.

Suppose that $\mathsf v_{-h_3}(A)=2$.  Then $|A|=4$ by Lemma \ref{3.4}.1.
If $h^2\t A$, then $A=(-h_3)^2h^2$ which implies that $h_3=h_4$, a contradiction.
If  $h^2\nmid A$, we obtain that   $A = (-h_3)^2hf$, where  $f \in \{g_1,\ldots,g_l,h_4\}$ and $(-f)(-h_3)(-h_4)\t V$. Therefore $A'=h_3 h(-h_4)(-f)$ is  an atom and divides $UVA^{-1}$,  a contradiction.

Suppose that $\mathsf v_{-h_3}(A)=1$ and  $|A|=4$.
  If $h^2\t A$, then $A=(-h_3)h^2f$, where   $f \in \{g_1,\ldots,g_l,h,h_4\}$ and $(-f)(-h_3)(-h_4)\t V$. Hence $h_4=2h-h_3=-f\t V$, a contradiction.
If $h^2\nmid A$, then $A=(-h_3)hff'$ where  $f, f' \in \{g_1,\ldots,g_l,h_4\}$ and $(-f)(-f')(-h_3)(-h_4)\t V$.  Thus $A'=(-f)(-f')h(-h_4)$ is an atom and divides $UVA^{-1}$,  a contradiction.

Suppose that $\mathsf v_{-h_3}(A)=1$ and  $|A|=3$. Since $h^2(-h_3)(-h_4)$ is an atom, we obtain that  $h^2\nmid A$, and  hence $A=(-h_3)hf$, where $f\in \{g_1,\ldots,g_l,h_4\}$ and $(-f)(-h_3)(-h_4)\t V$. If $f=h_4$, then $h=2h_4$ which implies a contradiction to Lemma \ref{3.4}.1(recall $f=h_4\t U$ and $h=2h_4\t U$). If $f\neq h_4$, by {\bf A7} $\ord(f)=2$ and hence $2h=2h_3$ which implies that $h_3=h_4$, a contradiction. \qed

\medskip
Now, by {\bf A7} and {\bf A8}, $U$ and $V$ have the form
\[
U=g_1\cdot \ldots \cdot g_l h_3^{r_3-2}h^r \quad  \text{and} \quad  V=g_1\cdot \ldots \cdot g_l (-h)^{r-2}(-h_3)^{r_3} \,,
\]
where $l\ge 0$,    $r_3\ge 2$,  $r\ge 2$, $g_1, \ldots , g_l, h_3,h\in G$ are pairwise distinct and $ 2h=2h_3 $.

If $r_3=2$, then $U=g_1\cdot \ldots \cdot g_l h^r$, $\ord(h)=2r$, and
\[
\exp(G)+l-1\le\mathsf D(G)=|U|=l+r\le 2r+l-2 \,,
\]
a contradiction. Considering $V$ and assuming $r=2$ we end again up at a contradiction.
Therefore we obtain that $r_3\ge 3$ and $r \ge 3$.

If $r_3\ge 4$ and $r\ge 4$, then $h^2(-h_3)^2$ and $(-h)^2h_3^2$ are two atoms of length $4$ and divide $UV$, a contradiction.

Thus by symmetry, we may assume that $r_3=3$ and $r\ge 3$.

Suppose that $l=0$. Then $\sigma (U)=h_3+rh=0$ which implies that $G=\langle h \rangle $, a contradiction.

Suppose that $l\ge 1$. Then $2h_3+2rh=0$ which implies that $2(r+1)h=0$.
Thus $\ord(h)=2(r+1)$ or $\ord(h)=r+1$.
If $\ord(h)=2(r+1)$, then
\[
\exp(G)+l-1\le \mathsf D(G)=|U|=l+1+r<l-1+2(r+1)\le \exp(G)+l-1\,,
\]
 a contradiction.
If $\ord(h)=r+1$, then $h=h_3+g_1+\ldots+g_l$ and hence $(-h)h_3g_1\cdot\ldots\cdot g_l$ is an atom and divides $UV(h^2(-h_3)^2)^{-1}$, a contradiction
\end{proof}

\medskip
\begin{proposition} \label{3.7}
Let $G$ be a finite abelian group with $\mathsf D(G) \ge 5$.
Then the following statements are equivalent{\rm \,:}
\begin{enumerate}
\item[(a)] $G$ is either an elementary $2$-group,  or a cyclic group, or isomorphic  to $C_2 \oplus C_{2n}$ with $n \ge 2$.

\smallskip
\item[(b)] There exist  $U, V \in \mathcal A (G)$ with $\mathsf L (UV)= \{2, \mathsf D (G)-1\}$ and $|U|-1=|V|=\mathsf D (G)-1$.
\end{enumerate}
\end{proposition}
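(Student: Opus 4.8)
The plan is to prove the two implications separately. The implication (a) $\Rightarrow$ (b) is constructive: I would exhibit an explicit pair $U,V$ in each of the three families. If $G = C_n$ is cyclic with generator $g$ and $n = \mathsf D (G) \ge 5$, take $U = g^n$ and $V = (-g)^{n-2}(-2g)$. If $G$ is an elementary $2$-group with basis $(e_1, \ldots, e_r)$, where $r = \mathsf D (G)-1 \ge 4$, set $e_0 = e_1 + \ldots + e_r$ and take $U = e_1 \cdot \ldots \cdot e_r e_0$ and $V = e_1 \cdot \ldots \cdot e_{r-1}(e_0 + e_r)$. If $G = C_2 \oplus C_{2n}$ with basis $(e_1, f)$, $\ord (e_1) = 2$, $\ord (f) = 2n$, take $U = e_1 (e_1+f) f^{2n-1}$ and $V = (-f)^{2n}$. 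In each case $|U| = \mathsf D (G)$ and $|V| = \mathsf D (G)-1$, and I would verify $\mathsf L (UV) = \{2, \mathsf D (G)-1\}$ by listing the atoms dividing $UV$ — these turn out to be only the length-$2$ atoms, a single length-$3$ atom, and $U, V$ themselves — and noting that one element of multiplicity one (namely $-2g$, $e_0 + e_r$, and $e_1$, respectively) forces every factorization to be either $U \cdot V$ or the product of that length-$3$ atom with $\mathsf D (G)-2$ length-$2$ atoms.

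For (b) $\Rightarrow$ (a) the first step is a reduction on the atoms dividing $UV$. Since $|UV| = 2\mathsf D (G)-1$, any factorization $UV = W_1 \cdot \ldots \cdot W_{\mathsf D (G)-1}$ satisfies $\sum_i (|W_i|-2) = 1$, so it consists of exactly $\mathsf D (G)-2$ atoms of length $2$ and one atom of length $3$. Hence every atom $A \mid UV$ with $A \notin \{U,V\}$ lies in a factorization which, not being $U \cdot V$, has length $\mathsf D (G)-1$, whence $|A| \in \{2,3\}$; moreover $U$ and $V$ are the unique atoms of lengths $\mathsf D (G)$ and $\mathsf D (G)-1$ dividing $UV$. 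In particular no length-$4$ atom divides $UV$, which is precisely the feature that makes this situation more tractable than the two-length-$3$-atom configuration of Proposition \ref{3.6}. Fixing a length-$3$ atom $A$ from a maximal factorization, the complement $T = UV A^{-1}$ is a product of length-$2$ atoms, hence symmetric, so that $\mathsf v_g (UV) - \mathsf v_{-g}(UV) = \mathsf v_g (A) - \mathsf v_{-g}(A)$ for every $g \in G$.

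Next I would run a case analysis modelled on Propositions \ref{3.5} and \ref{3.6}. If $G$ is cyclic or an elementary $2$-group then (a) holds, so assume it is neither; then $|\supp (U)| \ge 2$ (otherwise $U = g^{\ord (g)}$ and $G = \langle g \rangle$ is cyclic) and $\supp (U)$ contains an element of order $>2$ (otherwise $\langle \supp (U) \rangle = G$ is elementary $2$). Writing $U = g_1^{k_1} \cdot \ldots \cdot g_s^{k_s}$, I would use Lemma \ref{3.4}.2 to produce, for each $g_i$ of order $>2$, a short atom dividing a modification of $U$; since every atom dividing $UV$ other than $U,V$ has length $2$ or $3$, it has length $3$, and Lemma \ref{3.4}.1 excludes the shapes $g_i = 2g'$ and $g_i = 2g'+h$ with $\ord (h)=2$. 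As in assertion A2 of Proposition \ref{3.5}, this should force each $g_i$ of order $>2$ to be a sum $f_1 + f_2$ with $\ord (f_1)>2$ and $\ord (f_2)=2$, and to pin down the multiplicities. The argument would then close with the ``length versus exponent'' squeeze used throughout Section \ref{3}: one deduces that some element has order $2(k+1)$ or $\exp (G)$, and substitutes this into a chain $\exp (G) + l - 1 \le \mathsf D (G) = |U| < \exp (G) + l - 1$ (with $l$ the number of order-$2$ elements in $\supp (U)$), ruling out every possibility except $G \cong C_2 \oplus C_{2n}$.

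The hardest part, as flagged in the introduction, is that for groups of rank $\ge 3$ (and for rank-$2$ groups with $n_1 \ge 3$) no normal form for the maximal atom $U$ of length $\mathsf D (G)$ is available, so one cannot simply quote a structure theorem as one can in rank $2$ via \cite{Ga-Ge-Gr10a,Sc10b,Re15c}. The further difficulty relative to Propositions \ref{3.5} and \ref{3.6} is that here $V \ne -U$ in general, so the negatives needed to apply Lemma \ref{3.4} must be harvested from the symmetric complement $T$, and one must track carefully which of the three elements of $A$ come from $U$ and which from $V$. I therefore expect the complete proof to be a finite but intricate branching on the orders and multiplicities of the elements of $\supp (U)$, each branch terminated by the exponent–length inequality.
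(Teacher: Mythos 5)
Your direction (a)\,$\Rightarrow$\,(b) is fine: the three constructions are exactly the ones the paper uses (your cyclic example $V=(-g)^{n-2}(-2g)$ is the correct form), and your opening observations for (b)\,$\Rightarrow$\,(a) — that any factorization of length $\mathsf D(G)-1$ consists of exactly one atom of length $3$ and $\mathsf D(G)-2$ atoms of length $2$, so that $UVA^{-1}$ is symmetric and $U,V$ have the normal form $U=g_1^{k_1}\cdots g_s^{k_s}$, $V=(-g_1-g_2)(-g_1)^{k_1-1}(-g_2)^{k_2-1}(-g_3)^{k_3}\cdots(-g_s)^{k_s}$ — also match the paper's setup. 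The problems come right after that. Your claim that every atom $A$ dividing $UV$ with $A\notin\{U,V\}$ has $|A|\in\{2,3\}$ (equivalently, that $U\cdot V$ is the \emph{unique} factorization of length $2$) is a non sequitur as argued: a factorization containing $A$ is indeed not the factorization $U\cdot V$, but nothing prevents it from having length $2$, namely $A\cdot(UVA^{-1})$ with both factors atoms of lengths $\mathsf D(G)$ and $\mathsf D(G)-1$. The condition $\mathsf L(UV)=\{2,\mathsf D(G)-1\}$ only yields $|A|\in\{2,3,\mathsf D(G)-1,\mathsf D(G)\}$, which is all the paper claims. This is not a cosmetic point: the paper's proof repeatedly has to confront atoms of length $\mathsf D(G)-1$ or $\mathsf D(G)$ different from $U$ and $V$ (its assertion A1 even \emph{replaces} $(U,V)$ by a different pair $(U',V')$ with $U'V'=UV$ to arrange $g_1'+g_2'\notin\supp(U')$, and its assertions A3 and A4 analyze exactly such long atoms), so the uniqueness you assert cannot be had for free; ruling those configurations out is a substantial part of the work you have skipped.

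The second, larger gap is that the core of (b)\,$\Rightarrow$\,(a) is only a plan, and the plan is miscalibrated. You propose to rerun assertions A1/A2 of Proposition 3.5, i.e.\ to show each $g_i$ of order $>2$ is a sum $f_1+f_2$ with $\ord(f_1)>2$, $\ord(f_2)=2$. That argument needs $\mathsf v_{-g_i}(V)\ge \mathsf v_{g_i}(U)$ so that the atom produced by Lemma 3.4.2 from $(-g_i)^{k_i}g_i^{-k_i}U$ actually divides $UV$; here this fails for $g_1,g_2$ (one has $\mathsf v_{-g_1}(V)=k_1-1$), which you acknowledge but do not repair. The paper's actual route is structurally different: it proves the \emph{complementary} statement $(\supp(U)+\supp(U))\cap(\supp(U)\setminus\{g_1,g_2\})=\emptyset$ (no element outside $\{g_1,g_2\}$ is a sum, because a length-$3$ atom $(-h)g_ig_j$ would force, via symmetry of the complement, $h=g_1+g_2\in\supp(U)$), then derives the relations $-2k_ig_i\in\{g_1,g_2\}$ with the corresponding multiplicity equal to $1$ (A3) and the constraints of A4, and finally splits into three cases according to $|\{i\in[3,s]:\ord(g_i)>2\}|\in\{0,1,\ge 2\}$, each closed by a different exponent--rank--length squeeze (these need care: the bound $\exp(G)+l-1\le\mathsf D(G)$ requires establishing rank/independence, which you also gloss over). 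So the statement is true and your reduction is on the right track, but the proof as proposed both contains an invalid step and leaves the decisive case analysis undone.
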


\begin{proof}
(a)\, $\Rightarrow$\, (b) \  Suppose that $G$ is an elementary $2$-group and let $(e_1, \ldots, e_r)$ be a basis of $G$ with $\ord (e_1)=\ldots = \ord (e_r)=2$. Then $\mathsf D (G)=r+1$,  $U= e_1 \cdot \ldots \cdot e_{r}e_0 \in \mathcal A (G)$, where $e_0=e_1+ \ldots + e_{r}$,  $V=e_1 \cdot \ldots \cdot e_{r-1}(e_0+e_r) \in \mathcal A (G)$,  and $\mathsf L \big( UV \big) = \{2,r\}$.

Suppose that $G$ is cyclic, and let $e \in G$ with $\ord (e)=|G|=\mathsf D (G)$. Then $U=e^{|G|} \in \mathcal A (G)$, $V=(-e)^{|G|-1}(-2e) \in \mathcal A (G)$, and $\mathsf L \big( UV \big) = \{2, |G|-1\}$.

Suppose that $G$ is isomorphic to $C_2 \oplus C_{2n}$ with $n \ge 2$, and let
$(e_1, e_2)$ be a basis of $G$ with $\ord (e_1)=2$ and  $\ord(e_2)=2n$. Then $\mathsf D (G)=2n+1$, $U = e_1e_2^{2n-1}(e_1+e_2) \in \mathcal A (G)$, $V = (-e_2)^{2n} \in \mathcal A (G)$, and $\mathsf L(UV)=\{2, 2n \}$.

\smallskip
(b)\, $\Rightarrow$\, (a) \ Assume to the contrary that $G$ is neither an elementary $2$-group, nor a cyclic group, nor  isomorphic  to $C_2 \oplus C_{2n}$ for any $n \ge 2$. Then $\mathsf D(G)>\exp(G)+1$.

Let $A\in \mathcal A(G)$ with $A\t UV$. Then we have $|A|\in\{2,3,\mathsf D(G)-1,\mathsf D(G)\}$.
Furthermore, if $|A|=3$, then $UVA^{-1}$ is a product of atoms of length $2$, and if $|A| \in [ \mathsf D(G)-1, \mathsf D(G)]$, then $UVA^{-1} \in \mathcal A (G)$.

Since $\mathsf L (UV)= \{2, \mathsf D (G)-1\}$ and $|U|-1=|V|=\mathsf D (G)-1$,  $U$ and $V$ have the form
\[
U =g_1^{k_1} \cdot \ldots \cdot g_s^{k_s} \quad \text{and} \quad
V =(-g_1-g_2)(-g_1)^{k_1-1}(-g_2)^{k_2-1}(-g_3)^{k_3}\cdot \ldots \cdot (-g_s)^{k_s} \,,
\]
where $s, k_1, \ldots, k_s \in \mathbb N$, $g_1,\ldots, g_s \in G$ are pairwise distinct with the only possible exception that  $g_1=g_2$ may hold.

Since $G$ is not cyclic, we have $s \ge 2$. Suppose that $s=2$.
If $g_1=g_2$, then $\mathsf D(G)=k_1+k_2=\ord(g_1)$ which implies that $G$ is a cyclic group, a contradiction.
Suppose that $g_1\neq g_2$. Since $\mathsf D(G)>\exp(G)+1$,
 Lemma \ref{3.2}.2 implies that  $\mathsf v_{g_1}(U)+\mathsf v_{-g_1}(V)=k_1+k_1-1\le \ord(g_1)$ and $\mathsf v_{g_2}(U)+\mathsf v_{-g_2}(V)=k_2+k_2-1\le \ord(g_2)$. Then
\[
\mathsf D(G)=k_1+k_2\le \frac{\ord(g_1)+1}{2}+\frac{\ord(g_2)+1}{2}\le 1+\exp(G)<\mathsf D(G) \,,
\]
a contradiction.

Thus from now on we suppose that $s \ge 3$, and we continue with the following  assertion.

\begin{enumerate}
\item[{\bf A1.}\,] There exist atoms $U', V' \in \mathcal A (G)$ such that $UV=U'V'$, say
\[
U'=g_1'^{k_1'} \cdot \ldots \cdot g_{s'}'^{k_{s'}'} \quad \text{and} \quad
V'=(-g_1'-g_2')(-g_1')^{k_1'-1}(-g_2')^{k_2'-1}(-g_3')^{k_3'}\cdot \ldots \cdot (-g_{s'}')^{k_{s'}'} \,,
\]
where $s', k_1', \ldots, k_{s'}' \in \mathbb N$, $g_1',\ldots, g_{s'}' \in G$ are pairwise distinct with the only possible exception that  $g_1'=g_2'$ may hold, and $g_1'+g_2'\not\in \supp(U')$.
\end{enumerate}

\noindent
\smallskip
{\it Proof of \,{\bf A1}}.\, If $g_1+g_2\not\in \supp(U)$, then we can choose $U'=U$ and $V'=V$.

Suppose $g_1+g_2\in \supp(U)$, say $g_3=g_1+g_2$. Then $\mathsf v_{-g_3}(V)=k_3+1 \ge 2$ and hence $\ord(g_3)>2$. By Lemma \ref{3.4}.1 and $|U|=\mathsf D(G)$, it follows that $g_1\neq g_2$. We claim that  $k_1=1$ or $k_2=1$.
Indeed, if $k_1\ge 2$ and  $k_2\ge 2$, then $A=g_1g_2(-g_3)$ is an atom  and $A^2\t UV$, a contradiction.

Suppose $s=3$. Without loss of generality, we can assume that $k_1=1$. Since $\mathsf D(G)>\exp(G)+1$,
 Lemma \ref{3.2}.2 implies that  $\mathsf v_{g_2}(U)+\mathsf v_{-g_2}(V)=k_2+k_2-1\le \ord(g_2)$ and $\mathsf v_{g_3}(U)+\mathsf v_{-g_3}(V)=k_3+k_3+1\le \ord(g_3)$. Then
\[
\mathsf D(G)=|U|=1+k_2+k_3\le 1+\frac{\ord(g_2)+1}{2}+\frac{\ord(g_3)-1}{2}\le 1+\exp(G)<\mathsf D(G) \,,
\]
a contradiction.

Thus we obtain that $s\ge4$.
Since
$g_3^{-1}U$ is a zero-sum free sequence of length $\mathsf D(G)-1$, there exists a subsequence $T_1$ of $ g_3^{-1}U$ such that $\sigma(T_1)=(k_3+1)g_3$. Therefore, $(-g_3)^{k_3+1}T_1$ is a zero-sum sequence.  Thus $T_1$ has the form $T_1=g_3^tT_2$ with $t \in [0, k_3-1]$ and $T_2\t g_1^{k_1}g_2^{k_2}g_4^{k_4}\cdot \ldots \cdot g_s^{k_s}$.
Then $(-g_3)^{k_3+1-t}T_2$ is a zero-sum sequence without zero-sum subsequences of length $2$ which implies that $(-g_3)^{k_3+1-t}T_2$ is an atom of length $|(-g_3)^{k_3+1-t}T_2|\in\{3,\mathsf D(G)-1,\mathsf D(G)\}$. Since $k_3+1-t\ge2$, $(-g_3)^{k_3+1-t}T_2$ cannot be an atom of length $3$ by Lemma \ref{3.4}.1, hence $t=0$, $T_1=T_2$, and  $(-g_3)^{k_3+1}T_1$ can only be an atom of length $\mathsf D(G)-1$ or $\mathsf D(G)$. It follows that  $((-g_3)^{k_3+1}T_1)^{-1}UV$ is also an atom and hence $g_4^{k_4}\cdot \ldots \cdot g_s^{k_s}\t T_1$.

Thus any sequence $T$ with $T \t g_3^{-1}U$ and $\sigma (T)=(k_3+1)g_3$ has the property that $g_4^{k_4}\cdot \ldots \cdot g_s^{k_s}\t T$.

We continue with the following two subcases depending on $|(-g_3)^{k_3+1}T_1|$.

 Suppose $|(-g_3)^{k_3+1}T_1|=\mathsf D(G)-1$. Since $((-g_3)^{k_3+1}T_1)^{-1}UV$ is  an atom, we obtain that $k_1=k_2=1$ and  $T_1=(g_1g_2g_3^{k_3})^{-1}U$ which implies that $2(k_3+1)g_3=0$.
Since
$g_4^{-1}U$ is a zero-sum free sequence of length $\mathsf D(G)-1$, there exists a subsequence $W_1$ of $ g_4^{-1}U$ such that $\sigma(W_1)=(k_3+2)g_3$. If $g_3\nmid W_1$, then $g_3^{k_3}W_1$ is a proper zero-sum subsequence of $U$, a contradiction.
If $g_3\t W_1$,  then $g_3^{-1}W_1\t g_3^{-1}U$ and $\sigma (g_3^{-1}W_1)=(k_3+1)g_3$,
but $g_4^{k_4}\cdot \ldots \cdot g_s^{k_s}\nmid g_3^{-1}W_1$, a contradiction.

Suppose $|(-g_3)^{k_3+1}T_1|=\mathsf D(G)$. Since $((-g_3)^{k_3+1}T_1)^{-1}UV$ is an atom, we obtain that $\big(k_1=1$ and $T_1=(g_1g_3^{k_3})^{-1}U\big)$ or  $\big(k_2=1$ and $T_1=(g_2g_3^{k_3})^{-1}U\big)$. By symmetry, we may assume that $k_1=1$, $T_1=(g_1g_3^{k_3})^{-1}U$, and hence $g_1=(-2k_3-1)g_3$. Choose
\[\begin{aligned}
U'&=(-g_3)^{k_3+1}T_1 =g_2(-g_3)g_2^{k_2-1}(-g_3)^{k_3}g_4^{k_4}\cdot \ldots \cdot g_s^{k_s} \quad \text{and} \\
 V'&=((-g_3)^{k_3+1}T_1)^{-1}UV =g_1(-g_2)^{k_2-1}g_3^{k_3}(-g_4)^{k_4}\cdot \ldots \cdot (-g_s)^{k_s} \,,
 \end{aligned}
\]
then $U',V'$ are two atoms with  $U'V'=UV$ and $g_2+(-g_3)=-g_1\not\in \supp (U')$. \qed

\medskip

Thus from now on we may assume that $g_1+g_2\not\in \supp(U)$, and recall that $s \ge 3$. We continue with three further assertions.

\begin{enumerate}
\item[{\bf A2.}\,]  $(\supp(U)+\supp(U))\cap (\supp(U)\setminus \{g_1,g_2\})=\emptyset$.

\item[{\bf A3.}\,]   Let $i\in [3,s]$ with $\ord(g_i)>2$. Then $ \big( -2k_ig_i=g_1$ and $k_1=1 \big)$ or $\big( -2k_ig_i=g_2$ and $k_2=1 \big)$.

\item[{\bf A4.}\,]   If $k_1=1$, then $\big(g_2=-2g_1$ and $k_2=1 \big)$ or $\big(2g_1+2g_2=0$ and $k_2=1 \big)$.
 If $k_2=1$, then $\big(g_1=-2g_2$ and $k_1=1 \big)$ or $\big(2g_1+2g_2=0$ and $k_1=1\big)$.

\end{enumerate}

\smallskip
{\it Proof of \,{\bf A2}}.\, Assume to the contrary that there exists an element $h\in (\supp(U)+\supp(U))\cap (\supp(U)\setminus \{g_1,g_2\})$. Thus  there exist $i, j \in [1, s]$ such that $g_i+g_j = h$ with $h \in \{g_3, \ldots, g_s\}$. Lemma \ref{3.4}.1 implies that $g_i \ne g_j$. Since $A = (-h)g_ig_j$ is an atom of length $3$, then $A^{-1}UV$ is a product of atoms of length $2$. It follows that  $h=g_1+g_2\in \supp(U)$, a contradiction.  \qed

\smallskip
{\it Proof of \,{\bf A3}}.\, By Lemma \ref{3.4}.2, there is an $A \in \mathcal A (G)$ with $A \t g_i^{-k_i}(-g_i)^{k_i}U$ and  $|A| \in \{3,\mathsf D(G)-1\}$. By  {\bf A2} and Lemma \ref{3.4}.1, we obtain that $|A| \ne 3$. Thus $|A|=\mathsf D(G)$. If $A=g_i^{-k_i}(-g_i)^{k_i-1}U$, then $(2k_i-1)g_i=0$ which implies that $\mathsf v_{g_i}(U)+\mathsf v_{-g_i}(V)=2k_i>\ord(g_i)$. Lemma \ref{3.2}.2 implies a contradiction to $\mathsf D(G)>\exp(G)+1$.
Hence
$A=(-g_i)^{k_i}T$ with $T=g_i^{-k_i}h^{-1}U$, where $h\in\supp(U)$. Since $T^{-1}UV$ is an atom, we obtain that  $\big(k_1=1 $ and $h=g_1=-2k_ig_i \big)$ or $\big(k_2=1$ and  $ h=g_2=-2k_ig_i \big)$. \qed

\smallskip
{\it Proof of \,{\bf A4}}.\, Suppose that $k_1=1$. Then $Y=(-g_1-g_2)g_2^{k_2}\cdot \ldots \cdot g_s^{k_s}$ has length $\mathsf D(G)$. By our assumption on $\supp (U)$,  $Y$ has no zero-sum subsequence of length $2$. Therefore,  $Y$ has a zero-sum subsequence $T$ of length $|T|\in \{3, \mathsf D(G)-1,\mathsf D(G)\}$. We continue with the following three subcases depending on $|T|$. If $|T|=3$, say $T = (-g_1-g_2)g_ig_j$ with $i,j \in [2,s]$ not necessarily distinct, then $T$ and  $T' = g_1g_2(-g_i)(-g_j)$ are two atoms and $T'T \t UV$, a contradiction.  If $|T|=\mathsf D(G)$, then $Y=T$ is an atom which implies that $g_2=-2g_1$, $-g_1-g_2=g_1$, and hence $k_1=1$. If  $|T|=\mathsf D(G)-1$, then $k_2=1$, $T=Yg_2^{-1}$, and hence $2g_1+2g_2=0$.

If $k_2=1$, then the assertion follows along the same lines.  \qed

\medskip
The remainder of the proof will be divided into  the following three cases.

\medskip
\noindent CASE 1:\, $|\{i \in [3,s]  \mid \ord(g_i)>2\}|\ge 2$, say $\ord(g_3)>2$ and $\ord(g_4)>2$.

By {\bf A3}, we can assume that $\big(k_1=1$ and  $g_1=-2k_3g_3=-2k_4g_4\big)$ or $\big(g_1=-2k_3g_3, g_2=-2k_4g_4$ and $k_1=k_2=1\big)$.

Consider the sequence $W=g_1^{k_1}g_2^{k_2}(-g_3)^{k_3}(-g_4)^{k_4}g_5^{k_5}\cdot \ldots \cdot g_s^{k_s}$. Since $|W|=\mathsf D(G)$, there exists an atom    $Z \in \mathcal A (G)$ such that $Z \t W$ and $|Z| \in \{3, \mathsf D(G)-1, \mathsf D(G)\}$. We distinguish three subcases depending on $|Z|$.

 Suppose $|Z|=3$. By {\bf A2} and Lemma \ref{3.4}.1, $g_1=g_3+g_4$ or $g_2=g_3+g_4$. If $g_1=g_3+g_4$, then $ (-g_1-g_2)g_3g_4g_2$ and $g_1(-g_3)(-g_4)$ are two  atoms and divide $UV$, a contradiction. If  $g_2=g_3+g_4$, then $(-g_1-g_2)g_1g_3g_4$ and $g_2(-g_3)(-g_4)$ are two  atoms and divide $UV$, a contradiction.

Suppose $|Z|=\mathsf D(G)-1$. Since $UVZ^{-1}$ is a atom, we obtain that $Z=Wg_1^{-1}$ or $Z=Wg_2^{-1}$. Therefore, $-2k_3g_3-2k_4g_4=g_1$ or $g_2$. Our assumption infers that $-2k_3g_3-2k_4g_4=g_2$ and $g_1=-2k_3g_3=-2k_4g_4$ which implies that $2g_1=g_2$, a contradiction to Lemma \ref{3.4}.1.

Suppose $|Z|=\mathsf D(G)$. Then we obtain that $2k_3g_3+2k_4g_4=0$.  Our assumption infers that $k_1=1$, $g_1=-2k_3g_3=-2k_4g_4$, and hence  $\ord(g_1)=2$.
 Therefore, $4k_3g_3=0$, $g_1=2k_3g_3$, and hence $k_3\ge 2$ by Lemma \ref{3.4}.1. Since $g_1^{-1}U$ is a zero-sum sequence of length $\mathsf D(G)-1$, there exists a subsequence $W$ of $g_1^{-1}U$ such that $\sigma(W)=(2k_3+1)g_3$. If $g_3\t W$, then $g_1g_3^{-1}W$ is a proper zero-sum subsequence of $U$, a contradiction. Suppose $g_3\nmid W$. Then $g_1(-g_3)W$ is an atom and divides $UV$ which implies that $|g_1(-g_3)W|\in \{3,\mathsf D(G)-1, \mathsf D(G)\}$. Since $g_3(-g_3)\t UV(g_1(-g_3)W)^{-1}$, then $UV(g_1(-g_3)W)^{-1}$ is not an atom. Thus $|g_1(-g_3)W|=3$ which implies that $g_3\in \supp (U)\cap\supp (V)$,  a contradiction to {\bf A2}.

\medskip
\noindent CASE 2:\, $|\{i \in [3,s]  \mid \ord(g_i)>2\}|=1$, say $\ord(g_3)>2$.

By {\bf A3}, we may assume  that $k_1=1$ and $g_1=-2k_3g_3$.
By {\bf A4}, we obtain that $\big(g_2=-2g_1$ and $k_2=1 \big)$ or $\big(2g_1+2g_2=0$ and $k_2=1 \big)$. We continue with the following two subcases.

Suppose that $2g_1+2g_2=0$ and  $k_2=1$.
Since $\sigma(U)=g_1+g_2+k_3g_3+g_4+\ldots +g_s=0$, we obtain that $2k_3g_3=0=-g_1$, a contradiction.

Suppose that  $g_2=-2g_1=4k_3g_3$ and $k_2=1$. If $s=3$, then $G=\langle g_3 \rangle$ is cyclic, a contradiction. Hence $s\ge 4$ and $G=\langle g_1,\ldots ,g_{s-1}\rangle=\langle g_3,\ldots ,g_{s-1}\rangle$ which implies that $\mathsf r(G)=s-3$ and  $\exp(G)=\ord(g_3)$ is even.  Since  $\mathsf D(G)>\exp(G)+1$, by Lemma \ref{3.2}.2, we infer that
$\mathsf v_{g_3}(U)+\mathsf v_{-g_3}(V)=2k_3\le \ord(g_3)$.
Therefore, $g_1=-2k_3g_3\neq 0$ infers that $\ord(g_3)\ge 2k_3+2\ge4$.
Thus \[
\exp(G)+s-4\le \mathsf D(G)=|U|=k_3+s-1\le \frac{\ord(g_3)}{2}-1+s-1\le
\ord(g_3)+s-4
\]
which implies that $\ord(g_3)=4$,
a contradiction to  $g_2=4k_3g_3\neq 0$.

\medskip
\noindent CASE 3:\, $|\{i \in [3,s]  \mid \ord(g_i)>2\}|=0$.

 Since $\sigma (U)=k_1g_1+k_2g_2+g_3+\ldots+g_s=0$, we obtain that $2k_1g_1+2k_2g_2=0$.

Suppose that $g_1=g_2$. Then $\ord(g_1)=2(k_1+k_2)\ge 4$. It follows that
\[
\mathsf D(G)=k_1+k_2+s-2=\frac{\ord(g_1)}{2}+s-2<\ord(g_1)-1+s-2\le \mathsf D(G)\,,
\]
a contradiction.

Thus $g_1\neq g_2$. Consider the sequence $S=(-g_1-g_2)(-g_1)^{k_1-1}g_2^{k_2}g_3\cdot \ldots \cdot g_s$.  Since $|S|=\mathsf D(G)$, there exists an atom $Z \in \mathcal A (G)$ such that   $Z  \t S$ and $|Z| \in \{3, \mathsf D(G)-1, \mathsf D(G)\}$. We distinguish three subcases depending on $|Z|$.

Suppose that $|Z|=\mathsf D(G)$. Then $Z=S$ and  $g_2=-2k_1g_1=2k_2g_2$. Hence $(2k_2-1)g_2=0$ and $\ord(g_2)=2k_2-1$. If $s=3$, then $G=\langle g_1 \rangle$ is cyclic, a contradiction. Hence $s\ge 4$ and  $G=\langle g_1,\ldots ,g_{s-1}\rangle=\langle g_1,g_3,\ldots ,g_{s-1}\rangle$ which implies that  $\mathsf r(G)= s-2$ and $\ord(g_1)=\exp(G)$ is even.
Then $\ord(g_1)\ge 2\ord(g_2)\ge 6$.  Since  $\mathsf D(G)>\exp(G)+1$, by Lemma \ref{3.2}.2, we infer that
$\mathsf v_{g_1}(U)+\mathsf v_{-g_1}(V)=2k_1-1\le \ord(g_1)$ which implies that $2k_1\le \ord(g_1)$. Since $g_2=-2k_1g_1\ne 0$, we obtain that $2k_1\le \ord(g_1)-2$.
Thus
\[
\begin{aligned}
 \exp(G)+s-3&\le \mathsf D(G)=|U|=k_1+k_2+s-2\le \frac{\ord(g_1)}{2}-1+\lfloor\frac{\ord(g_1)+2}{4}\rfloor+s-2\\
 &\le \frac{\ord(g_1)}{2}-1+\frac{\ord(g_1)}{2}-1+s-2\le\ord(g_1)+s-4
 \end{aligned}
\] a contradiction.

Suppose that  $|Z|=\mathsf D(G)-1$. Then there exists an element $h\t S$ such that $Z=h^{-1}S$. Since $Z^{-1}UV$ is an atom, we obtain that $\big(h=-g_1-g_2\big)$ or $\big(h=g_2$ and $k_2=1\big)$. If $h=-g_1-g_2$,
then $Z^{-1}UV=(-g_1-g_2)g_1^{k_1}(-g_2)^{k_2-1}g_3\cdot \ldots \cdot g_s$ is an atom of length $\mathsf D(G)$ and hence the similar argument of the previous subcase $|Z|=\mathsf D(G)$ implies a contradiction. Suppose that $h=g_2$ and $k_2=1$. By {\bf A4}, we obtain that $k_1=k_2=1$. Thus $\exp(G)+s-3\le \mathsf D(G)=|U|=s$ which implies that $\exp(G)\le 3$, a contradiction.

Suppose that $|Z|=3$.  Then $UV Z^{-1}$ can only be a product of atoms of length $2$. But $\mathsf v_{g_1}(UV Z^{-1})=k_1 > \mathsf v_{-g_1}( UV)\ge \mathsf v_{-g_1}( UV Z^{-1})$, a contradiction.
\end{proof}

\medskip
\begin{proposition} \label{3.8}
Let $G$ be a finite abelian group with $\mathsf D (G) \ge  5$.
Then the following statements are equivalent{\rm \,:}
\begin{enumerate}
\item[(a)] $G$ is either an elementary $2$-group, or  isomorphic  to $C_2^{r-1}\oplus C_4$ for some  $r \ge 2$, or isomorphic to $C_2 \oplus C_{2n}$ for some $n \ge 2$.

\smallskip
\item[(b)] There exist  $U, V \in \mathcal A (G)$ with $\mathsf L (UV)= \{2, \mathsf D (G)-1\}$ and $|U|=|V|=\mathsf D (G)-1$.
\end{enumerate}
\end{proposition}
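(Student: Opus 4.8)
The plan is to prove the two implications separately, with the real work lying in $(b)\Rightarrow(a)$, where I would first collapse the problem to the rigid case $V=-U$ and then run a classification by $\exp(G)$ in the spirit of Propositions \ref{3.5}--\ref{3.7}.

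For $(a)\Rightarrow(b)$ I would exhibit explicit atoms. In each family there is a subgroup $H\le G$ with $\mathsf D(H)=\mathsf D(G)-1$: namely $H=C_2^{r-1}$ in $C_2^r$, $H=C_2^{r-2}\oplus C_4$ in $C_2^{r-1}\oplus C_4$, and $H=C_{2n}$ in $C_2\oplus C_{2n}$. I take $U$ to be a maximal atom of $H$, so $|U|=\mathsf D(H)=\mathsf D(G)-1$, and set $V=-U$; concretely $U=e_1\cdots e_{r-1}(e_1+\cdots+e_{r-1})$, then $U=e_1\cdots e_{r-2}\,g^{3}(e_0+g)$ with $e_0=e_1+\cdots+e_{r-2}$ and $\ord(g)=4$, and finally $U=e_2^{2n}$. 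Since $\supp\bigl(U(-U)\bigr)\subseteq H$, every factorization of $U(-U)$ stays in $\mathcal B(H)$, so it remains to check the finite fact that the only atoms dividing $U(-U)$ are the length-$2$ atoms $g(-g)$ and the two copies $U,-U$; this yields $\mathsf L\bigl(U(-U)\bigr)=\{2,\mathsf D(G)-1\}$.

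The crux of $(b)\Rightarrow(a)$ is a reduction step. Because $|UV|=2(\mathsf D(G)-1)$, a factorization of length $\mathsf D(G)-1$ can only consist of $\mathsf D(G)-1$ atoms of length $2$; hence $UV=\prod_i g_i(-g_i)$ is symmetric, i.e.\ $\mathsf v_g(UV)=\mathsf v_{-g}(UV)$ for all $g$. As $U,V$ are atoms of length $\ge 4$, neither contains a length-$2$ zero-sum subsequence, so for $\ord(g)>2$ the elements $g,-g$ cannot both occur in $U$ (nor both in $V$); feeding this into the symmetry relation forces $\mathsf v_g(V)=\mathsf v_{-g}(U)$ for every $g$ with $\ord(g)>2$. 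In a length-$2$ factorization the order-$2$ elements can only pair as $h\cdot h$, so each order-$2$ element occurs with even multiplicity in $UV$, which (both order-$2$ parts being squarefree) forces $U$ and $V$ to have the same order-$2$ part. Combining, $V=-U$, and $(b)$ becomes equivalent to the existence of an atom $U$ with $|U|=\mathsf D(G)-1$ and $\mathsf L\bigl(U(-U)\bigr)\cap[3,\mathsf D(G)-2]=\emptyset$. With this in hand I classify by $\exp(G)$: if $G$ is cyclic then a length-$(\mathsf D(G)-1)$ atom is $g^{\mathsf D(G)-2}(2g)$ for a generator $g$, so $U(-U)=g^{\mathsf D(G)-2}(-g)^{\mathsf D(G)-2}(2g)(-2g)$ admits the length-$(\mathsf D(G)-2)$ factorization $(-g)^2(2g)\cdot\bigl(g(-g)\bigr)^{\mathsf D(G)-4}\cdot g^2(-2g)$, contradicting $(b)$; if $\exp(G)=\mathsf D(G)-1$ the cited fact $\exp(G)+1=\mathsf D(G)\Rightarrow G\cong C_2\oplus C_n$ together with non-cyclicity gives $G\cong C_2\oplus C_{2n}$; and in the range $\exp(G)\le\mathsf D(G)-2$ Lemma \ref{3.2}(2) applies to every $g$ and, via $V=-U$, gives $2\,\mathsf v_g(U)\le\ord(g)$, with $\exp(G)=2$ the elementary $2$-group and $\exp(G)=4$ to be pinned down as $C_2^{r-1}\oplus C_4$.

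The main obstacle is exactly this final range: ruling out every $G$ with $\exp(G)\le\mathsf D(G)-2$ outside the families $C_2^r$ and $C_2^{r-1}\oplus C_4$. Since $(b)$ is an existence statement, excluding such a $G$ means showing that \emph{every} atom $U$ of length $\mathsf D(G)-1$ produces an intermediate-length factorization of $U(-U)$, and I expect this to require the full force of Lemmas \ref{3.2}--\ref{3.4}: using the multiplicity bound and Lemma \ref{3.4} to locate inside $U$ a short configuration (a relation $g=h_1+h_2$, or two independent order-$4$ elements) that yields an atom $A\mid U(-U)$ of length $3$ or $4$ whose complement $U(-U)A^{-1}$ still factors, so that the total factorization length falls into $[3,\mathsf D(G)-2]$. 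For $\exp(G)=4$ with $C_4\oplus C_4\le G$ this is where Lemma \ref{3.3}, which records $\daleth(C_4\oplus C_4)=5<\mathsf D(C_4\oplus C_4)-1$, defeats the two independent order-$4$ elements, while odd or large even exponents are killed by inequality chains of the form $\exp(G)+\cdots\le\mathsf D(G)=|U|<\exp(G)+\cdots$ already familiar from Propositions \ref{3.5}--\ref{3.7}.
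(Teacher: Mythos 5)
Your reduction of (b) to the case $V=-U$ is correct and matches what the paper asserts (with a bare ``Clearly''), and your handling of the cyclic case and of the case $\exp(G)=\mathsf D(G)-1$ is sound. However, there are two genuine gaps. The first is an outright error in (a)$\Rightarrow$(b): your witness for $C_2^{r-1}\oplus C_4$ fails for every $r\ge 3$. With $U=e_1\cdot\ldots\cdot e_{r-2}\,g^3(e_0+g)$, where $e_0=e_1+\ldots+e_{r-2}$ and $\ord(g)=4$, both $A_1=(e_0+g)(-g)\,e_1\cdot\ldots\cdot e_{r-2}$ and $A_2=(e_0+3g)\,g\,e_1\cdot\ldots\cdot e_{r-2}$ are atoms of length $r$ dividing $U(-U)$ (note that $-(e_0+g)=e_0+3g$), and $U(-U)=A_1\cdot A_2\cdot \bigl(g(-g)\bigr)^2$ is a factorization of length $4$, so $4\in \mathsf L\bigl(U(-U)\bigr)$ while $\mathsf D(G)-1=r+2\ge 5$. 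Thus the ``finite fact'' you postponed (that only $g(-g)$, $U$, $-U$ divide $U(-U)$) is false here, and the underlying recipe --- take any maximal-length atom of a subgroup $H$ with $\mathsf D(H)=\mathsf D(G)-1$ --- cannot be repaired for this family: if $\supp(U)$ lies in a proper subgroup, then $U$ is an atom of length $\mathsf D(H)$ in $H\cong C_2^{r-2}\oplus C_4$, it must contain an element of order $4$ and has $|\supp(U)|\ge 2$, so Lemma \ref{3.4}.2 yields an atom $W\t U(-U)$ with $|W|\in[3,\mathsf D(H)-1]$; then $W^{-1}U(-U)$ has length at least $\mathsf D(H)+1$, hence splits into at least two and at most $\mathsf D(H)-2$ atoms, producing a forbidden intermediate length. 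This is exactly why the paper's witness $e_1\cdot\ldots\cdot e_{r-1}e_r^2(e_0+e_r)$, with $e_0=e_1+\ldots+e_r$, has a different shape: its support generates all of $G$, and the extra element $e_0+e_r$ has order $2$, which is what kills all intermediate atoms.

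The second gap is that in (b)$\Rightarrow$(a) the entire range $\exp(G)\le \mathsf D(G)-2$ --- where essentially all of the difficulty lives --- remains a plan rather than a proof. What must be shown is that for every such $G$ other than $C_2^r$ and $C_2^{r-1}\oplus C_4$, \emph{every} atom $U$ of length $\mathsf D(G)-1$ forces a factorization of $U(-U)$ of length in $[3,\mathsf D(G)-2]$, and this is precisely the step you defer to ``locating a short configuration''. The paper's argument here is long and structurally specific: writing $U=g_1^{k_1}\cdot\ldots\cdot g_l^{k_l}$, it observes that each $S_i=g_i^{-k_i}(-g_i)^{k_i}U$ is either zero-sum free or an atom (the latter if and only if $2k_ig_i=0$), and splits on this dichotomy. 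When all $S_i$ are atoms, it proves that every $l-1$ of the $g_i$ are independent and that $k_i=\ord(g_i)/2$; only this independence makes the ``inequality chains'' meaningful and puts the hypotheses of Lemma \ref{3.3} (excluding $C_4\oplus C_4$) within reach. When some $S_1$ is zero-sum free, it proves the rigidity assertions that two distinct subsequences of $U$ (or of $S_1$) with equal sum must be disjoint and complementary, deduces $\Sigma(U)=G$ by a counting argument comparing $|\Sigma(U)|$ with $|\Sigma(S_1)|$, and only then reaches contradictions in two further subcases. None of these structural inputs (the dichotomy on $2k_ig_i$, the independence of the support, the count giving $\Sigma(U)=G$) appears in your outline, and without them neither the rank terms in your inequalities nor the appeal to Lemma \ref{3.3} can be launched. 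So as it stands, the hard direction is not proved.
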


\begin{proof}
(a)\, $\Rightarrow$\, (b) \ Suppose that $G$ is an elementary $2$-group, and let $(e_1, \ldots, e_r)$ be a basis of $G$ with $\ord (e_1)=\ldots = \ord (e_r)=2$. Then $\mathsf D (G)=r+1$, $U= e_1 \cdot \ldots \cdot e_{r-1}e_0 \in \mathcal A (G)$, where $e_0=e_1+ \ldots + e_{r-1}$, and $\mathsf L \big( U(-U) \big) = \{2,r\}$.

Suppose that $G$ is isomorphic to $C_2^{r-1}\oplus C_4$ for some  $r \ge 2$, and let  $(e_1, \ldots, e_r)$ be a basis of $G$ with $\ord (e_1)=\ldots = \ord (e_{r-1})=2$ and $\ord (e_r)=4$.  Then $\mathsf D (G)=r+3$, $U=e_1\cdot \ldots \cdot e_{r-1}e_r^2 (e_0+e_r) \in \mathcal A (G)$, where $e_0=e_1+ \ldots + e_r$, and $\mathsf L \big( U(-U) \big) = \{2, r+2\}$.

Suppose that $G$ is isomorphic to $C_2 \oplus C_{2n}$ for some $n \ge 2$, and let
$(e_1, e_2)$ be a basis of $G$ with $\ord (e_1)=2$ and  $\ord(e_2)=2n$. Then $\mathsf D (G)=2n+1$, $U = e_2^{2n} \in \mathcal A (G)$,  and $\mathsf L \big( U(-U) \big)=\{2, 2n \}$.

\smallskip
(b)\, $\Rightarrow$\, (a) \ Clearly, we have $V=-U$, and
for every zero-sum sequence $W$ with $W\t UV$ and  $W\neq UV$, it follows that $W$ is either an atom of length $\mathsf D(G)-1$ or $W$ is a product of atoms of length $2$.
We set  $U=g_1^{k_1}\cdot \ldots \cdot g_l^{k_l}$ with $l, k_1, \ldots, k_l \in \mathbb N$ and $g_1, \ldots, g_l \in G$ pairwise distinct.

Suppose that $l=1$. Then $k_1=\ord (g_1)=\mathsf D (G)-1$. Thus  $k_1$ is even and $G \cong C_2 \oplus C_{k_1}$.

Suppose that $l \ge 2$.
For each $i\in [1,l]$,  the sequence $S_i=g_i^{-k_i}(-g_i)^{k_i}U$ is either zero-sum free or an atom; clearly, $S_i$ is an atom if and only if  $2k_ig_i=0$.
So we can distinguish  two cases.

\smallskip
\noindent CASE 1:\, For each $i\in [1,l]$ we have $2k_ig_i=0$.

We claim that for any $i\in [1,l]$, the tuple $(g_1,\ldots ,g_{i-1}, g_{i+1}, \ldots , g_l)$ is independent. Clearly, it is sufficient to prove the claim for $i=l$. Assume to the contrary that $(g_1, \ldots, g_{l-1})$ is not independent.
Then there is an atom $W$ with $W\t g_1^{k_1}\cdot \ldots \cdot g_{l-1}^{k_{l-1}}\cdot (-g_1)^{k_1}\cdot \ldots \cdot (-g_{l-1})^{k_{l-1}} $ and $|W|>2$. Then $W$ is an atom of length $\mathsf D (G)-1$, and thus $W^{-1}UV$ is also an atom of length $\mathsf D (G)-1$. But  $g_l(-g_l)\t W^{-1}UV$, a contradiction.
After renumbering if necessary we may suppose that
\[
\ord(g_l)=\min \{\ord(g_1),\ldots , \ord(g_l)\} \quad \text{and} \quad \ord(g_1)=\min \{\ord(g_1),\ldots , \ord(g_{l-1})\} \,.
\]
Suppose that $l=2$. By our assumption that $\ord (g_1) \ge \ord (g_2)$, we obtain that
\[
\begin{aligned}
\exp (G)-1 & \le \mathsf D (G)-1=k_1+k_2=\frac{\ord (g_1)}{2}+\frac{\ord (g_2)}{2}=\ord (g_1)-\frac{\ord (g_1)-\ord(g_2)}{2}\\ & \le \exp (G)- \frac{\ord (g_1)-\ord(g_2)}{2} \le \exp (G) \,.
\end{aligned}
\]
If $\mathsf D (G)= \exp (G)$, then $\ord (g_1)=\exp (G)$, $\ord (g_1)-\ord (g_2)=2$, and hence $\ord (g_2)\t 2$ which implies $\ord (g_2)=2$ and $\ord (g_1)=4=\mathsf D (G)=\exp (G)$, a contradiction to $\mathsf D (G)\ge 5$.
If $\mathsf D (G)=\exp (G)+1$, then $G\cong C_2 \oplus C_{2n}$ for some $n \ge 2$.

Suppose that $l \ge 3$. Then
\[
\begin{aligned}
\mathsf D(G)-1 & = |U| = k_1+\ldots +k_l=\frac{\ord(g_1)}{2}+\ldots +\frac{\ord(g_{l})}{2}\le\ord(g_1)+\frac{\ord(g_2)}{2}+\ldots +\frac{\ord(g_{l-1})}{2} \\ & \le \ord(g_1)+\ldots  +\ord(g_{l-1})-(l-2)\le \mathsf D(G) \,.
\end{aligned}
\]

Suppose that equality holds at the second inequality sign. Then
\[
\ord(g_1)+\frac{\ord(g_2)}{2}+\ldots +\frac{\ord(g_{l-1})}{2}= \ord(g_1)+\ldots  +\ord(g_{l-1})-(l-2) \,.
\]
Since $\ord (g_i)/2 \le \ord (g_i)-1$ for all $i \in [2, l-1]$, it follows that then $\ord(g_i)=2$ for all $i \in [2,l-1]$. Because our assumption on the order of the elements, we infer that $\ord (g_1)=\ord (g_l)=2$, and hence $G$ is an elementary $2$-group.

Suppose that  inequality holds at the second inequality sign. Then we have
\[
\ord(g_1)+\frac{\ord(g_2)}{2}+\ldots +\frac{\ord(g_{l-1})}{2}= \ord(g_1)+\ldots  +\ord(g_{l-1})-(l-2)-1 \,.
\]
Since $\ord (g_i)/2 \le \ord (g_i)-1$ for all $i \in [2, l-1]$, there exists an $i \in [2, l-1]$, say $i=2$, such that $\ord (g_2)=4$ and $\ord (g_i)=2$ for all $i \in [3, l-1]$.
If $l=3$, then $G \cong C_4 \oplus C_4$ or $G \cong C_2  \oplus C_4$, but the first case is a contradiction to Lemma \ref{3.3}. If $l \ge 4$, then $G\cong C_2^{l-2}\oplus C_4$.

\smallskip
\noindent CASE 2:\, There exists an  $i\in [1,l]$ such that the sequence $S_i$ is zero-sum free, say $i=1$.

We start with a list of assertions.

\begin{enumerate}
\item[{\bf A1.}\,] Let $T, T' \in \mathcal F (G)$ be distinct such that $T \t U$, $T' \t U$, and $\sigma (T)=\sigma (T')$. Then $\supp(T)\cap \supp(T')=\emptyset$, $T T' = U$, and $2\sigma(T)=0$.

\item[{\bf A2.}\,] Let $T, T' \in \mathcal F (G)$ be distinct such that $T \t S_1$, $T' \t S_1$, and $\sigma (T)=\sigma (T')$. Then $\supp(T)\cap \supp(T')=\emptyset$, $T T' = S_1$, and $2\sigma(T)= - 2k_1g_1$.

\item[{\bf A3.}\,] $\Sigma (U) = G$.

\item[{\bf A4.}\,] If $i \in [1,l]$ with $\ord (g_i)>2$, then $\ord (g_i)>2k_i$.

\end{enumerate}

\smallskip
{\it Proof of \,{\bf A1}}.\, Obviously,  $T(-T')$ has sum zero and $T(-T')\t UV$  but $T(-T')\neq UV$. So $T(-T')$ must be an atom of length $\mathsf D(G)-1$ which implies that $\supp(T)\cap \supp(T')=\emptyset$, $T T' = U$, and $2\sigma(T)=0$. \qed

\smallskip
{\it Proof of \,{\bf A2}}.\, Obviously,  $T(-T')$ has sum zero, $T(-T') \t UV$, and $T(-T')\neq UV$. So $T (-T')$ must be an atom of length $\mathsf D(G)-1$ which implies that $\supp(T)\cap \supp(T')=\emptyset$, $T T' = S_1$, and $2\sigma(T)=-2k_1g_1$. \qed

\smallskip
{\it Proof of \,{\bf A3}}.\,
We will show that  $|\Sigma (U)| = |G|$. Clearly, we have
\[
\begin{aligned}
|\Sigma (U)| & = |\{ \sigma (T) \mid 1 \ne T \in \mathcal F (G), \ T \t U \}|  \\
& = |\{g \in G \mid \text{there exist} \ 1 \ne T \ \text{with} \ T\t U \ \text{and} \ \sigma (T)=g \}| \end{aligned}
\]
Since $U = g_1^{k_1} \cdot \ldots \cdot g_l^{k_l}$, we have
\[
|\{ T \in \mathcal F (G) \mid 1 \ne T, \ T \t U \}| = (k_1+1)\cdot \ldots \cdot(k_l+1)-1 \,.
\]
By {\bf A1},  there are at most two distinct subsequences of $U$ with given sum  $g \in G$.
Therefore we obtain
\[
\begin{aligned}
|\Sigma (U)| & = |\{ \sigma (T) \mid 1 \ne T \in \mathcal F (G), \ T \t U \}|  \\
& = |\{ T \in \mathcal F (G) \mid 1 \ne T, \ T \t U \}| - \\
 & \qquad \frac{1}{2}|\{ T \in \mathcal F (G) \mid \ T\t U \ \text{and there is a divisor $T'$ of $U$ with $T\ne T'$ and $\sigma (T)=\sigma (T')$} \}| \\
&=(k_1+1)\cdot \ldots \cdot(k_l+1)-1-|\{T \in \mathcal F (G)  \ \mid \  g_1\nmid T, \ T=\prod\limits_{g\in \supp(T)}g^{\mathsf v_{g}(U)}, \ \text{ and }  \ord(\sigma(T))=2  \}| \,.
\end{aligned}
\]
Next we study $|\Sigma (S_1)|$. Since $S_1$ is zero-sum free of length $|S_1|=\mathsf D (G)-1$, it follows that $\Sigma (S_1)=G\setminus \{0\}$.
Using {\bf A2} for the second equality sign we obtain that
\[
\begin{aligned}
|G|-1 & = |\Sigma (S_1)| = (k_1+1)\cdot \ldots \cdot(k_l+1)-1\\
& \qquad -|\{T \in \mathcal F (G) \ \mid \   T=(-g_1)^{k_1} \prod\limits_{g\in \supp(T)\setminus\{-g_1\}}g^{\mathsf v_{g}(U)}\text{ and } 2\sigma(T)=-2k_1g_1  \}|\\
&=  (k_1+1)\cdot \ldots \cdot(k_l+1)-1\\
&\qquad -\big(|\{T\in \mathcal F(G) \ \mid \  g_1\nmid T \text{ and } T=\prod\limits_{g\in \supp(T)}g^{\mathsf v_{g}(U)},\ \ord(\sigma(T))=2  \}|+|\{T\in \mathcal F(G)\mid T=1\}|\big)\\
&=  |\Sigma (U)|-1 \,,
\end{aligned}
\]
and hence $\Sigma (U)=G$. \qed

\smallskip
{\it Proof of \,{\bf A4}}.\, Since $S_1$ is zero-sum free, it follows  that $\ord (g_1)>2k_1$.
Now let $i \in [2,l]$ be given and   assume to the contrary that $\ord (g_i)\le 2k_i$. Recall that $\ord (g_i) > \mathsf v_{g_i}(U)=k_i$.
We set $U = g_i^{k_i}U'$. Then $W'=(-g_i)^{\ord (g_i)-k_i}U'$ has sum zero and divides $UV$. Thus $W'$ is an atom of length $\mathsf D (G)-1 =|W'|=|U|$ and hence $\ord (g_i)=2k_i$. Since $\Sigma (S_1)=G\setminus \{0\}$, $S_1$ has a subsequence $T$ such that $\sigma (T)=(k_i+1)g_i$. If $g_i \nmid T$, then $Tg_i^{k_i-1}$ is a proper zero-sum subsequence of $S_1$, a contradiction. If $g_i \t T$, then $\sigma (g_i^{-1}T)=k_ig_i = \sigma (g_i^{k_i})$. By {\bf A2}, it follows that $0= 2k_ig_i= 2 \sigma (g_i^{-1}T )= -2k_1g_1 \ne 0$, a contradiction. \qed

\medskip
Now we distinguish two subcases.

\smallskip
\noindent
CASE 2.1:\, $|\{ i \in [1,l] \mid \ord(g_i)>2\}|\ge 3$, say $\ord(g_1)>2$, $\ord(g_2)>2$, and  $\ord(g_3)>2$.

 We start with the following assertion.

 \begin{enumerate}
 \item[{\bf A5.}\,]  There is a subsequence $W$ of $U$ with $\sigma (W)= g_1-g_2$ such that  $W=g_2^{k_2}W'$ and for any $h\t W'$, $\ord (h)=2$.
 \end{enumerate}

\smallskip
{\it Proof of \,{\bf A5}}.\,
By {\bf A3},  there exists some $W \in \mathcal F (G)$ such that $W\t U$ and $\sigma(W)=g_1-g_2$. We claim that $g_1\nmid W$ but $g_2^{k_2}\t W$. Assume to the contrary that $g_1\t W$. Then $\sigma(g_1^{-1}W)=-g_2=\sigma (g_2^{-1}U)$. By {\bf A1}, this implies that $2g_2=0$, a contradiction.
Assume to the contrary that $g_2^{k_2}\nmid W$.  Then $g_2W\t U$ and $\sigma (g_2W)=g_1=\sigma(g_1)$.  By {\bf A1}, this implies that $2g_1=0$, a contradiction.
Thus  $W=g_2^{k_2}W'$ with $W'\t g_3^{k_3}\cdot \ldots \cdot g_l^{k_l}$.

 Let $i \in [3, l]$ such that  $g_i\t W'$. We only need to  show that $\ord(g_i)=2$.
Assume to the contrary that $\ord (g_i)>2$. We set $X=Ug_ig_2^{-1}$, and then $|X|=|U|=\mathsf D(G)-1$. Suppose that $X$ has a zero-sum subsequence $T$. Then $g_i^{k_i+1}\t T$ and by {\bf A4} we obtain that   $\ord(g_i)>2k_i$.   Therefore, $g_i^{-k_i}T\t U$ and $g_i^{-k_i}T \neq g_i^{-k_i}U$ but  $\sigma( g_i^{-k_i}T)=-k_ig_i=\sigma (g_i^{-k_i}U)$ which implies that $2k_ig_i=0$ by {\bf A1}, a contradiction.
Thus $X$ is zero-sum free, and $|X|=\mathsf D (G)-1$ which imply that $\Sigma (X)=G\setminus \{0\}$.
Therefore, $X$ has a subsequence $T$ such that $\sigma (T)=g_1-g_2$.

 Suppose that  $g_i^{k_i+1}\nmid T$. Then $T\t U$, and by definition of $X$ we have  $g_2^{k_2}\nmid T$ which implies that  $g_2T \t U$. Since  $\sigma (g_2T)=g_1= \sigma (g_1)$, we obtain that  $2g_1=0$ by {\bf A1} , a contradiction.

 Suppose that  $g_i^{k_i+1}\t T$.  Then $g_i^{-1}T\t U$.
Since $\sigma (g_i^{-1}T)=g_1-g_2-g_i=\sigma(g_i^{-1}W)$ and $g_i^{-1}T\neq g_i^{-1}W$, we obtain that  $\supp(g_i^{-1}T)\cap \supp(g_i^{-1}W)=\emptyset$ and $U=g_i^{-1}T\cdot g_i^{-1}W$ by {\bf A1}. Set $T_1=g_i^{-1}T$,  then $g_1^{k_1}g_i^{k_i}\t T_1$ and $g_2\nmid T_1$. It follows that $\sigma(g_1^{-1}g_2T_1)=-g_i=\sigma (g_i^{-1}U)$ which implies that $2g_i=0$ by {\bf A1}, a contradiction.        \qed

\smallskip
Repeating the argument of {\bf A5}, we can find another subsequence $W_1$ of  $U$ with $\sigma(W_1)=g_2-g_1$ such that  $W_1=g_1^{k_1}W_1'$ and for any $h\t W_1'$, $\ord (h)=2$. Set $Y=WW_1(\prod\limits_{h\in \supp(W')\cap \supp(W_1')}h^2)^{-1}$, then $Y$ is a zero-sum subsequence of $U$.  Since $\ord(g_3)>2$, we have $g_3\nmid W$ and $g_3\nmid W_1$ which imply that $g_3\nmid Y$. It follows that $Y$ is a proper zero-sum subsequence of $U$, a contradiction.

\medskip
\noindent
CASE 2.2:\,  $|\{i \in [1,l]  \mid \ord(g_i)>2\}|\le 2$.

Since $k_1g_1+ \ldots + k_lg_l=\sigma (U)=0$ and $\ord (k_1g_1)> 2$, it follows that $|\{i \in [1,l]  \mid \ord(g_i)>2\}| = 2$, say, $\ord(g_1)>2$ and $\ord(g_2)>2$. Then {\bf A4} implies that $\ord(g_1)>2k_1$ and $\ord(g_2)>2k_2$.

Suppose that  $l=2$. Then \[
\mathsf D(G)-1=k_1+k_2\le \frac{\ord(g_1)-1}{2}+\frac{\ord(g_2)-1}{2}\le \exp(G)-1\] which implies that $G$ is a cyclic group and $k_1=k_2$. Since any minimal zero-sum sequence of length $|G|-1$ over a cyclic group has the form $g^{|G|-2}(2g)$ for some generating element $g \in G$, it follows that $1=k_2=k_1$, and hence $|G|=3$, a contradiction to $\mathsf D (G)\ge 5$.

Suppose  $l\ge 3$. Then $\exp(G)$ is even.  We may assume that $\ord(g_1)\ge \ord(g_2)$. Since $(g_3,\ldots,g_l)$ is independent, we have $\mathsf r(G)\ge l-2$. Therefore, \[
\begin{aligned}\exp(G)+l-4&\le \mathsf D(G)-1=|U|=k_1+k_2+l-2\\
&\le \lfloor\frac{\ord(g_1)-1}{2}\rfloor+\lfloor\frac{\ord(g_2)-1}{2}\rfloor+l-2
\le 2\lfloor\frac{\ord(g_1)-1}{2}\rfloor+l-2\\
 &\le \ord(g_1)+l-3
 \le \exp(G)+l-3\,.
\end{aligned}
\]
Since $\ord(g_1)\t \exp(G)$, we have that $\ord(g_1)=\exp(G)$ is even.
Thus $2\lfloor\frac{\ord(g_1)-1}{2}\rfloor=\ord(g_1)-2$ which implies that   $k_1+k_2=\lfloor\frac{\ord(g_1)-1}{2}\rfloor+\lfloor\frac{\ord(g_2)-1}{2}\rfloor= 2\lfloor\frac{\ord(g_1)-1}{2}\rfloor$. Then $\ord(g_1)=\ord(g_2)=2k_1+2=2k_2+2$. Since $\sigma(U)=k_1g_1+k_2g_2+g_3+\ldots+g_l=0$, we have  $2k_1g_1+2k_2g_2=0$ which implies that $2g_1+2g_2=0$. If $k_1=k_2\ge 2$, then $g_1^2g_2^2$ is an atom and divides $UV$, a contradiction. Thus $k_1=k_2=1$, and hence $\ord(g_1)=\ord(g_2)=4$. By {\bf A3}, there exists a subsequence  $W$ of $U$ such that $\sigma (W)=2g_1$. If $g_1\t W$, then $g_1\nmid g_1^{-1}W$ and $\sigma (g_1^{-1}W)=g_1=\sigma(g_1)$ which implies that $2g_1=0$ by {\bf A1}, a contradiction.
If $g_1\nmid W$, then $g_1\nmid U(g_1W)^{-1}$ and $\sigma (U(g_1W)^{-1})=g_1=\sigma(g_1)$ which implies that $2g_1=0$ by {\bf A1}, a contradiction.
\end{proof}

\medskip
\section{Proof of the Main Results} \label{4}
\medskip

In this final section we provide the proofs of all results presented in the Introduction (Theorem \ref{1.1}, Corollary \ref{1.2}, and Corollary \ref{1.3}).

\medskip
\begin{proof}[Proof of Theorem \ref{1.1} and of Corollary \ref{1.2}]
Let $H$ be a Krull monoid with finite class group $G$ such that $|G| \ge 3$ and  each class contains a prime divisor. Recall that the monoid of zero-sum sequences $\mathcal B (G)$ is a Krull monoid with class group isomorphic to $G$ and each class contains a prime divisor. By Proposition \ref{2.1}, $\daleth (H) = \daleth (G)$ and $\mathsf c (H) = \mathsf c  (G)$. Thus it is sufficient to prove Theorem \ref{1.1} for the Krull monoid $\mathcal B (G)$.

Let $\mathcal O$ be a holomorphy ring in a global field $K$,  and $R$ a classical maximal $\mathcal O$-order in a central simple algebra $A$ over $K$ such that every stably free left $R$-ideal is free. Then the monoid $R^{\bullet}$ is a non-commutative Krull monoid (\cite{Ge13a}), and all invariants under consideration of $R^{\bullet}$ coincide with the respective invariants of a commutative Krull monoid whose class group is isomorphic to a ray class group of $\mathcal  O$. These (highly non-trivial) transfer results are established in \cite{Sm13a, Ba-Sm15}, and  are summarized in \cite[Theorems 7.6 and 7.12]{Ba-Sm15}.
Therefore, both for Theorem \ref{1.1} and for Corollary \ref{1.2}, it is sufficient to prove the equivalence of the statements for a monoid of zero-sum sequences.

Let $G$ be a finite abelian group with $|G| \ge 3$, and recall the inequalities
\[
\daleth (G)  \le \mathsf c (G) \le \mathsf D (G) \,.
\]
(c)\, $\Rightarrow$\, (a) \ Suppose that $G$ is isomorphic either to $C_2^{r-1} \oplus C_4$ for some $r \ge 2$ or  to $C_2 \oplus C_{2n}$ for some $n \ge 2$.
Then Theorem {\bf A} (in the Introduction)  shows that $\mathsf c (G) \le \mathsf D (G)-1$.
Since $\mathsf D (C_2^{r-1}\oplus C_4)= r+3$ and  $\mathsf D (C_2 \oplus C_{2n})=2n+1$, Lemma \ref{3.1}.2 implies that $\mathsf c (G) \ge \daleth (G) \ge \mathsf D (G)-1$.

(a)\, $\Rightarrow$\, (b) \ Suppose that $\mathsf c (G) = \mathsf D (G)-1$. By Theorem {\bf A},  $G$ is neither cyclic nor an elementary $2$-group which implies that $\mathsf D(G)\ge 5$.
By Lemma \ref{3.1}.1, we have
\[
\mathsf D (G)-1 = \mathsf c (G)   \le  \max \Big\{ \Big\lfloor\frac{1}{2} \mathsf D(G)+1 \Big\rfloor,\, \daleth (G) \Big\}   \le \mathsf D(G) \,.
\]
Thus,  if $\daleth (G)<\mathsf D (G)-1$, then $\Big\lfloor\frac{1}{2} \mathsf D(G)+1 \Big\rfloor \ge \mathsf D(G)-1$ which implies that $\mathsf D(G)\le 4$, a contradiction.

(b)\, $\Rightarrow$\, (c) Suppose that $\daleth (G) = \mathsf D (G)-1$. Again by Theorem {\bf A}, $G$ is neither cyclic nor an elementary $2$-group which implies that $\mathsf D (G)\ge 5$. Then there exist $U, V \in \mathcal A (G)$ such that $\min \big( \mathsf L (UV) \setminus \{2\} \big) = \mathsf D (G)-1$. Obviously, there are the following four cases (up to symmetry):
\begin{itemize}
\item $\mathsf L (UV)= \{2, \mathsf D (G)-1, \mathsf D (G)\}$.

\item $\mathsf L (UV)=\{2, \mathsf D (G)-1\}$ and $|U|=|V|=\mathsf D (G)$.

\item $\mathsf L (UV)=\{2, \mathsf D (G)-1\}$ and $|U|-1=|V|=\mathsf D (G)$.

\item $\mathsf L (UV)=\{2, \mathsf D (G)-1\}$ and $|U|=|V|=\mathsf D (G)-1$.
\end{itemize}
These cases are handled in the Propositions \ref{3.5} to \ref{3.8}, and they imply that $G$ is isomorphic either to $C_2^{r-1} \oplus C_4$ for some $r \ge 2$ or  to $C_2 \oplus C_{2n}$ for some $n \ge 2$.
\end{proof}

\medskip
\begin{proof}[Proof of  Corollary \ref{1.3}]
Let $G$ and $G'$ be abelian groups such that $\mathcal L (G) = \mathcal L (G')$. Then
\[
\daleth (G) = \sup \{ \min ( L \setminus \{2\}) \mid 2 \in L \in \mathcal L (G)\} = \daleth (G') \,.
\]
If $G$ is finite, then $\daleth (G) \le \mathsf c (G) \le \mathsf D (G) < \infty$. If $G$ is infinite, then, by the Theorem of Kainrath (see \cite{Ka99a} or \cite[Section 7.3]{Ge-HK06a}), every finite set $L \subset \mathbb N_{\ge 2}$ lies in $\mathcal L (G)$, which implies that $\daleth (G)=\infty$.

For $k \in \mathbb N$, we define the refined elasticities
\[
\rho_k (G) = \sup \{ \sup L \mid k \in L \in \mathcal L (G) \} \,,
\]
and observe  that $\rho_k (G) = \rho_k (G')$. This implies that $k \mathsf D (G) = \rho_{2k} (G)=\rho_{2k} (G') = k \mathsf D (G')$ (see \cite[Section 6.3]{Ge-HK06a}) for each $k \in \mathbb N$, and hence $\mathsf D (G) = \mathsf D (G')$.

Now suppose that $G' \in \{C_2^{r-1} \oplus C_4, C_2 \oplus C_{2n} \}$ where $r, n \ge 2$. Then Theorem \ref{1.1} implies that $\daleth (G') = \mathsf D (G')-1$. Since $\mathcal L (G) = \mathcal L (G')$, it follows that $G$ is finite and that
\[
\daleth (G) = \daleth (G') = \mathsf D (G')-1 = \mathsf D (G)-1 \,,
\]
whence Theorem \ref{1.1} implies that $G \in \{C_2^{r-1} \oplus C_4, C_2 \oplus C_{2n}\}$ with $n, r \ge 2$. Suppose now that $n, r \ge 3$. Clearly, Condition (b) in Proposition \ref{3.5} is equivalent to
\begin{itemize}
\item[(b')] $\{2, \mathsf D (G)-1, \mathsf D (G)\} \in \mathcal L (G)$.
\end{itemize}
Thus Proposition \ref{3.5} implies in particular that $\mathcal L ( C_2 \oplus C_{2n} ) \ne \mathcal L ( C_2^{r-1} \oplus C_4 )$, and thus the assertion of Corollary \ref{1.3} follows.
\end{proof}

\providecommand{\bysame}{\leavevmode\hbox to3em{\hrulefill}\thinspace}
\providecommand{\MR}{\relax\ifhmode\unskip\space\fi MR }
\providecommand{\MRhref}[2]{%
  \href{http://www.ams.org/mathscinet-getitem?mr=#1}{#2}
}
\providecommand{\href}[2]{#2}

\end{document}